\providecommand{\U}[1]{\protect \rule{.1in}{.1in}}
\newtheorem{theorem}{Theorem}[section]
\newtheorem{corollary}[theorem]{Corollary}
\newtheorem{definition}[theorem]{Definition}
\newtheorem{lemma}[theorem]{Lemma}
\newtheorem{proposition}[theorem]{Proposition}
\newtheorem{remark}[theorem]{Remark}
\newenvironment{proof}[1][Proof]{\noindent \textbf{#1.} }{\  \rule{0.5em}{0.5em}}
\begin{document}

	\title{Multi-dimensional BSDEs driven by $G$-Brownian motion and related system of fully nonlinear PDEs}
	\author{Guomin Liu \thanks{School of Mathematical Sciences,
			Fudan University, Shanghai, 200433, PR China; and Zhongtai Securities Institute for Financial Studies, Shandong University, Jinan,
			250100, PR China. gmliusdu@163.com. Research supported by the National Natural Science Foundation of
			China (No. 11601282 and No.11671231) and the Natural Science Foundation of Shandong Province (No. ZR2016AQ10). }}
		\date{}
	\maketitle
	
	\begin{abstract}
		In this paper, we study the  well-posedness of multi-dimensional backward stochastic differential equations driven by $G$-Brownian motion ($G$-BSDEs). The existence and uniqueness of solutions are obtained via a contraction argument for $Y$ component and a backward iteration of local solutions. Furthermore, we show that, the solution of
		multi-dimensional  $G$-BSDE in a Markovian framework provides a probabilistic formula for the viscosity
		solution of a system of nonlinear parabolic partial differential equations.
	\end{abstract}
	
	\textbf{Key words}: $G$-expectation, $G$-Brownian motion, Multi-dimensional BSDEs, Fully
	nonlinear PDEs.
	
	\textbf{MSC-classification}: 60H10, 60H30.
	\section{Introduction}

	The nonlinear backward stochastic differential equation driven by Brownian motion (BSDE)  was  introduced by Pardoux and Peng \cite{PP}.
	A solution to a BSDE is a couple of processes $(Y,Z)$ satisfying
	\[Y_t=\xi+\int_t^T f(s,Y_s,Z_s)ds-\int_t^T Z_sdB_s, \ 0\leq t\leq T,\]
	where  $B_t$ is a $d$-dimensional standard Brownian motion and the function $f$ is
	called the generator of BSDE. Since then, it was shown that the BSDE theory
	is a powerful tool for the study of partial differential equations,  stochastic control, theoretical economics
	and mathematical finance, see \cite{BH,CHMP,EPQ,P93}.
	
	Recently, motivated by financial problems with Knightian uncertainty, Peng \cite{P3,P4,P10}
	has established systematically  a framework of time-consistent sublinear expectation,
	called $G$-expectation.
	Under this framework, a new type of nonlinear Brownian motion $(B_t)_{t\geq 0}$ called $G$-Brownian motion and the corresponding  stochastic calculus were established.
	Furthermore, Hu, Ji, Peng and Song \cite{HJPS} obtained the existence and uniqueness of
	solutions to the following one-dimensional  (i.e., $Y$ is one-dimensional)
	backward stochastic differential equations driven by $G$-Brownian motion ($G$-BSDEs):
	\begin{equation}
	Y_t=\xi+\int_t^Tf(s,Y_s,Z_s)ds+\int_t^T g_{ij}(s,Y_s,Z_s)d\langle B^i,B^j\rangle_s-\int_t^T Z_s dB_s-(K_T-K_t),
	\end{equation}
	where the solution of this equation consists of a triple of processes $(Y,Z,K)$.  Note that, compared with the classical case, $G$-BSDE has an extra non-increasing $G$-martingale term $K$.
	In their
	accompanying paper \cite{HJPS1}, the corresponding comparison theorem, Feymann-Kac formula were established.
	For other developments, one can refer to, for instance, \cite{HW2,LPH,Song3}. See also \cite{STZ} for a formulation highly related to $G$-BSDEs, called 2BSDEs.
	
	Multi-dimensional $G$-BSDE refers to the case that $Y$ in the solution is multi-dimensional. Since $G$-expectation is a nonlinear expectation, the linear combination of $G$-martingales is not a $G$-martingale anymore. This leads to the difficulty that the method for one-dimensional $G$-BSDE in \cite{HJPS} can not be applied to solving multi-dimensional $G$-BSDEs.

	We denote by $x^l$ the $l$-th column (component) of $x \in\mathbb{R}^n$, for $l=1,\cdots,n$. In the paper, we consider the following multi-dimensional $G$-BSDEs:
	\begin{equation}\label{eq0.1}
	Y_{t}^{l} =\xi^{l}+\int^{T}_{t} f^{l}(s,Y_{s},Z_{s}^{l})ds+\int^{T}_{t}
	g_{ij}^{l}(s,Y_{s},Z_{s}^{l})d\langle B^{i},B^{j}\rangle_{s}-\int^{T}_{t}
	Z_{s}^{l} dB_{s}-(K_{T}^{l}-K_{t}^{l}),\ 1\leq l\leq n.
	\end{equation}
	Here the generators $f,g_{ij}$ are assumed to be diagonal in $z$, i.e., the $z$ parts of the $l$-th components $f^l,g^l_{ij}$ only depend on $z^l$.
	The existence and uniqueness theorem (see Theorem \ref{my16}) of the local solution to (\ref{eq0.1}) is proved by the method of Picard iteration for the $Y$ term, in which we have applied the well-posedness theorem of one-dimensional $G$-BSDEs and  the tools of a priori estimates. The global solution (see Theorem \ref{my17}) is then obtained by a backward iteration of the local ones.
	The Picard iteration for local solutions is also used to establish the comparison theorem (see Theorem \ref{Myth4.1}) for multi-dimensional $G$-BSDE (\ref{eq0.1}) through a limit argument.

	 In the last part of our paper, we study the link with the viscosity
	 solution of a system of fully nonlinear parabolic partial differential equations  in a Markovian framework. Indeed, \cite{HJPS1} showed that the one-dimensional $G$-BSDE is connected to fully nonlinear parabolic partial differential equation. On the other hand, the solution
	 to a standard multi-dimensional BSDE provides the viscosity solution to a system of semilinear parabolic partial differential
	 equations, see, e.g., Pardoux and Peng \cite{PP2} Pardoux, Pradeilles and Rao \cite{PPR}, Peng \cite{Peng92}. Inspired by these two directions, we show that, our
	 multi-dimensional  $G$-BSDE can provide a probabilistic representation for the
	 solution of a system of fully nonlinear parabolic partial differential equations (see Theorem \ref{Myth4.3}). For this purpose, we derive some useful a priori estimates of continuous dependence of solutions on terminal conditions  and generators for  $G$-BSDEs (\ref{eq0.1}), and also study the regularity of solutions of $G$-BSDEs (\ref{eq0.1}) coupled with stochastic differential equations driven by $G$-Brownian motion ($G$-SDEs).
	
	This paper is organized as follows. In Section 2, we recall some basic notions
	of  $G$-Brownian motion and one-dimensional $G$-BSDEs. Section 3 is devoted to
	the well-posedness of multi-dimensional $G$-BSDEs and the  corresponding comparison theorem. Finally, in Section 4, we give its relationship with the system of PDEs.

	\section{Preliminaries}
	
	The main purpose of this section is to recall some basic results about
	$G$-expectation and one-dimensional $G$-BSDEs, which are needed in the sequel.
	The readers may refer to \cite{DHP11,HJPS,HJPS1,P10} for more details. For
	convenience, every element $x \in\mathbb{R}^{n}$ is identified to a column
	vector with $l$-th component $x^{l}$ and the corresponding Euclidian norm and
	Euclidian scalar product is denoted by $|\cdot|$ and $\langle\cdot
	,\cdot\rangle$, respectively.
	
	\subsection{$G$-expectation}
	
	Let $\Omega_{T}:=C_{0}([0,T];\mathbb{R}^{d})$ be the space of all
	$\mathbb{R}^{d} $-valued continuous paths $(\omega_{t})_{t\geq0}$ starting
	from origin,  equipped with the supremum norm.  Denote by $\mathcal{B}%
	(\Omega_{T})$ the Borel $\sigma$-algebra of $\Omega_{T}$ and $B_{t}%
	(\omega):=\omega_{t}$ the canonical mapping.  For each $t\in\lbrack0,T]$, we
	set
	\[
	L_{ip}(\Omega_{T}):=\{ \varphi(B_{t_{1}},\ldots,B_{t_{k}}):k\in\mathbb{N}
	,t_{1},\ldots,t_{k}\in\lbrack0,T],\varphi\in C_{b.Lip}(\mathbb{R} ^{k\times d
	})\},
	\]
	where $C_{b.Lip}(\mathbb{R}^{k\times d})$ denotes the space of  bounded and
	Lipschitz functions on $\mathbb{R}^{k\times d}$.
	
	Let $\mathbb{S}(d)$ be the space of all $d\times d$ symmetric matrices. For
	each given monotonic and sublinear function $G:\mathbb{S} (d)\rightarrow
	\mathbb{R}$, Peng \cite{P10} constructed a dynamic coherent nonlinear
	$G$-expectation space $(\Omega_{T}, L_{ip}(\Omega_{T}),\hat{\mathbb{E}},
	(\hat{\mathbb{E}}_{t})_{t\geq0})$.  In this paper, we  always assume that $G$
	is non-degenerate, i.e., there exists a constant  $\underline{\sigma}^{2}>0$
	such that
	\[
	G(A)-G(B)\geq\frac{1}{2}\underline{\sigma}^{2}\text{tr}[A-B], \text{ for
	}A\geq B.
	\]

	For each  $p\geq1$, the completion of $L_{ip}(\Omega_{T})$ under the norm
	$||X||_{L_{G}^{p}}:=(\mathbb{\hat{E}}[|X|^{p}])^{1/p}$ is denoted by
	$L_{G}^{p}(\Omega_{T})$. The $G$-expectation $\mathbb{\hat{E}}[\cdot]$ and
	conditional $G$-expectation $\mathbb{\hat{E}}_{t}[\cdot]$ can be  extended
	continuously to the completion $L_{G}^{p}(\Omega_{T})$. The corresponding
	canonical process $B$  is called $G$-Brownian motion.
	
	Indeed, the $G$-expectation can be regarded as an upper expectation on
	$L_{G}^{1}(\Omega_{T})$.
	
	\begin{theorem}
		[\cite{DHP11, HP09}] \label{the1.1} There exists a weakly compact set
		$\mathcal{P}$ of probability  measures on $(\Omega_{T},\mathcal{B}(\Omega
		_{T}))$ such that
		\[
		\hat{\mathbb{E}}[\xi]=\sup_{P\in\mathcal{P}}E_{P}[\xi]=\max_{P\in\mathcal{P}%
		}E_{P}[\xi],\ \ \ \ \text{for all}\ \xi\in{L}_{G}^{1}{(\Omega_{T})}.
		\]
		
	\end{theorem}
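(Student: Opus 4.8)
The plan is to produce $\mathcal{P}$ by the standard two-stage route: first represent the sublinear functional $\hat{\mathbb{E}}$ as a supremum of linear expectations, and then identify those with probability measures and verify weak compactness. First I would record that, by its construction, $\hat{\mathbb{E}}$ is a sublinear expectation on $L_{ip}(\Omega_{T})$: it is monotone, preserves constants, is positively homogeneous and sub-additive. Sub-additivity and positive homogeneity make $\hat{\mathbb{E}}$ a sublinear functional on the vector space $L_{ip}(\Omega_{T})$, so for each fixed $X_{0}$ the Hahn--Banach theorem furnishes a linear functional $\ell\leq\hat{\mathbb{E}}$ with $\ell(X_{0})=\hat{\mathbb{E}}[X_{0}]$. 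Collecting every linear functional dominated by $\hat{\mathbb{E}}$ into a family $\mathcal{L}$ gives $\hat{\mathbb{E}}[X]=\sup_{\ell\in\mathcal{L}}\ell(X)$ for all $X\in L_{ip}(\Omega_{T})$; monotonicity of $\hat{\mathbb{E}}$ together with $\hat{\mathbb{E}}[\pm1]=\pm1$ forces each $\ell$ to be positive and to satisfy $\ell(1)=1$.

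Next I would identify each $\ell\in\mathcal{L}$ with a probability measure. Restricted to the bounded continuous functions $C_{b}(\Omega_{T})$ (which contains $L_{ip}(\Omega_{T})$), each $\ell$ is a positive, unit-mass linear functional, so the Daniell--Stone representation, equivalently Riesz representation on the Polish space $\Omega_{T}$, yields a unique Borel probability measure $P_{\ell}$ with $\ell(X)=E_{P_{\ell}}[X]$; the required continuity-from-above of $\ell$ along decreasing sequences in $C_{b}(\Omega_{T})$ is inherited from the corresponding regularity of $\hat{\mathbb{E}}$, since $0\leq\ell(X_{n})\leq\hat{\mathbb{E}}[X_{n}]\to 0$ whenever $X_{n}\downarrow0$. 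Setting $\mathcal{P}$ to be the weak closure of $\{P_{\ell}:\ell\in\mathcal{L}\}$ then gives $\hat{\mathbb{E}}[X]=\sup_{P\in\mathcal{P}}E_{P}[X]$ on $L_{ip}(\Omega_{T})$, and by continuity of both sides in the $L_{G}^{1}$-norm the identity extends to all $\xi\in L_{G}^{1}(\Omega_{T})$.

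The hard part will be the weak compactness of $\mathcal{P}$, which I would obtain through tightness in the path space $C_{0}([0,T];\mathbb{R}^{d})$. The key input is a uniform moment estimate: because the monotone sublinear $G$ is also bounded above by a multiple of the trace on nonnegative matrices, the increments of the canonical process satisfy $\sup_{P\in\mathcal{P}}E_{P}[|B_{t}-B_{s}|^{2m}]\leq C_{m}|t-s|^{m}$ for every integer $m$, with a constant independent of $P$. Feeding these bounds into the Kolmogorov--Chentsov continuity criterion produces a single tightness modulus valid for the whole family, so by Prokhorov's theorem $\{P_{\ell}:\ell\in\mathcal{L}\}$ is relatively weakly compact; since $\mathcal{P}$ is its weak closure, $\mathcal{P}$ is weakly compact.

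Finally I would upgrade the supremum to a maximum. For $\xi\in C_{b}(\Omega_{T})$ the map $P\mapsto E_{P}[\xi]$ is weakly continuous, so it attains its supremum on the weakly compact set $\mathcal{P}$. For general $\xi\in L_{G}^{1}(\Omega_{T})$ I would approximate by $\xi_{n}\in L_{ip}(\Omega_{T})$ with $\|\xi-\xi_{n}\|_{L_{G}^{1}}\to0$; the uniform moment estimates give uniform integrability of the family under $\mathcal{P}$, so $P\mapsto E_{P}[\xi]$ is weakly upper semicontinuous and the maximum is again attained. The main obstacle throughout is the tightness step, since it is the only place where the specific non-degenerate and bounded structure of $G$ enters, and where the passage from an abstract supremum to an honest, weakly compact family of measures is actually secured.
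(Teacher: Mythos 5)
The paper does not prove this statement: Theorem \ref{the1.1} is quoted as a known result with attribution to \cite{DHP11, HP09}, so there is no internal proof to compare against; your sketch should instead be measured against the arguments in those references, whose overall architecture (sublinear functional $\to$ family of linear functionals $\to$ probability measures $\to$ tightness) it indeed follows.

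Measured that way, there is one genuine gap, and it sits at the exact point where the whole difficulty of the theorem lives. In the Daniell--Stone step you write that the continuity from above of each $\ell$ is ``inherited from the corresponding regularity of $\hat{\mathbb{E}}$, since $0\leq\ell(X_{n})\leq\hat{\mathbb{E}}[X_{n}]\to 0$ whenever $X_{n}\downarrow0$.'' But $\Omega_{T}=C_{0}([0,T];\mathbb{R}^{d})$ is not compact, so for a general monotone, constant-preserving sublinear functional on $C_{b}(\Omega_{T})$ the convergence $\hat{\mathbb{E}}[X_{n}]\downarrow 0$ for $X_{n}\downarrow 0$ is simply false in general (take the supremum over \emph{all} Borel probability measures); this downward continuity is essentially equivalent to the tightness you are trying to prove, not a free consequence of the definition of $\hat{\mathbb{E}}$. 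Your own tightness argument cannot repair this as written, because it is stated as a bound $\sup_{P\in\mathcal{P}}E_{P}[|B_{t}-B_{s}|^{2m}]\leq C_{m}|t-s|^{m}$ over the family $\mathcal{P}$ that has not yet been constructed at that stage --- the argument is circular. The fix, which is what \cite{DHP11} actually does, is to break the circle by exhibiting a concrete family first: one represents $G(A)=\frac{1}{2}\sup_{\gamma\in\Theta}\mathrm{tr}[\gamma\gamma^{T}A]$ with $\Theta$ bounded, takes $\mathcal{P}$ to be the closure of the laws of $\int_{0}^{\cdot}\theta_{s}\,dW_{s}$ under Wiener measure with $\theta$ adapted and $\Theta$-valued, verifies by direct computation (via the PDE definition of the $G$-normal distribution, or via BDG under each such law) that the upper expectation of this family coincides with $\hat{\mathbb{E}}$ on $L_{ip}(\Omega_{T})$, and only then reads off tightness from the uniform moment bounds, which are uniform precisely because $\Theta$ is bounded. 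Once $\mathcal{P}$ is in hand, the downward continuity of $\hat{\mathbb{E}}$ on $C_{b}$ follows, rather than preceding the construction. Your extension to $L_{G}^{1}$ and the upgrade from supremum to maximum are fine (for the latter it is cleaner to note that $P\mapsto E_{P}[\xi]$ is a uniform limit over $\mathcal{P}$ of the weakly continuous maps $P\mapsto E_{P}[\xi_{n}]$, hence weakly continuous on the compact $\mathcal{P}$).
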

	
	For this $\mathcal{P}$, we define capacity
	\[
	c(A):=\sup_{P\in\mathcal{P}}P(A),\ A\in\mathcal{B}(\Omega_{T}).
	\]
	A set $A\subset\Omega_{T}$ is polar if $c(A)=0$. A property holds
	\textquotedblleft quasi-surely\textquotedblright\ (q.s. for short) if it
	holds  outside a polar set. In the following, we do not distinguish between
	two random  variables $X$ and $Y$ if $X=Y$ q.s..
	
	For $p\geq1$, we set
	\[
	\mathbb{L}^{p}(\Omega_{T}):=\{X\in\mathcal{B}(\Omega_{T}):\sup_{P\in
		\mathcal{P}}E_{P}[|X|^{p}]<\infty\}.
	\]
	It is important to note that $L_{G}^{p}(\Omega_{T})\subset\mathbb{L}
	^{p}(\Omega_{T})$. We extend $G$-expectation $\mathbb{\hat{E}}$ to
	$\mathbb{L}^{p}(\Omega_{T})$, and still denote it by $\mathbb{\hat{E}}$, by
	setting
	\[
	\mathbb{\hat{E}}[X]:=\sup_{P\in\mathcal{P}}E_{P}[X], \ \text{for each}
	\ X\in\mathbb{L}^{1}(\Omega_{T}).
	\]
	For $p\geq1$, $\mathbb{L}^{p}(\Omega_{T})$ is a Banach space under the norm
	$(\mathbb{\hat{E}}[|\cdot|^{p}])^{1/p}$.
	
	In the sequel, we shall also use the $G$-evaluation introduced in
	\cite{song1}. For $\xi\in L_{ip}(\Omega_{T})$, we define its $G$-evaluation by
	$\mathcal{E}[\xi]:=\mathbb{\hat{E}} [\sup_{t\in\lbrack0,T]}\mathbb{\hat{E}%
	}_{t}[\xi]]$.  For $p\geq1$ and $\xi\in L_{ip}(\Omega_{T})$, define $\Vert
	\xi\Vert_{p, \mathcal{E}}=\{ \mathcal{E}[|\xi|^{p}]\}^{1/p}$ and denote by
	$L_{\mathcal{E} }^{p}(\Omega_{T})$ the completion of $L_{ip}(\Omega_{T})$
	under the norm $ \Vert\cdot\Vert_{p,\mathcal{E}}$.
	
	\begin{theorem}
		\label{the2.10} For any $\alpha\geq1$ and $\delta>0$, we  have $L_{G}%
		^{\alpha+\delta}(\Omega_{T})\subset L_{\mathcal{E} }^{\alpha}(\Omega_{T})$.
		More precisely, for all $\xi\in L_{ip}(\Omega_{T})$, we  have
		\begin{equation}
		\label{Myeq2.2}\mathbb{\hat{E}}[\sup_{t\in\lbrack0,T]}\mathbb{\hat{E}}%
		_{t}[|\xi|^{\alpha}]]\leq C(\mathbb{\hat{E}}[|\xi|^{\alpha+\delta}%
		])^{\alpha/(\alpha+\delta)},
		\end{equation}
		for some constant $C$ depending only on $\alpha$ and $\delta$.
	\end{theorem}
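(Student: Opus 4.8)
The plan is to reduce the asserted inclusion to the quantitative estimate (\ref{Myeq2.2}) on the dense subspace $L_{ip}(\Omega_T)$, and then to prove that estimate by a Doob-type maximal inequality adapted to the sublinear expectation. For the reduction, note that both $L_G^{\alpha+\delta}(\Omega_T)$ and $L_{\mathcal E}^{\alpha}(\Omega_T)$ are completions of $L_{ip}(\Omega_T)$; once (\ref{Myeq2.2}) is known, applying it to differences $\xi_n-\xi_m$ of an approximating sequence shows that every $\|\cdot\|_{L_G^{\alpha+\delta}}$-Cauchy sequence is $\|\cdot\|_{\alpha,\mathcal E}$-Cauchy, so the identity map extends to a continuous inclusion. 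It therefore suffices to fix $\xi\in L_{ip}(\Omega_T)$, set $\eta:=|\xi|^{\alpha}\ge0$ (bounded) and $\beta:=(\alpha+\delta)/\alpha>1$, and prove $\hat{\mathbb E}[M^*]\le C_\beta(\hat{\mathbb E}[\eta^{\beta}])^{1/\beta}$, where $M_t:=\hat{\mathbb E}_t[\eta]$ and $M^*:=\sup_{t\in[0,T]}M_t$. The boundedness of $\eta$ guarantees that $M^*$ is well defined and $\hat{\mathbb E}[M^*]<\infty$, which is needed below.

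First I would establish a strong-type maximal inequality, namely
\[ \lambda\, c(\{M^*>\lambda\})\le\hat{\mathbb E}\big[\eta\,\mathbf 1_{\{M^*>\lambda\}}\big],\qquad\lambda>0. \]
To avoid continuous-time optional sampling I would work on a finite grid, replace $M^*$ by $\max_j M_{s_j}$, and let $\tau$ be the finitely valued first index at which $M$ exceeds $\lambda$. Writing $A_\lambda:=\{\max_j M_{s_j}>\lambda\}\in\mathcal F_\tau$ and using the one-step tower property of the conditional $G$-expectation together with the fact that the nonnegative $\mathcal F_\tau$-measurable indicator $\mathbf 1_{A_\lambda}$ factors out, one obtains $\hat{\mathbb E}[\eta\,\mathbf 1_{A_\lambda}]=\hat{\mathbb E}[\mathbf 1_{A_\lambda}\hat{\mathbb E}_\tau[\eta]]\ge\lambda\,\hat{\mathbb E}[\mathbf 1_{A_\lambda}]=\lambda\,c(A_\lambda)$, since $\hat{\mathbb E}_\tau[\eta]=M_\tau\ge\lambda$ on $A_\lambda$. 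Refining the grid and using the path regularity of $t\mapsto\hat{\mathbb E}_t[\eta]$ then yields the displayed inequality for $M^*$.

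Next I would upgrade this to the claimed bound. By Theorem \ref{the1.1} and $P(\cdot)\le c(\cdot)$ one has $\hat{\mathbb E}[M^*]=\sup_{P\in\mathcal P}\int_0^\infty P(M^*>\lambda)\,d\lambda\le\int_0^\infty c(M^*>\lambda)\,d\lambda$. Splitting at a level $N$, bounding $\int_0^N$ by $N$, and on $[N,\infty)$ inserting the strong-type inequality followed by the Hölder and Markov inequalities for $\hat{\mathbb E}$ (with conjugate exponent $q:=\beta/(\beta-1)$), I obtain, with $m:=\hat{\mathbb E}[M^*]$,
\[ m\le N+\int_N^\infty\frac1\lambda\,(\hat{\mathbb E}[\eta^{\beta}])^{1/\beta}\,c(M^*>\lambda)^{1/q}\,d\lambda\le N+q\,(\hat{\mathbb E}[\eta^{\beta}])^{1/\beta}\,m^{1/q}\,N^{-1/q}. \]
Optimizing in $N$, equivalently applying Young's inequality to absorb the term $m^{1/q}$, which is legitimate precisely because $m<\infty$ and $1/q<1$, gives $m\le C_\beta(\hat{\mathbb E}[\eta^{\beta}])^{1/\beta}$, that is (\ref{Myeq2.2}); here $\beta>1$, i.e.\ $\delta>0$, is exactly what makes the tail integral converge and the absorption possible.

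I expect the main obstacle to be the strong-type maximal inequality rather than the final absorption: the tower property and the factoring-out of $\mathcal F_\tau$-measurable factors must be justified for the conditional $G$-expectation, and one must control the passage from the discrete grid to $\sup_{t\in[0,T]}$, which relies on the quasi-continuity of the $G$-martingale $t\mapsto\hat{\mathbb E}_t[\eta]$. A secondary point is to keep all estimates on $L_{ip}(\Omega_T)$, where $\eta$ is bounded and $m<\infty$, so that the self-improving inequality can be closed before extending to $L_G^{\alpha+\delta}(\Omega_T)$ by density.
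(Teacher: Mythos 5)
There is a genuine gap, and it sits exactly where you yourself flagged the risk: the ``strong-type'' maximal inequality $\lambda\,c(\{M^*>\lambda\})\leq\hat{\mathbb{E}}\big[\eta\,\mathbf{1}_{\{M^*>\lambda\}}\big]$ is not established by your argument, and no proof of it is available in the $G$-framework. The identity $\hat{\mathbb{E}}[\eta\,\mathbf{1}_{A_\lambda}]=\hat{\mathbb{E}}[\mathbf{1}_{A_\lambda}\hat{\mathbb{E}}_\tau[\eta]]$ is a tower property \emph{across a stopping time}, and this is precisely what sublinearity destroys. Since $\hat{\mathbb{E}}_\tau$ must be defined piecewise, decompose $A_\lambda=\bigsqcup_j\{\tau=s_j\}$: modulo quasi-continuity issues with the indicators, one does have $\hat{\mathbb{E}}[\mathbf{1}_{\{\tau=s_j\}}\hat{\mathbb{E}}_{s_j}[\eta]]=\hat{\mathbb{E}}[\mathbf{1}_{\{\tau=s_j\}}\eta]$ term by term (fixed deterministic time, nonnegative $\mathcal{B}(\Omega_{s_j})$-measurable factor), but $\hat{\mathbb{E}}$ is only subadditive, so summing gives $\hat{\mathbb{E}}\big[\sum_j\mathbf{1}_{\{\tau=s_j\}}\hat{\mathbb{E}}_{s_j}[\eta]\big]\leq\sum_j\hat{\mathbb{E}}[\mathbf{1}_{\{\tau=s_j\}}\eta]$, and the right-hand side dominates $\hat{\mathbb{E}}[\eta\,\mathbf{1}_{A_\lambda}]$ from \emph{above}, by a factor that can grow with the number of grid points. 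Equivalently, under a fixed $P\in\mathcal{P}$ one has $\hat{\mathbb{E}}_{s_j}[\eta]\geq E_P[\eta\,|\,\mathcal{F}_{s_j}]$ rather than equality, so $E_P[\mathbf{1}_{A_\lambda}M_\tau]\geq E_P[\mathbf{1}_{A_\lambda}\eta]$ --- the wrong direction to close the Doob argument. If your weak-type bound were true, a Doob maximal inequality with no loss of integrability would follow; it is exactly the unavailability of such a bound that forces the loss of $\delta$ and the extra nonhomogeneous remainder in the known estimate. Your reduction to $L_{ip}(\Omega_T)$, the tail-integral computation with H\"older and Markov, and the absorption step (legitimate because $m<\infty$ for bounded $\eta$) are all fine; the argument fails at its first and central inequality.

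For comparison, the paper does not prove (\ref{Myeq2.2}) from scratch at all: it quotes Song's inequality $\hat{\mathbb{E}}[\sup_{t}\hat{\mathbb{E}}_{t}[|\xi|^{\alpha}]]\leq C\{(\hat{\mathbb{E}}[|\xi|^{\alpha+\delta}])^{\alpha/(\alpha+\delta)}+\hat{\mathbb{E}}[|\xi|^{\alpha+\delta}]\}$ from \cite{song1} --- whose proof is a substantially more involved analysis of $G$-martingales, not a stopping-time argument --- and then removes the nonhomogeneous term by scaling: replacing $\xi$ by $\lambda\xi$, the left side and the first right-hand term scale like $\lambda^{\alpha}$ while the last term scales like $\lambda^{\alpha+\delta}$, so dividing by $\lambda^{\alpha}$ and letting $\lambda\to0$ yields (\ref{Myeq2.2}). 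The missing ingredient in your proposal is a correct substitute for the stopped tower property, and supplying one is essentially the content of \cite{song1}.
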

	
	\begin{remark}
		\upshape{
			In \cite{song1}, the author gives the following inequality for the estimate of $G$-evaluation: there exists some constant $C$ depending only on $\alpha$ and $\delta$ such that
			\begin{equation*}
			\mathbb{\hat{E}}[\sup_{t\in \lbrack0,T]}\mathbb{\hat{E}}_{t}[|\xi|^{\alpha
			}]]\leq C\{(\mathbb{\hat{E}}[|\xi|^{\alpha+\delta}])^{\alpha
				/(\alpha+\delta)}+\mathbb{\hat{E}}[|\xi|^{\alpha+\delta}]\}.
			\end{equation*}
			The refree points out that the above inequality can be refined to be (\ref{Myeq2.2}), which is more similar to the classical case. Indeed,
			multiplying $\xi$ by $\lambda>0$ and making $\lambda$ tend to $0$, then the nonhomogeneous term is eliminated and we get the stronger inequality (\ref{Myeq2.2}).
		}
	\end{remark}
	
	\subsection{One-dimensional $G$-BSDEs}
	
	First, we shall introduce the corresponding stochastic calculus with respect
	to $G$-Brownian motion.
	
	\begin{definition}
		\label{def2.6} Let $M_{G}^{0}(0,T)$ be the collection of processes in the
		following form: for a given partition $\{t_{0},\cdot\cdot\cdot,t_{N}\}$ of
		$[0,T]$,
		\[
		\eta_{t}(\omega)=\sum_{j=0}^{N-1}\xi_{j}(\omega)I_{[t_{j},t_{j+1})}(t),
		\]
		where $\xi_{i}\in L_{ip}(\Omega_{t_{i}})$, $i=0,1,2,\cdot\cdot\cdot,N-1$. For
		$p\geq1$ and $\eta\in M_{G}^{0}(0,T)$, let $\Vert\eta\Vert_{H_{G}^{p}}=\{
		\mathbb{\hat{E}}[(\int_{0}^{T}|\eta_{s}|^{2}ds)^{p/2}]\}^{1/p}$, $\Vert
		\eta\Vert_{M_{G}^{p}}=\{ \mathbb{\hat{E}}[\int_{0}^{T}|\eta_{s}|^{p}
		ds]\}^{1/p}$ and denote by $H_{G}^{p}(0,T)$, $M_{G}^{p}(0,T)$ the completions
		of $M_{G}^{0}(0,T)$ under the norms $\Vert\cdot\Vert_{H_{G}^{p}}$, $\Vert
		\cdot\Vert_{M_{G}^{p}}$ respectively.
	\end{definition}
	
	For each $1\leq i, j \leq d$, we denote by $\langle B^{i}, B^{j}\rangle$ the
	mutual variation process.  Then for two processes $\eta\in H_{G}^{p}(0,T)$ and
	$\xi\in M_{G}^{p}(0,T)$ with $p\geq1$,  the $G$-It\^{o} integrals $\int%
	^{\cdot}_{0}\eta_{s}dB^{i}_{s}$ and $\int^{\cdot}_{0}\xi_{s}d\langle
	B^{i},B^{j}\rangle_{s}$ are well defined, see \cite{HJPS}.
	
	Now we introduce some basic results of one-dimensional $G$-BSDEs. Consider the
	following type of $G$-BSDEs (in this paper we always use Einstein's summation
	convention)
	\begin{equation}
	\label{eq1.1}Y_{t}=\xi+\int_{t}^{T} f(s,Y_{s},Z_{s})ds+\int_{t}^{T}
	g_{ij}(s,Y_{s},Z_{s})d\langle B^{i},B^{j}\rangle_{s}-\int_{t}^{T} Z_{s}
	dB_{s}-(K_{T}-K_{t}),
	\end{equation}
	where
	\[
	f(t,\omega,y,z),g_{ij}(t,\omega,y,z):[0,T]\times\Omega_{T}\times
	\mathbb{R}\times\mathbb{R}^{d}\rightarrow\mathbb{R},
	\]
	satisfy the following properties:
	
	\begin{description}
		\item[($A_{1}$)] there exists some $\beta>1$ such that for any $y,z$,
		$f(\cdot,\cdot,y,z),g_{ij}(\cdot,\cdot,y,z)\in M_{G}^{\beta}(0,T)$,
		
		\item[($A_{2}$)] there exists some $L>0$ such that
		\[
		|f(t,y,z)-f(t,y^{\prime},z^{\prime})|+\sum_{i,j=1}^{d}|g_{ij}(t,y,z)-g_{ij}%
		(t,y^{\prime},z^{\prime})|\leq L(|y-y^{\prime}|+|z-z^{\prime}|).
		\]
		
	\end{description}
	
	Let $S_{G}^{0}(0,T)=\{h(t,B_{t_{1}\wedge t}, \ldots,B_{t_{n}\wedge t}%
	):t_{1},\ldots,t_{n}\in[0,T],h\in C_{b,Lip}(\mathbb{R}^{n+1})\}$. For $p\geq1$
	and $\eta\in S_{G}^{0}(0,T)$, set $\|\eta\|_{S_{G}^{p}}=\{\hat{\mathbb{E}%
	}[\sup_{t\in[0,T]}|\eta_{t}|^{p}]\}^{1/p}$. Denote by $S_{G}^{p}(0,T)$ the
	completion of $S_{G}^{0}(0,T)$ under the norm $\|\cdot\|_{S_{G}^{p}}$.  For
	simplicity, we denote by $\mathcal{A}_{G}^{p}(0,T)$ the collection of
	processes $K$ such that $K$ is a non-increasing $G$-martingale with $K_{0}=0$
	and $K_{T}\in L_{G}^{p}(\Omega_{T})$.
	
	In the rest of this paper, for simplicity we denote by $M^{p}_{G}
	(0,T;\mathbb{R}^{n})$ the set of $n$-dimensional stochastic process
	$X=(X^{1},\cdots,X^{n})$ such that $X^{l}\in M^{p}_{G}(0,T),l\leq n$, and we
	also  define ${S}_{G}^{\alpha}(0,T;\mathbb{R}^{n})$, $H^{\alpha}%
	_{G}(0,T;\mathbb{R} ^{n\times d})$, $\mathcal{A}^{\alpha}_{G}(0,T;\mathbb{R}%
	^{n})$ and $L^{\beta}_{G}(\Omega_{T};\mathbb{R}^{n})$ similarly.
	
	\begin{theorem}
		[Theorem 4.1 of \cite{HJPS}]\label{wellposeness of G-BSDE}  Assume that
		$\xi\in L_{G}^{\beta}(\Omega_{T})$ and $f,g_{ij}$ satisfy $(A_{1})$-$(A_{2})$
		for some $\beta>1$. Then for any $1<\alpha<\beta$, equation \eqref{eq1.1} has
		a unique solution $(Y,Z,K)\in{S}_{G}^{\alpha}(0,T)\times H^{\alpha}%
		_{G}(0,T;\mathbb{R}^{d})\times\mathcal{A}_{G}^{\alpha}(0,T)$.
	\end{theorem}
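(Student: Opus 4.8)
The plan is to follow the by-now standard two-stage strategy for $G$-BSDEs: first solve the equation when the generators do not depend on $(Y,Z)$, and then reach the general Lipschitz case by a fixed-point argument in the $Y$ variable. I would open by establishing uniqueness together with the basic a priori estimates, since these drive everything else. Given a solution $(Y,Z,K)$ of \eqref{eq1.1}, applying $G$-It\^o's formula to $|Y_t|^2$ and then interpolating (or working directly with $|Y_t|^\alpha$) and using the sublinearity of $\hat{\mathbb{E}}$ yields a bound on $\|Y\|_{S_G^\alpha}$ in terms of $\xi$ and $f(\cdot,0,0),g_{ij}(\cdot,0,0)$; a second estimate, obtained by integrating $|Z|^2$ against $d\langle B\rangle$ and invoking the non-degeneracy $\underline{\sigma}^2>0$, controls $\|Z\|_{H_G^\alpha}$; and $K_T$ is then read off from the equation and estimated in $L_G^\alpha$. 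Applying these bounds to the difference of two solutions with identical data forces the solutions to coincide, giving uniqueness. The $G$-evaluation inequality of Theorem \ref{the2.10} is the crucial device throughout, as it converts conditional-expectation bounds into genuine $S_G^\alpha$ and $H_G^\alpha$ norms.

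For existence I would first treat the decoupled equation, i.e.\ with generators $f(s)$ and $g_{ij}(s)$ that are prescribed elements of $M_G^\beta(0,T)$ independent of $(y,z)$. Here one sets
\[
Y_t:=\hat{\mathbb{E}}_t\Big[\xi+\int_t^T f(s)\,ds+\int_t^T g_{ij}(s)\,d\langle B^i,B^j\rangle_s\Big],
\]
checks that the process obtained by re-adding the already-integrated drift is a $G$-martingale, and applies the martingale representation / nonlinear Doob--Meyer decomposition for $G$-martingales to extract $Z\in H_G^\alpha(0,T;\mathbb{R}^d)$ and a non-increasing $G$-martingale $K\in\mathcal{A}_G^\alpha(0,T)$ with $K_0=0$. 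Rearranging reproduces \eqref{eq1.1}, so the decoupled equation is solvable, and the a priori estimates show that its solution map depends continuously on the frozen generators.

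The general case then follows by contraction. I would fix $(y,z)\in S_G^\alpha(0,T)\times H_G^\alpha(0,T;\mathbb{R}^d)$, freeze the generators to $f(s,y_s,z_s)$ and $g_{ij}(s,y_s,z_s)$, solve the decoupled equation just constructed, and denote the resulting $Y$-component by $\Gamma(y,z)$. Combining the a priori difference estimate with the Lipschitz hypothesis $(A_2)$ shows that on a short enough interval $[T-\delta,T]$ the map $\Gamma$ is a strict contraction in an equivalent weighted norm, hence has a unique fixed point, which is a local solution. Since the length $\delta$ depends only on $L$, $\alpha$ and the non-degeneracy constant and not on the terminal data, patching these local solutions backward across $[0,T]$ yields the global solution in the required product space.

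I expect the genuine obstacle to lie in the decoupled step, namely producing the non-increasing $G$-martingale $K$ with the correct integrability while simultaneously keeping $Z\in H_G^\alpha$. Because $\hat{\mathbb{E}}$ is only sublinear, $K$ is not the compensator of a classical linear Doob--Meyer decomposition; its existence rests on the representation theorem for $G$-martingales, and controlling $\|K_T\|_{L_G^\alpha}$ forces a small loss of integrability, which is exactly why the statement requires $\alpha<\beta$ rather than $\alpha=\beta$ and why the $G$-evaluation estimate \eqref{Myeq2.2} is indispensable. Once these nonlinear estimates are secured, the contraction and backward patching are routine.
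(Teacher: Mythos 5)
First, note that the paper does not actually prove this statement: it is imported verbatim as Theorem 4.1 of \cite{HJPS} and used as a black box (the whole multi-dimensional construction in Section 3 leans on it). The comparison is therefore really with the proof in \cite{HJPS}, which runs quite differently from your sketch: there, existence is first obtained for cylinder terminal data $\xi=\varphi(B_{t_1},\dots,B_{t_N})$ and generators piecewise constant in time, by solving the associated fully nonlinear PDEs backward on each subinterval and reading off $(Y,Z,K)$ explicitly from the PDE solution (with $K$ of the form $\int k_{ij}\,d\langle B^i,B^j\rangle_s-\int 2G([k_{ij}])\,ds$, exactly as in the proof of Theorem \ref{Myth4.3} here); the general case then follows by approximation via the a priori estimates. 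Your decoupled step is a legitimate alternative --- for generators independent of $(y,z)$ one may indeed set $Y_t=\hat{\mathbb{E}}_t[\cdot]$ and invoke Song's $G$-martingale decomposition \cite{song1} to produce $Z\in H_G^{\alpha}$ and $K\in\mathcal{A}_G^{\alpha}$ with precisely the $\alpha<\beta$ loss you identify --- but that decomposition theorem is of comparable depth to the result being proved, so this step outsources the difficulty rather than removing it.

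The genuine gap is in the step you call routine: the Picard iteration in the pair $(y,z)$ does not close in the $G$-framework. The contraction in the $Y$-component is fine (it is essentially Lemma \ref{myq9} of this paper), but the only available control on the $Z$-component is Proposition \ref{pro2.9}, i.e.\ estimate \eqref{my102}, which bounds $\hat{\mathbb{E}}[(\int_0^T|\hat{Z}_s|^2ds)^{\alpha/2}]$ by $C\{\Vert\hat{Y}\Vert_{S^{\alpha}_G}^{\alpha}+\Vert\hat{Y}\Vert_{S^{\alpha}_G}^{\alpha/2}(\cdots)\}$. The half power is not an artifact: applying $G$-It\^o's formula to $|\hat{Y}|^2$ produces the cross term $\int\hat{Y}\,d\hat{K}$, where $\hat{K}=K^{(1)}-K^{(2)}$ is a difference of non-increasing $G$-martingales with no sign and no martingale property, and the only bound is $\sup|\hat{Y}|\cdot(|K_T^{(1)}|+|K_T^{(2)}|)$ followed by Young's inequality. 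Hence an input perturbation of size $\varepsilon$ yields an output $\hat{Z}$ controlled only by $O(\sqrt{\varepsilon})$, and no shortening of the interval or weighting of the norm makes the map a strict contraction on $S_G^{\alpha}\times H_G^{\alpha}$. This is exactly why \cite{HJPS} never iterate in $z$, and why the present paper's multi-dimensional scheme iterates only in the $Y$-variable while the $z$-dependence is always absorbed into the one-dimensional solver. To repair your argument you would either have to reproduce the PDE-approximation route of \cite{HJPS} or establish a genuinely new Lipschitz estimate for $\hat{Z}$ in terms of the data.
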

	
	\begin{theorem}
		[Comparison theorem (\cite{HJPS1})]\label{the1.5}  Let $(Y_{t}^{(l)}%
		,Z_{t}^{(l)},K_{t}^{(l)})_{t\leq T}$, $l=1,2$, be the solutions of the
		following $G$-BSDEs:
		\[
		Y^{(l)}_{t}=\xi^{(l)}+\int_{t}^{T} f^{(l)}(s,Y^{(l)}_{s},Z^{(l)}_{s}%
		)ds+\int_{t}^{T} g^{(l)}_{ij}(s,Y^{(l)}_{s},Z^{(l)}_{s})d\langle B^{i}%
		,B^{j}\rangle_{s}-\int_{t}^{T} Z^{(l)}_{s} dB_{s}-(K^{(l)}_{T}-K^{(l)}_{t}),
		\]
		where $f^{(l)},\ g^{(l)}_{ij}$ satisfy $(A_{1})$-$(A_{2})$ and $\xi^{(l)}\in
		L_{G}^{\beta}(\Omega_{T})$ with $\beta>1$. If $\xi^{(1)}\geq\xi^{(2)}$,
		$f^{(1)}\geq f^{(2)}$, $[g^{(1)}_{ij}]_{i,j=1}^{d}\geq[g^{(2)}_{ij}%
		]_{i,j=1}^{d}$ $($here $[g^{(k)}_{ij}]_{i,j=1}^{d}$ stands for $d\times d$ matrix with entries $g^{(k)}_{ij}$, $k=1,2$, and  $[g^{(1)}_{ij}]_{i,j=1}^{d}\geq[g^{(2)}_{ij}%
		]_{i,j=1}^{d}$ means that $[g^{(1)}_{ij}]_{i,j=1}^{d}-[g^{(2)}_{ij}%
		]_{i,j=1}^{d}$ is nonnegative definite$)$, then $Y_{t}^{(1)}\geq Y_{t}^{(2)}$.
	\end{theorem}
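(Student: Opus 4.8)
The plan is to prove the comparison by linearization, reducing the claim to a positivity statement, and then to establish positivity by combining an integrating-factor/Girsanov reduction with the structural features of $G$-Brownian motion. First I would set $\hat Y=Y^{(1)}-Y^{(2)}$, $\hat Z=Z^{(1)}-Z^{(2)}$, $\hat K=K^{(1)}-K^{(2)}$ and $\hat\xi=\xi^{(1)}-\xi^{(2)}\geq0$. Subtracting the two equations and invoking the Lipschitz assumption $(A_{2})$, I would write the differences of the generators as first-order terms plus a remainder of fixed sign: there exist bounded adapted coefficients $\lambda_s,\theta^{ij}_s$ and $\mathbb{R}^d$-valued $\eta_s,\zeta^{ij}_s$ (all bounded by constants depending only on $L$) such that
\[
\hat Y_t=\hat\xi+\int_t^T\big(\lambda_s\hat Y_s+\langle\eta_s,\hat Z_s\rangle+m_s\big)\,ds+\int_t^T\big(\theta^{ij}_s\hat Y_s+\langle\zeta^{ij}_s,\hat Z_s\rangle+n^{ij}_s\big)\,d\langle B^i,B^j\rangle_s-\int_t^T\hat Z_s\,dB_s-(\hat K_T-\hat K_t),
\]
where $m_s=f^{(1)}(s,Y^{(2)}_s,Z^{(2)}_s)-f^{(2)}(s,Y^{(2)}_s,Z^{(2)}_s)\geq0$ by $f^{(1)}\geq f^{(2)}$, and $[n^{ij}_s]=[\,g^{(1)}_{ij}(s,Y^{(2)}_s,Z^{(2)}_s)-g^{(2)}_{ij}(s,Y^{(2)}_s,Z^{(2)}_s)\,]_{i,j=1}^d$ is nonnegative definite by hypothesis.

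A key observation is that the matrix hypothesis on $g$ is exactly what makes the corresponding free term nonnegative: since $G$ is monotone, the quadratic variation $d\langle B\rangle_s$ is, as a matrix-valued measure, supported in the cone of nonnegative definite matrices (its density lies in the bounded set of matrices associated with $G$), so $n^{ij}_s\,d\langle B^i,B^j\rangle_s=\mathrm{tr}\big([n^{ij}_s]\,d\langle B\rangle_s\big)\geq0$. Thus $\hat Y$ solves a linear $G$-BSDE whose terminal value $\hat\xi$ and whose two driving free terms ($m_s\,ds$ and the $g$-term) are all nonnegative. Next I would remove the zeroth-order dependence on $\hat Y$ via the integrating factor $\Gamma_t=\exp(\int_0^t\lambda_s\,ds+\int_0^t\theta^{ij}_s\,d\langle B^i,B^j\rangle_s)>0$, and remove the first-order term in $\hat Z$ by a Girsanov change of measure; boundedness of the $\hat Z$-coefficients guarantees that the associated stochastic exponential is a uniformly integrable density, so that the transformation maps the family $\mathcal{P}$ to another admissible family and produces a transformed sublinear expectation $\tilde{\mathbb{E}}_t$ under which the symmetric $G$-It\^o integral drops. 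This would yield a representation of the form
\[
\Gamma_t\hat Y_t=\tilde{\mathbb{E}}_t\Big[\Gamma_T\hat\xi+\int_t^T\Gamma_s\,m_s\,ds+\int_t^T\Gamma_s\,n^{ij}_s\,d\langle B^i,B^j\rangle_s-\int_t^T\Gamma_s\,d\hat K_s\Big].
\]

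The main obstacle is the last term, $-\int_t^T\Gamma_s\,d\hat K_s=\int_t^T\Gamma_s\,d(-K^{(1)}_s)+\int_t^T\Gamma_s\,dK^{(2)}_s$. Unlike the classical setting, this has no definite sign: although each $K^{(l)}$ is non-increasing, so that $\int_t^T\Gamma_s\,d(-K^{(1)}_s)\geq0$, the difference $\hat K$ is neither monotone nor a $G$-martingale, precisely because linear combinations of $G$-martingales fail to be $G$-martingales under a sublinear expectation. To handle this I would avoid forming $\hat K$ directly and instead exploit the characterization of non-increasing $G$-martingales, showing that the contribution of $\int_t^T\Gamma_s\,dK^{(2)}_s$ to the upper expectation $\tilde{\mathbb{E}}_t$ has the favorable sign; equivalently, I would route the argument through the nonlinear representation of the one-dimensional $G$-BSDE solution as an $\esssup_{P\in\mathcal{P}}$ of classical conditional expectations, to which the ordinary BSDE comparison applies measure by measure, the supremum then preserving the inequality. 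Combining these, the right-hand side above is nonnegative, so $\Gamma_t\hat Y_t\geq0$, and since $\Gamma_t>0$ we obtain $\hat Y_t\geq0$, i.e. $Y^{(1)}_t\geq Y^{(2)}_t$. I expect the delicate points throughout to be the rigorous justification of the Girsanov step and the sign control of the $K$-term within the non-dominated family $\mathcal{P}$; the a priori estimates together with the $G$-evaluation inequality of Theorem \ref{the2.10} would be used to keep every integral in the correct space so that $\tilde{\mathbb{E}}_t$ and the representation are well defined.
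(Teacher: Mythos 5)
First, a point of reference: the paper does not prove Theorem \ref{the1.5} at all --- it is quoted as a preliminary from \cite{HJPS1} --- so your attempt has to stand on its own, and it does not. The linearization, the observation that $m_s\,ds\ge 0$ and that $n^{ij}_s\,d\langle B^i,B^j\rangle_s=\mathrm{tr}\bigl([n^{ij}_s]\,d\langle B\rangle_s\bigr)\ge 0$, and the integrating factor are all sound, and you correctly put your finger on the real obstruction, namely that $\hat K=K^{(1)}-K^{(2)}$ is neither monotone nor a $G$-martingale. But neither of your two proposed escapes closes the gap. The representation $\Gamma_t\hat Y_t=\tilde{\mathbb{E}}_t[\cdots]$ cannot be asserted: for a sublinear conditional expectation, an identity $X_t=\tilde{\mathbb{E}}_t[X_T+\text{increments}]$ requires the compensating process to be a \emph{symmetric} martingale under $\tilde{\mathbb{E}}$, and the term $-\int\Gamma_s\,d\hat K_s$ destroys exactly that; moreover $\tilde{\mathbb{E}}_t$ is only subadditive, so even granting such a representation you cannot split the right-hand side into a sum of terms and discard $\int_t^T\Gamma_s\,dK^{(2)}_s$ by a sign argument --- that splitting is precisely the step that is legitimate for linear expectations and fails here. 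This is the same structural difficulty the introduction of the present paper highlights (linear combinations of $G$-martingales are not $G$-martingales), and "exploiting the characterization of non-increasing $G$-martingales" is a placeholder, not an argument.

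Your fallback --- represent $Y^{(2)}_t$ as an $\esssup_{P\in\mathcal P}$ of classical BSDE solutions and apply the classical comparison measure by measure --- presupposes a stochastic-control representation that is not available in the framework of \cite{HJPS} without substantial extra work. Under an individual $P\in\mathcal P$, the triple $(Y^{(2)},Z^{(2)},K^{(2)})$ is only a \emph{supersolution} of the corresponding classical BSDE, because $-(K^{(2)}_T-K^{(2)}_t)\ge 0$; this gives $Y^{(2)}_t\ge\mathcal Y^{(2),P}_t$, which is the inequality in the unhelpful direction, and upgrading it to an equality with an essential supremum is essentially the 2BSDE representation theorem of \cite{STZ} --- a result at least as deep as the comparison theorem you are trying to prove. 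In addition, a Girsanov transformation for $G$-Brownian motion is itself one of the main theorems of \cite{HJPS1} and holds only under restrictive conditions, so it cannot be invoked off the shelf to "drop the symmetric $G$-It\^o integral." The actual proof in \cite{HJPS1} takes a different route altogether: it never forms $K^{(1)}-K^{(2)}$, but instead builds monotone approximating sequences of auxiliary one-dimensional $G$-BSDEs with dominating generators, compares them step by step using explicit constructions and the a priori estimates, and passes to the limit. As written, your argument has a genuine gap at its central step.
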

	
	We also have the following estimates.
	
	\begin{proposition}
		[Propositions 3.8 and 5.1 of \cite{HJPS}] \label{pro3.5} For each $T\leq S$,
		let $\xi^{(l)}\in L_{G}^{\beta}(\Omega_{T})$,  $l=1,2$, and $f^{(l)}$,
		$g_{ij}^{(l)}$ satisfy $(A_{1})$ and $(A_{2})$ for some $\beta>1,L>0$.
		Assume that $(Y^{(l)},Z^{(l)},K^{(l)})\in{S}_{G}^{\alpha}(0,T)\times
		H^{\alpha}_{G}(0,T;\mathbb{R}^{d})\times\mathcal{A}_{G}^{\alpha}(0,T)$, for
		some $1<\alpha\leq\beta$, are the solutions of equation \eqref{eq1.1}
		corresponding to  $\xi^{(l)}$, $f^{(l)}$ and $g_{ij}^{(l)}$. Set $\hat{Y}%
		_{t}=Y_{t}^{(1)}-Y_{t}^{(2)}$.  Then there exist a constant $C(\alpha)$
		depending on  $S$, $G$, $L$ and $\alpha$ such that
		\begin{equation}
		|\hat{Y}_{t}|^{\alpha} \leq C(\alpha)\mathbb{\hat{E}}_{t}[|\hat{\xi}|^{\alpha
		}+(\int_{t} ^{T}\hat{h}_{s}ds)^{\alpha}],\label{my101}%
		\end{equation}
		where $\hat{\xi}=\xi^{(1)}-\xi^{(2)}$ and $\hat{h}_{s}=|f^{(1)}(s,Y_{s}%
		^{(2)},Z_{s} ^{(2)})-f^{(2)}(s,Y_{s}^{(2)},Z_{s}^{(2)})|+\sum_{i,j=1}%
		^{d}|g_{ij}^{(1)}(s,Y_{s} ^{(2)},Z_{s}^{(2)})-g_{ij}^{(2)}(s,Y_{s}^{(2)}%
		,Z_{s}^{(2)})|$.
	\end{proposition}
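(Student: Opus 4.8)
The plan is to reduce the estimate to a contractivity property of the conditional $G$-expectation, by linearising the difference equation and then stripping off its zeroth- and first-order terms through a weight process together with a change of the underlying sublinear expectation, in the spirit of the proof of the comparison theorem (Theorem \ref{the1.5}).

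First I would write down the equation for $\hat{Y}=Y^{(1)}-Y^{(2)}$. Subtracting the two copies of \eqref{eq1.1} and setting $\hat{Z}=Z^{(1)}-Z^{(2)}$, $\hat{K}=K^{(1)}-K^{(2)}$ gives
\[
\hat{Y}_t=\hat{\xi}+\int_t^T \Delta f_s\,ds+\int_t^T \Delta g_{ij,s}\,d\langle B^i,B^j\rangle_s-\int_t^T \hat{Z}_s\,dB_s-(\hat{K}_T-\hat{K}_t),
\]
with $\Delta f_s=f^{(1)}(s,Y^{(1)}_s,Z^{(1)}_s)-f^{(2)}(s,Y^{(2)}_s,Z^{(2)}_s)$ and $\Delta g_{ij,s}$ defined analogously. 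Using $(A_2)$ I would split each increment into a part that is Lipschitz in the differences and a part that only involves the two generators evaluated at the common point $(Y^{(2)}_s,Z^{(2)}_s)$:
\[
\Delta f_s=\lambda_s\hat{Y}_s+\langle\eta_s,\hat{Z}_s\rangle+\delta f_s,\qquad \Delta g_{ij,s}=\mu^{ij}_s\hat{Y}_s+\langle\nu^{ij}_s,\hat{Z}_s\rangle+\delta g_{ij,s},
\]
where $\lambda,\eta,\mu^{ij},\nu^{ij}$ are adapted processes bounded by $L$ coming from difference quotients, and the forcing terms satisfy $|\delta f_s|+\sum_{i,j}|\delta g_{ij,s}|\leq\hat{h}_s$. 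Thus $\hat{Y}$ solves a linear $G$-BSDE with terminal value $\hat{\xi}$, forcing $\hat{h}$, and increment term $\hat{K}$.

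Here lies the main obstacle. Since $\hat{K}$ is a difference of two non-increasing $G$-martingales it is not itself a $G$-martingale, so $(\hat{Y},\hat{Z},\hat{K})$ is not a genuine solution in the sense of Theorem \ref{wellposeness of G-BSDE}; a naive $G$-It\^o energy estimate on $|\hat{Y}|^\alpha$ would control $\hat{Y}$ only by the individual data of the two equations through separate bounds on $K^{(1)},K^{(2)}$, and hence could not produce the difference-only right-hand side of \eqref{my101}. To obtain a difference-only bound I would therefore not isolate $\hat{K}$. Instead I introduce the weight $\Gamma_s=\exp(\int_0^s\lambda_r\,dr+\int_0^s\mu^{ij}_r\,d\langle B^i,B^j\rangle_r)$, which is bounded above and below, to absorb the $\hat{Y}$-linear terms via It\^o's product rule, and then carry out the Girsanov-type change of sublinear expectation used to prove the comparison theorem in order to absorb the $\hat{Z}$-linear terms $\langle\eta_s,\hat{Z}_s\rangle\,ds$ and $\langle\nu^{ij}_s,\hat{Z}_s\rangle\,d\langle B^i,B^j\rangle_s$ into a new $G$-Brownian motion under a new $G$-expectation $\tilde{\mathbb{E}}$. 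The decisive structural fact, and the delicate point to make rigorous, is that under this transformation $\int\hat{Z}\,dB$ becomes a symmetric $\tilde{G}$-martingale while $K^{(1)},K^{(2)}$ remain non-increasing $\tilde{G}$-martingales, so the entire remaining stochastic part of $\Gamma\hat{Y}$ is exactly its $\tilde{G}$-martingale part.

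After the transformation, $\Gamma\hat{Y}$ carries no $\hat{Y}$- or $\hat{Z}$-linear term, and the $G$-martingale decomposition represents it as the conditional $\tilde{G}$-expectation of the transformed data,
\[
\Gamma_t\hat{Y}_t=\tilde{\mathbb{E}}_t\Big[\Gamma_T\hat{\xi}+\int_t^T\Gamma_s\,\delta f_s\,ds+\int_t^T\Gamma_s\,\delta g_{ij,s}\,d\langle B^i,B^j\rangle_s\Big].
\]
Applying $|\tilde{\mathbb{E}}_t[X]|\leq\tilde{\mathbb{E}}_t[|X|]$, the two-sided bounds on $\Gamma$, and the absolute continuity of $d\langle B^i,B^j\rangle_s$ with respect to $ds$ with bounded density, I would obtain $|\hat{Y}_t|\leq C\,\tilde{\mathbb{E}}_t[|\hat{\xi}|+\int_t^T\hat{h}_s\,ds]$. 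Finally I would transfer $\tilde{\mathbb{E}}_t$ back to $\hat{\mathbb{E}}_t$, the two being comparable because the Girsanov density is bounded away from $0$ and $\infty$ by non-degeneracy of $G$ and boundedness of $\eta,\nu$, then raise to the power $\alpha$ and use conditional Jensen for the convex map $x\mapsto|x|^\alpha$ together with $(a+b)^\alpha\leq 2^{\alpha-1}(a^\alpha+b^\alpha)$ to arrive at \eqref{my101}.
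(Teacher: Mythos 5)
This proposition is not proved in the paper at all: it is imported verbatim from Propositions 3.8 and 5.1 of \cite{HJPS}, so there is no internal proof to compare against, and your attempt has to be judged on its own terms. It contains a genuine gap at exactly the step you single out as ``the decisive structural fact''. After linearising and introducing the weight $\Gamma$ and the new expectation $\tilde{\mathbb{E}}$, the representation $\Gamma_t\hat Y_t=\tilde{\mathbb{E}}_t[\Gamma_T\hat\xi+\int_t^T\Gamma_s\delta f_s\,ds+\cdots]$ requires the increments of $N:=-\int\Gamma\hat Z\,dB-\int\Gamma\,dK^{(1)}+\int\Gamma\,dK^{(2)}$ to form a \emph{symmetric} $\tilde G$-martingale, i.e.\ both $N$ and $-N$ must be $\tilde G$-martingales. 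Even granting (what itself needs proof) that each $K^{(l)}$ remains a non-increasing $\tilde G$-martingale after the change of expectation, $K^{(1)}$ enters $\hat K=K^{(1)}-K^{(2)}$ with the opposite sign to $K^{(2)}$; under a sublinear expectation the negative of a $G$-martingale is not a $G$-martingale, and a sum of $G$-martingales is not a $G$-martingale. This is precisely the non-additivity obstruction the introduction names as the reason the one-dimensional method does not extend to systems. If you retreat to a one-sided inequality via subadditivity of $\tilde{\mathbb{E}}_t$, you pick up the extra term $\tilde{\mathbb{E}}_t[K^{(1)}_t-K^{(1)}_T]\geq 0$, which is controlled only by the total variation of $K^{(1)}$, i.e.\ by the data of the first equation alone, and the difference-only bound \eqref{my101} is lost; for the lower bound on $\hat Y_t$ the roles of $K^{(1)}$ and $K^{(2)}$ are exchanged and the same problem recurs.

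The proof in \cite{HJPS} avoids this by never treating $\hat K$ as a single martingale-like object. One applies the $G$-It\^o formula directly to $(|\hat Y_t|^2+\varepsilon)^{\alpha/2}$ multiplied by a bounded exponential weight: the $\hat Y$-Lipschitz terms are absorbed by the weight; the $\hat Z$-terms are absorbed by Young's inequality into the second-order (quadratic variation) term in $\hat Z$ produced by It\^o's formula, which has a favourable sign and, thanks to the standing non-degeneracy assumption $G(A)-G(B)\geq\frac12\underline{\sigma}^2\text{tr}[A-B]$, dominates the $ds$-integrals of $(|\hat Y|^2+\varepsilon)^{(\alpha-2)/2}|\hat Y||\hat Z|$ --- this replaces your Girsanov step and explains why non-degeneracy of $G$ is assumed throughout the present paper; and the two terms $(|\hat Y|^2+\varepsilon)^{(\alpha-2)/2}\hat Y\,dK^{(l)}$ are estimated \emph{separately}, using $dK^{(l)}\leq 0$ and the $G$-martingale property of each $K^{(l)}$ individually (Lemma 3.4 of \cite{HJPS}). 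A repair of your argument would have to replace the martingale-representation step by an argument of this kind; the linearisation--Girsanov route, which is standard for classical BSDEs and is related to the proof of the comparison theorem (Theorem \ref{the1.5}), does not deliver the two-sided, difference-only estimate here.
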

	
	\begin{corollary}
		[Corollary 5.2 of \cite{HJPS}]\label{pro3.6}  For each $T\leq S$, let $\xi\in
		L_{G}^{\beta}(\Omega_{T})$ and $f$, $g_{ij}$ satisfy $(A_{1})$ and $(A_{2})$
		for some $\beta>1,L>0$.  Assume that $(Y,Z,K)\in{S}_{G}^{\alpha}(0,T)\times
		H^{\alpha}_{G}(0,T;\mathbb{R}^{d})\times\mathcal{A}_{G}^{\alpha}(0,T)$, for
		some $1<\alpha\leq\beta$, is a solution of equation \eqref{eq1.1}
		corresponding to  $\xi$, $f$ and $g_{ij}$.  Then there exist a constant
		$C(\alpha)$ depending on  $S$, $G$, $L$ and $\alpha$ such that
		\begin{equation}
		\label{eq2.5}|{Y}_{t}|^{\alpha} \leq C(\alpha)\mathbb{\hat{E}}_{t}[|{\xi
		}|^{\alpha}+(\int_{t} ^{T}h_{s}ds)^{\alpha}],
		\end{equation}
		where $h_{s}=|f(s,0,0)|+\sum_{i,j=1}^{d}|g_{ij}(s,0,0)|$.
	\end{corollary}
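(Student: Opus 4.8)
The plan is to deduce the estimate directly from the two-solution bound \eqref{my101} in Proposition \ref{pro3.5}, by specializing the second solution to be the trivial one. Concretely, I would set $(Y^{(1)},Z^{(1)},K^{(1)})=(Y,Z,K)$ with data $(\xi,f,g_{ij})$, and take $(Y^{(2)},Z^{(2)},K^{(2)})=(0,0,0)$, which is the solution of \eqref{eq1.1} corresponding to the zero terminal condition $\xi^{(2)}=0$ and the zero generators $f^{(2)}\equiv 0$, $g_{ij}^{(2)}\equiv 0$. One checks at once that this triple solves the equation, since every integral term as well as the martingale increment vanishes.

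Next I would verify that the hypotheses of Proposition \ref{pro3.5} hold for this pair. The first datum satisfies $(A_{1})$-$(A_{2})$ by assumption; the zero generators trivially satisfy $(A_{1})$ (the null process lies in $M_{G}^{\beta}(0,T)$) and $(A_{2})$ with Lipschitz constant $0\leq L$, and the trivial triple plainly belongs to ${S}_{G}^{\alpha}(0,T)\times H^{\alpha}_{G}(0,T;\mathbb{R}^{d})\times\mathcal{A}_{G}^{\alpha}(0,T)$. Hence both solutions correspond to data that are $L$-Lipschitz, and the constant $C(\alpha)$ produced by Proposition \ref{pro3.5}, which depends only on $S$, $G$, $L$ and $\alpha$, may be used without modification.

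With this choice the quantities in \eqref{my101} simplify: $\hat{Y}_{t}=Y_{t}$, $\hat{\xi}=\xi$, and since $Y_{s}^{(2)}=Z_{s}^{(2)}=0$ while $f^{(2)},g_{ij}^{(2)}$ vanish identically, the auxiliary term reduces to
\[
\hat{h}_{s}=|f(s,0,0)|+\sum_{i,j=1}^{d}|g_{ij}(s,0,0)|=h_{s}.
\]
Substituting into \eqref{my101} yields exactly \eqref{eq2.5}. There is essentially no real obstacle beyond this bookkeeping: the only point deserving a moment's care is confirming that the trivial triple is admissible in the required spaces and that the zero generators meet $(A_{1})$-$(A_{2})$, so that the constant in Proposition \ref{pro3.5} transfers verbatim.
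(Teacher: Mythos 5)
Your proof is correct and is essentially the same argument the paper uses: it simply specializes the two-solution estimate \eqref{my101} of Proposition \ref{pro3.5} to the pair $(\xi,f,g_{ij})$ and the trivial data $(0,0,0)$, whose solution is the zero triple, exactly as the paper does for the analogous multi-dimensional Corollary \ref{Myt2}. The bookkeeping (admissibility of the zero triple, $(A_1)$--$(A_2)$ for the zero generators, and the reduction of $\hat h_s$ to $h_s$) is all handled correctly.
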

	
	\begin{proposition}
		[Propositions 3.8 of \cite{HJPS}] \label{pro2.9} Let $\xi^{(l)}\in
		L_{G}^{\beta}(\Omega_{T})$,  $l=1,2$, and $f^{(l)}$, $g_{ij}^{(l)}$ satisfy
		${(A_{1})}$ and ${(A_{2})}$ for some $\beta>1,L>0$.  Assume that
		$(Y^{(l)},Z^{(l)},K^{(l)})\in{S}_{G}^{\alpha}(0,T)\times H^{\alpha}%
		_{G}(0,T;\mathbb{R}^{d})\times\mathcal{A}_{G}^{\alpha}(0,T)$, for some
		$1<\alpha\leq\beta$, are the solutions of equation \eqref{eq1.1} corresponding
		to  $\xi^{(l)}$, $f^{(l)}$ and $g_{ij}^{(l)}$. Set $\hat{Y}_{t}=Y_{t}%
		^{(1)}-Y_{t}^{(2)} ,\hat{Z}_{t}=Z_{t}^{(1)}-Z_{t}^{(2)}$.  Then there exist a
		constant $C(\alpha)$ depending on  $T$, $G$, $L$ and $\alpha$ such that
		\begin{equation}
		\mathbb{\hat{E}}[(\int_{0}^{T}|\hat{Z}_{s}|^{2}ds)^{\frac{\alpha}{2}}]\leq
		C(\alpha)\{ \Vert\hat{Y}\Vert_{{S}^{\alpha}_{G}}^{\alpha}+\Vert\hat{Y}
		\Vert_{{S}^{\alpha}_{G}}^{\frac{\alpha}{2}}\sum_{l=1}^{2}[||Y^{(l)}||_{
			{S}^{\alpha}_{G}}^{\frac{\alpha}{2}}+||\int_{0}^{T}h_{s}^{(l)}ds||_{
			L^{\alpha}_{G}}^{\frac{\alpha}{2}}]\},\label{my102}%
		\end{equation}
		where $h_{s}^{(l)}=|f^{(l)}(s,0,0)|+\sum_{i,j=1}^{d}|g^{(l)}_{ij}(s,0,0)|$.
	\end{proposition}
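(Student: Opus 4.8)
The plan is to apply the $G$-It\^o formula to $|\hat{Y}_t|^2$ so as to make the quadratic-variation term $\int_0^T\hat{Z}_s^i\hat{Z}_s^j\,d\langle B^i,B^j\rangle_s$ appear, and then to invoke the non-degeneracy of $G$ to bound it from below by $\underline{\sigma}^2\int_0^T|\hat{Z}_s|^2\,ds$. Writing $\Delta f_s=f^{(1)}(s,Y_s^{(1)},Z_s^{(1)})-f^{(2)}(s,Y_s^{(2)},Z_s^{(2)})$, $\Delta g_{ij,s}$ analogously, and $\hat{K}=K^{(1)}-K^{(2)}$, subtraction of the two equations \eqref{eq1.1} gives a backward equation for $\hat{Y}$, and It\^o's formula on $|\hat{Y}_t|^2$ over $[0,T]$ yields, after discarding the favourable term $-|\hat{Y}_0|^2\le 0$,
\begin{equation*}
\underline{\sigma}^2\int_0^T|\hat{Z}_s|^2\,ds\le |\hat{Y}_T|^2+2\Big|\int_0^T\hat{Y}_s\Delta f_s\,ds\Big|+2\Big|\int_0^T\hat{Y}_s\Delta g_{ij,s}\,d\langle B^i,B^j\rangle_s\Big|+2\Big|\int_0^T\hat{Y}_s\hat{Z}_s\,dB_s\Big|+2\Big|\int_0^T\hat{Y}_s\,d\hat{K}_s\Big|.
\end{equation*}

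Next I would raise both sides to the power $\alpha/2$, use $(\sum_i a_i)^{\alpha/2}\le C_\alpha\sum_i a_i^{\alpha/2}$, and take $\hat{\mathbb{E}}$. The terminal term gives $\hat{\mathbb{E}}[|\hat{Y}_T|^\alpha]\le\|\hat{Y}\|_{S_G^\alpha}^\alpha$. For the two generator terms I bound $\int_0^T|\Delta f_s|\,ds$ (and the $g$ part similarly, after dominating $|d\langle B^i,B^j\rangle_s|$ by a multiple of $ds$ since $\langle B\rangle$ has bounded density) through the Lipschitz assumption $(A_2)$ by a constant times $\sum_{l=1}^2\int_0^T\big(|Y_s^{(l)}|+|Z_s^{(l)}|+h_s^{(l)}\big)\,ds$. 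Factoring out $\sup_s|\hat{Y}_s|$ and applying H\"older's inequality then splits every resulting term into $\|\hat{Y}\|_{S_G^\alpha}^{\alpha/2}$ times one of $\|Y^{(l)}\|_{S_G^\alpha}^{\alpha/2}$, the factor $(\hat{\mathbb{E}}[(\int_0^T|Z_s^{(l)}|^2ds)^{\alpha/2}])^{1/2}$ (via $\int_0^T|Z^{(l)}|ds\le\sqrt{T}(\int_0^T|Z^{(l)}|^2ds)^{1/2}$), or $\|\int_0^T h_s^{(l)}ds\|_{L_G^\alpha}^{\alpha/2}$. The stochastic-integral term is handled by the $G$-Burkholder--Davis--Gundy inequality: its $\alpha/2$-moment is dominated by $C\hat{\mathbb{E}}[\sup_s|\hat{Y}_s|^{\alpha/2}(\int_0^T|\hat{Z}_s|^2ds)^{\alpha/4}]$, to which Young's inequality is applied to split off a small multiple $\varepsilon\,\hat{\mathbb{E}}[(\int_0^T|\hat{Z}_s|^2ds)^{\alpha/2}]$ that, for $\varepsilon$ small, is absorbed into the left-hand side, leaving a contribution $C_\varepsilon\|\hat{Y}\|_{S_G^\alpha}^\alpha$.

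The remaining $\hat{K}$-term I would control by $2\sup_s|\hat{Y}_s|(|K_T^{(1)}|+|K_T^{(2)}|)$, since $\hat{K}$ is the difference of two non-increasing processes whose total variation on $[0,T]$ equals $|K_T^{(l)}|$; H\"older then bounds its contribution by $C\|\hat{Y}\|_{S_G^\alpha}^{\alpha/2}\sum_l(\hat{\mathbb{E}}[|K_T^{(l)}|^\alpha])^{1/2}$. At this stage the only quantities left are the individual-solution norms $\hat{\mathbb{E}}[(\int_0^T|Z_s^{(l)}|^2ds)^{\alpha/2}]$ and $\hat{\mathbb{E}}[|K_T^{(l)}|^\alpha]$, which are precisely the one-dimensional $Z$- and $K$-estimates of \cite{HJPS}; each is bounded by $C(\|Y^{(l)}\|_{S_G^\alpha}^\alpha+\|\int_0^T h_s^{(l)}ds\|_{L_G^\alpha}^\alpha)$, so that their square roots reproduce the factors $\|Y^{(l)}\|_{S_G^\alpha}^{\alpha/2}+\|\int_0^T h_s^{(l)}ds\|_{L_G^\alpha}^{\alpha/2}$ in the claimed bound. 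Collecting all contributions gives the asserted inequality.

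I expect the main obstacle to be the sublinearity of $\hat{\mathbb{E}}$: unlike the classical case, the stochastic integral $\int_0^T\hat{Y}_s\hat{Z}_s\,dB_s$ does not have vanishing expectation, so one cannot simply take expectations to annihilate it, and the argument must instead route through the $G$-BDG inequality together with the Young-type absorption of $\hat{\mathbb{E}}[(\int_0^T|\hat{Z}_s|^2ds)^{\alpha/2}]$ into the left-hand side --- an absorption that is legitimate only because $\hat{Z}\in H_G^\alpha$ makes that quantity finite a priori. A secondary subtlety is that, since the two generators $f^{(1)},f^{(2)}$ differ, the generator differences cannot be estimated through $|\hat{Y}|,|\hat{Z}|$ alone and necessarily bring in the separate norms of $Z^{(l)}$ and $K_T^{(l)}$, whose control is borrowed from the one-dimensional theory.
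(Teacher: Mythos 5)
Your proposal is correct and is essentially the standard argument: this proposition is quoted in the paper without proof (it is Proposition 3.8 of \cite{HJPS}), and the proof given there proceeds exactly as you describe --- $G$-It\^{o}'s formula applied to $|\hat{Y}_t|^2$, the non-degeneracy of $G$ to bound $\int_0^T\hat{Z}_s^i\hat{Z}_s^j\,d\langle B^i,B^j\rangle_s$ from below by $\underline{\sigma}^2\int_0^T|\hat{Z}_s|^2\,ds$, the $G$-BDG inequality with Young absorption for the stochastic integral, control of the $\hat{K}$-term by $\sup_s|\hat{Y}_s|\,(|K_T^{(1)}|+|K_T^{(2)}|)$, and the one-dimensional a priori bounds on $\hat{\mathbb{E}}[|K_T^{(l)}|^{\alpha}]$ and $\hat{\mathbb{E}}[(\int_0^T|Z_s^{(l)}|^2ds)^{\alpha/2}]$ to produce the factors $\Vert Y^{(l)}\Vert_{S_G^{\alpha}}^{\alpha/2}+\Vert\int_0^Th_s^{(l)}ds\Vert_{L_G^{\alpha}}^{\alpha/2}$. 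You have also correctly identified the two genuine subtleties (the non-vanishing expectation of the $G$-It\^{o} integral and the need to route the generator differences through the separate norms of the two solutions), so there is nothing to add.
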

	
	\section{Well-posedness of multi-dimensional $G$-BSDEs}
	
	In this paper, we shall consider the following type of $n$-dimensional
	$G$-BSDE  on the interval $[0,T]$:
	\begin{equation}
	\label{my1}
	\begin{split}
	Y_{t}^{l}  &  =\xi^{l}+\int^{T}_{t} f^{l}(s,Y_{s},Z_{s}^{l})ds+\int^{T}_{t}
	g_{ij}^{l}(s,Y_{s},Z_{s}^{l})d\langle B^{i},B^{j}\rangle_{s}-\int^{T}_{t}
	Z_{s}^{l} dB_{s}-(K_{T}^{l}-K_{t}^{l}),\ 1\leq l\leq n,
	\end{split}
	\end{equation}
	where
	\[
	f^{l}(t,\omega,y,z^{l}), g_{ij}^{l}(t,\omega,y,z^{l}): [0,T]\times\Omega
	_{T}\times\mathbb{R}^{n}\times\mathbb{R}^{d}\rightarrow\mathbb{R},
	\ \ \forall1\leq l\leq n,
	\]
	satisfy:
	
	\begin{description}

		\item[($H_{1}$)] there is some constant $\beta>1$ such that for each $y,z$,
		$f^{l}(\cdot,\cdot,y,z^{l}),g_{ij}^{l}(\cdot,\cdot,y,z^{l})\in M_{G}^{\beta
		}(0,T)$,
		
		\item[($H_{2}$)] there exists some $L>0$ such that, for each $y_{1},y_{2}%
		\in\mathbb{R}^{n}, z^{l}_{1},z^{l}_{2}\in\mathbb{R}^{d},$
		\[
		|f^{l}(t,y_{1},z^{l}_{1})-f^{l}(t,y_{2},z_{2}^{l})|+\sum_{i,j=1}^{d}
		|g_{ij}^{l}(t,y_{1},z^{l}_{1})-g_{ij}^{l}(t,y_{2},z_{2}^{l})|\leq
		L(|y_{1}-y_{2}|+|z_{1}^{l}-z^{l}_{2}|).
		\]
		
	\end{description}
	
	In the sequel we denote by $M^{p} _{G}(a,b;\mathbb{R}^{n})$, $S_{G}^{\alpha
	}(a,b;\mathbb{R}^{n})$, $H^{\alpha}_{G}(a,b;\mathbb{R}^{n\times d})$ and
	$\mathcal{A}^{\alpha}_{G} (a,b;\mathbb{R}^{n})$ the corresponding spaces for
	the stochastic processes  having time indexes on $[a,b]$.
	
	\subsection{Existence and Uniqueness}
	
	We first study the local solution to $G$-BSDE \eqref{my1}. Indeed, we have
	
	\begin{theorem}
		\label{my16} Assume that $(H_{1})-(H_{2})$ hold for some $\beta>1$. Then
		there exists a constant $0<\delta\leq T$ depending only on $T,G,n, \beta$  and
		$L$ such that for any $h\in(0,\delta]$, $t\in[0,T-h]$ and given $\zeta\in
		L^{\beta} _{G}(\Omega_{t+h};\mathbb{R}^{n})$, the $G$-BSDE on the interval
		$[t,t+h]$
		\begin{equation}
		\label{my1-1}%
		\begin{split}
		Y_{s}^{l}  &  =\zeta^{l}+\int^{t+h}_{s} f^{l}(r,Y_{r},Z_{r}^{l})dr+\int%
		^{t+h}_{s} g_{ij}^{l}(r,Y_{r},Z_{r}^{l})d\langle B^{i},B^{j}\rangle_{r}
		-\int^{t+h}_{s} Z_{r}^{l} dB_{r}-(K_{t+h}^{l}-K_{s}^{l}),\\
		&  \ \ \ \ 1\leq l\leq n,
		\end{split}
		\end{equation}
		admits a unique solution $(Y,Z,K)\in{S}_{G}^{\alpha}(t,t+h;\mathbb{R}
		^{n})\times H^{\alpha}_{G}(t,t+h;\mathbb{R}^{n\times d})\times\mathcal{A}
		^{\alpha}_{G}(t,t+h;\mathbb{R}^{n})$ for each $1<\alpha<\beta$. Moreover,
		$Y\in{M}_{G}^{\beta}(t,t+h;\mathbb{R}^{n}). $
	\end{theorem}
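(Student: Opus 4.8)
The plan is to exploit the diagonality in $z$ to reduce \eqref{my1-1} to a family of $n$ decoupled one-dimensional $G$-BSDEs, and then run a Picard iteration on the $Y$-component in the Banach space $M_{G}^{\beta}(t,t+h;\mathbb{R}^{n})$; this choice of space is exactly what produces the extra integrability asserted in the last line of the statement. Concretely, for a fixed process $U\in M_{G}^{\beta}(t,t+h;\mathbb{R}^{n})$ I would freeze $U$ inside the generators and, for each $1\leq l\leq n$, consider the one-dimensional $G$-BSDE on $[t,t+h]$ with terminal value $\zeta^{l}$ and generators $\tilde{f}^{l}(r,z^{l}):=f^{l}(r,U_{r},z^{l})$, $\tilde{g}_{ij}^{l}(r,z^{l}):=g_{ij}^{l}(r,U_{r},z^{l})$. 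By $(H_{2})$ these are Lipschitz in $z^{l}$, and by $(H_{1})$ together with $|f^{l}(r,U_{r},z^{l})|\leq|f^{l}(r,0,z^{l})|+L|U_{r}|$ they belong to $M_{G}^{\beta}(t,t+h)$ for each fixed $z^{l}$; hence they satisfy $(A_{1})$--$(A_{2})$. Theorem \ref{wellposeness of G-BSDE} then gives a unique triple $(\bar{Y}^{l},\bar{Z}^{l},\bar{K}^{l})\in S_{G}^{\alpha}\times H_{G}^{\alpha}\times\mathcal{A}_{G}^{\alpha}$ for every $1<\alpha<\beta$, and I define the solution map $\Gamma(U):=\bar{Y}=(\bar{Y}^{1},\ldots,\bar{Y}^{n})$.

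Two facts are needed for Banach's fixed point theorem. For the self-map property I would apply the a priori bound \eqref{eq2.5} of Corollary \ref{pro3.6} with $\alpha=\beta$ to each component, giving $|\bar{Y}^{l}_{s}|^{\beta}\leq C\,\hat{\mathbb{E}}_{s}[|\zeta^{l}|^{\beta}+(\int_{s}^{t+h}\tilde{h}^{l}_{r}\,dr)^{\beta}]$ with $\tilde{h}^{l}_{r}=|f^{l}(r,U_{r},0)|+\sum_{i,j}|g^{l}_{ij}(r,U_{r},0)|\leq |f^{l}(r,0,0)|+\sum_{i,j}|g^{l}_{ij}(r,0,0)|+L|U_{r}|$; integrating $ds$ over $[t,t+h]$ and using the tower property $\hat{\mathbb{E}}[\hat{\mathbb{E}}_{s}[\cdot]]=\hat{\mathbb{E}}[\cdot]$ together with $\zeta\in L_{G}^{\beta}$ and $U\in M_{G}^{\beta}$ shows $\bar{Y}\in M_{G}^{\beta}$. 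This same computation applied to the fixed point is what yields the final claim $Y\in M_{G}^{\beta}$.

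For the contraction I would take $U,U'\in M_{G}^{\beta}$, set $\bar{Y}=\Gamma(U)$, $\bar{Y}'=\Gamma(U')$, and apply the stability estimate \eqref{my101} of Proposition \ref{pro3.5} to the two one-dimensional problems for each $l$. Since they share the terminal value $\zeta^{l}$, the $\hat{\xi}$ term vanishes, and $(H_{2})$ gives $\hat{h}^{l}_{r}\leq L|U_{r}-U'_{r}|$, whence
\[
|\bar{Y}^{l}_{s}-\bar{Y}'^{l}_{s}|^{\beta}\leq C(\beta)\,\hat{\mathbb{E}}_{s}\Big[\Big(\int_{s}^{t+h}|U_{r}-U'_{r}|\,dr\Big)^{\beta}\Big].
\]
Integrating in $s$ over $[t,t+h]$, using the tower property once more, Jensen's inequality $(\int_{t}^{t+h}|U_{r}-U'_{r}|\,dr)^{\beta}\leq h^{\beta-1}\int_{t}^{t+h}|U_{r}-U'_{r}|^{\beta}\,dr$, and summing over $l$, I get $\|\bar{Y}-\bar{Y}'\|_{M_{G}^{\beta}}^{\beta}\leq C(T,G,n,\beta,L)\,h^{\beta}\,\|U-U'\|_{M_{G}^{\beta}}^{\beta}$. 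Hence $\Gamma$ is a contraction as soon as $h\leq\delta$, where $\delta$ is chosen so that $C^{1/\beta}\delta<1$ and depends only on $T,G,n,\beta,L$. The unique fixed point $Y\in M_{G}^{\beta}$, together with the associated $(Z^{l},K^{l})$ coming from the one-dimensional solutions, solves \eqref{my1-1}; conversely, any solution has $Y\in M_{G}^{\beta}$ (again by \eqref{eq2.5}) and is a fixed point of $\Gamma$, so uniqueness of $(Y,Z,K)$ follows from uniqueness of the fixed point and of the one-dimensional solutions.

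The delicate point is the choice of iteration space. The naive attempt to contract in the sup-based norm of $S_{G}^{\alpha}$ forces one to pass from $\hat{\mathbb{E}}_{s}[\,\cdot\,]$ to $\hat{\mathbb{E}}[\sup_{s}\hat{\mathbb{E}}_{s}[\,\cdot\,]]$ via the $G$-evaluation estimate \eqref{Myeq2.2}, which costs a power of integrability ($\alpha\mapsto\alpha+\delta$) and therefore cannot close the contraction within a single space. Working instead in $M_{G}^{\beta}$ and invoking the tower property keeps the exponent $\beta$ on both sides, so no integrability is lost; the price is only that \eqref{my101} and \eqref{eq2.5} must be used at the endpoint exponent $\alpha=\beta$, which Proposition \ref{pro3.5} and Corollary \ref{pro3.6} indeed permit. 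I expect verifying that the frozen generators genuinely lie in $M_{G}^{\beta}(t,t+h)$ (and not merely have finite norm) to be the routine but technically fussy step.
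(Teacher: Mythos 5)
Your strategy is essentially the paper's: freeze the $y$-argument of the generators along a process $U$, solve the resulting decoupled one-dimensional $G$-BSDEs by Theorem \ref{wellposeness of G-BSDE}, and contract the induced solution map in $M_{G}^{\beta}(t,t+h;\mathbb{R}^{n})$ for $h$ small. Your diagnosis of why the iteration must live in $M_{G}^{\beta}$ rather than in a sup-norm space (the $G$-evaluation estimate \eqref{Myeq2.2} costs a power of integrability) is precisely the paper's reason, and your contraction estimate is the paper's Lemma \ref{myq9}. Two differences are worth recording. First, you freeze $U$ in \emph{all} $n$ slots of $y$, whereas the paper's iteration \eqref{myq4} keeps $y^{l}$ alive in the $l$-th equation and freezes only the other components; for Theorem \ref{my16} this is immaterial, but Remark \ref{re3.1} explains that the paper's choice is what later yields the comparison theorem under the weaker quasi-monotonicity hypothesis (i) of Theorem \ref{Myth4.1}. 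Second, your passage from the fixed point to the full triple $(Y,Z,K)$ --- plug the fixed point back into the one-dimensional well-posedness theorem --- is cleaner than the paper's, which instead shows the iterates $Y^{(i)}$ are Cauchy in $S_{G}^{\alpha}$ (via Theorem \ref{the2.10}) and $Z^{(i)}$ Cauchy in $H_{G}^{\alpha}$ (via Proposition \ref{pro2.9}); the paper needs that extra convergence information anyway for the limit argument in its comparison theorem.

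The one genuine gap is in the self-map property (and in the corresponding step of your uniqueness argument). Showing that $\hat{\mathbb{E}}[\int_{t}^{t+h}|\bar{Y}_{s}|^{\beta}ds]<\infty$ by integrating the pointwise bound of Corollary \ref{pro3.6} does not show $\bar{Y}\in M_{G}^{\beta}(t,t+h;\mathbb{R}^{n})$: that space is a completion of simple processes, so membership is strictly stronger than finiteness of the norm. You flag exactly this distinction for the frozen generators but then elide it for $\bar{Y}$ itself. The paper closes it in Lemma \ref{myq6}: since $\bar{Y}\in S_{G}^{\alpha}\subset M_{G}^{\alpha}$, and the dominating process $\rho_{s}=C\hat{\mathbb{E}}_{s}[|\zeta|^{\beta}+(\int_{s}^{t+h}h_{r}\,dr)^{\beta}]$ lies in $M_{G}^{1}$, the uniform-integrability characterization of $M_{G}^{p}$ (Theorem 4.7 of \cite{HW1}) upgrades $\bar{Y}$ to $M_{G}^{\beta}$. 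The same device is needed for the frozen generators (the paper's Lemma \ref{MG lemma}, via a partition of unity) and, in your uniqueness step, to certify that the $Y$-component of an arbitrary solution actually belongs to $M_{G}^{\beta}$ so that it is eligible as a fixed point of $\Gamma$. These points are repairable, but without them the Banach fixed-point argument is not set in a well-defined space.
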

	
	In order to prove Theorem \ref{my16}, we consider the following $G$-BSDE on
	the interval $[t,t+h]$:
	\begin{equation}%
	\begin{split}
	Y_{s}^{U,l}  &  =\zeta^{l}+\int_{s}^{t+h}f^{l,U}(r,Y_{r}^{U,l},Z_{r}%
	^{U,l})dr+\int_{s}^{t+h}g_{ij}^{l,U}(r,Y_{r}^{U,l},Z_{r}^{U,l})d\langle
	B^{i},B^{j}\rangle_{r}-\int_{s}^{t+h}Z_{r}^{U,l}dB_{r}\\
	&  \ \ \ -(K_{t+h}^{U,l}-K_{s}^{U,l}),\ \ \ 1\leq l\leq n,
	\end{split}
	\label{myq4}%
	\end{equation}
	where $U\in{M}_{G}^{\beta}(t,t+h;\mathbb{R}^{n})$, $\zeta\in L_{G}^{\beta
	}(\Omega_{t+h};\mathbb{R}^{n})$, $h\in\lbrack0,T-t]$ and
	\[
	\psi^{l,U}(t,y^{l},z^{l})=\psi^{l}(t,U_{t}^{1},\cdots,U_{t}^{l-1},y^{l}%
	,U_{t}^{l+1},\cdots,U_{t}^{n},z^{l}):[0,T]\times\Omega_{T}\times
	\mathbb{R}\times\mathbb{R}^{d}\rightarrow\mathbb{R},\ \ \text{ for
		$\psi=f,g_{ij}$}.
	\]
	Denote $X^{U}=(X^{U,1},\cdots,X^{U,n})$ for $X=Y,Z,K$. Then
	
	\begin{lemma}
		\label{myq6} Suppose assumptions $(H_{1})-(H_{2})$ hold for some $\beta>1$.
		Then, for any $1<\alpha<\beta$, the $G$-BSDE \eqref{myq4} has a unique solution $(Y^{U},Z^{U},K^{U})$ in
		${S}_{G}^{\alpha}(t,t+h;\mathbb{R}^{n})\times H_{G}^{\alpha}(t,t+h;\mathbb{R}
		^{n\times d})\times\mathcal{A}_{G}^{\alpha}(t,t+h;\mathbb{R}^{n})$. Moreover, $Y^{U}\in{M}_{G}^{\beta}(t,t+h;\mathbb{R}
		^{n})$.
	\end{lemma}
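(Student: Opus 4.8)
The plan is to exploit the key structural feature of \eqref{myq4}: once the process $U$ is frozen in the off-diagonal $y$-arguments, the $n$ equations fully decouple. For each fixed $l$, the frozen generators $f^{l,U}(r,y^{l},z^{l})$ and $g_{ij}^{l,U}(r,y^{l},z^{l})$ depend on the unknowns only through $(Y^{U,l},Z^{U,l})$, the remaining coordinates entering solely through the given process $U$. Hence \eqref{myq4} is nothing but a family of $n$ \emph{independent} one-dimensional $G$-BSDEs, and it suffices to solve each one separately by invoking Theorem \ref{wellposeness of G-BSDE}.

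To apply Theorem \ref{wellposeness of G-BSDE} componentwise, I would first verify that each pair $f^{l,U},g_{ij}^{l,U}$ satisfies $(A_{1})$–$(A_{2})$ on $[t,t+h]$. Condition $(A_{2})$ is immediate: the two $y$-vectors inserted into $f^{l}$ differ only in the $l$-th slot, so $(H_{2})$ gives the one-dimensional Lipschitz bound with the same constant $L$. For $(A_{1})$ I fix deterministic $\bar y\in\mathbb{R}$, $\bar z\in\mathbb{R}^{d}$ and compare $f^{l,U}(\cdot,\bar y,\bar z)$ with $f^{l}(\cdot,\mathbf{e}_{l}\bar y,\bar z)$, where $\mathbf{e}_{l}\bar y$ has $\bar y$ in the $l$-th entry and zeros elsewhere. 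By $(H_{2})$ the difference is dominated by $L\sum_{k\neq l}|U^{k}|$, while $f^{l}(\cdot,\mathbf{e}_{l}\bar y,\bar z)\in M_{G}^{\beta}(t,t+h)$ by $(H_{1})$; since $U\in M_{G}^{\beta}(t,t+h;\mathbb{R}^{n})$, the dominating term is also in $M_{G}^{\beta}$, whence $f^{l,U}(\cdot,\bar y,\bar z)\in M_{G}^{\beta}(t,t+h)$, and similarly for $g_{ij}^{l,U}$. With $(A_{1})$–$(A_{2})$ verified and $\zeta^{l}\in L_{G}^{\beta}(\Omega_{t+h})$, Theorem \ref{wellposeness of G-BSDE} provides, for every $1<\alpha<\beta$, a unique triple $(Y^{U,l},Z^{U,l},K^{U,l})\in S_{G}^{\alpha}\times H_{G}^{\alpha}(t,t+h;\mathbb{R}^{d})\times\mathcal{A}_{G}^{\alpha}$; stacking the components yields the solution in the product spaces, and uniqueness holds componentwise.

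For the final assertion $Y^{U}\in M_{G}^{\beta}(t,t+h;\mathbb{R}^{n})$ I would apply the a priori estimate of Corollary \ref{pro3.6} to each one-dimensional component at the integrability exponent $\beta$, obtaining
\[
|Y_{s}^{U,l}|^{\beta}\leq C(\beta)\,\hat{\mathbb{E}}_{s}\Big[|\zeta^{l}|^{\beta}+\Big(\int_{s}^{t+h}h_{r}^{l}\,dr\Big)^{\beta}\Big],\qquad h_{r}^{l}=|f^{l,U}(r,0,0)|+\sum_{i,j=1}^{d}|g_{ij}^{l,U}(r,0,0)|.
\]
Taking $\hat{\mathbb{E}}$ of both sides, using the tower property $\hat{\mathbb{E}}[\hat{\mathbb{E}}_{s}[\cdot]]=\hat{\mathbb{E}}[\cdot]$ and sublinearity to integrate in $s$, and noting that $\zeta^{l}\in L_{G}^{\beta}$ while $h^{l}\in M_{G}^{\beta}$ (by the $(A_{1})$ verification above, together with Jensen to pass from $h^{l}$ to $(\int h^{l})^{\beta}$), yields $\hat{\mathbb{E}}[\int_{t}^{t+h}|Y_{s}^{U,l}|^{\beta}\,ds]<\infty$, i.e.\ $Y^{U,l}\in M_{G}^{\beta}(t,t+h)$.

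I expect the main obstacle to be this $\beta$-integrability of $Y^{U}$ rather than existence itself: the solution is produced natively only in $S_{G}^{\alpha}$ for $\alpha<\beta$ (the $Z$ and $K$ components cannot in general be controlled at level $\beta$), so the improvement to $M_{G}^{\beta}$ for the $Y$-component must be extracted from the pointwise bound \eqref{eq2.5}, and one must check that this estimate persists at $\alpha=\beta$ despite the lack of a priori $S_{G}^{\beta}$-membership of the solution; this is legitimate precisely because its right-hand side depends only on the $\beta$-integrable data $\zeta^{l}$ and $h^{l}$. A secondary technical point is the careful verification that the frozen coefficients genuinely lie in $M_{G}^{\beta}$ — exactly the place where the diagonal structure of the generators in $z$ and the hypothesis $U\in M_{G}^{\beta}$ are indispensable.
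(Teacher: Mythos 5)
Your overall strategy --- decouple \eqref{myq4} into $n$ scalar $G$-BSDEs, verify $(A_1)$--$(A_2)$ for the frozen coefficients, invoke Theorem \ref{wellposeness of G-BSDE} componentwise, and then upgrade $Y^{U}$ to $M_G^{\beta}$ via the pointwise estimate of Corollary \ref{pro3.6} at exponent $\beta$ --- is exactly the paper's. However, two of your steps rest on the same false principle: that membership in $M_G^{\beta}(t,t+h)$ can be read off from a domination or from finiteness of the norm. By Definition \ref{def2.6}, $M_G^{\beta}$ is the \emph{completion} of the step processes $M_G^{0}$ under $\Vert\cdot\Vert_{M_G^{\beta}}$, so a process with finite norm, or a process whose distance to an element of $M_G^{\beta}$ is dominated by another element of $M_G^{\beta}$, need not belong to the space.

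The first gap is in your verification of $(A_1)$: you argue that $|f^{l,U}(\cdot,\bar y,\bar z)-f^{l}(\cdot,\mathbf{e}_{l}\bar y,\bar z)|\le L\sum_{k\ne l}|U^{k}|$, with the comparison process in $M_G^{\beta}$ by $(H_1)$ and the dominating bound in $M_G^{\beta}$ because $U$ is, ``whence'' $f^{l,U}(\cdot,\bar y,\bar z)\in M_G^{\beta}$. That inference is invalid; one must actually exhibit an approximating sequence of step processes. This is precisely the content of Lemma \ref{MG lemma} in the paper, which uses a partition of unity $\{\phi_i^k\}$ to approximate $h(s,X_s)$ by $\sum_i\phi_i^k(X_s)h(s,x_i^k)$, each summand lying in $M_G^{p}$ by Theorem 4.7 of \cite{HW1}. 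The second gap is in the last assertion: you conclude $Y^{U,l}\in M_G^{\beta}$ from $\hat{\mathbb{E}}[\int_t^{t+h}|Y_s^{U,l}|^{\beta}\,ds]<\infty$, which again is not sufficient. The paper instead sets $\rho_s:=C^{\star}\hat{\mathbb{E}}_s[|\zeta^{l}|^{\beta}+(\int_s^{t+h}h_r^{l,0}\,dr)^{\beta}]$, observes that $\rho\in M_G^{1}(t,t+h)$ (because $\hat{\mathbb{E}}_s[|\eta|]\in M_G^{1}$ for $\eta\in L_G^{1}(\Omega_{t+h})$), and invokes the uniform-integrability characterization of Theorem 4.7 in \cite{HW1}: since $Y^{U,l}\in S_G^{\alpha}\subset M_G^{\alpha}$ and $\hat{\mathbb{E}}[\int_t^{t+h}|Y_s^{U,l}|^{\beta}\mathbf{1}_{\{|Y_s^{U,l}|^{\beta}\ge N\}}\,ds]\le\hat{\mathbb{E}}[\int_t^{t+h}\rho_s\mathbf{1}_{\{\rho_s\ge N\}}\,ds]\to 0$, it follows that $Y^{U,l}\in M_G^{\beta}(t,t+h)$. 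Both gaps are repairable with these tools, but as written your argument does not close.
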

	
	\begin{proof}
		For each $l\geq1$, with the help of assumptions $(H_{1}),(H_{2})$ and Lemma
		\ref{MG lemma} below, we have $f^{l,U}(s,y^{l},z^{l}),g^{l,U}_{ij}
		(s,y^{l},z^{l})\in M^{\beta}_{G}(t,t+h)$ for each $y^{l}\in\mathbb{R}
		,z^{l}\in\mathbb{R}^{d}$. Then applying Theorem \ref{wellposeness of G-BSDE}
		to components of $G$-BSDE (\ref{myq4}) yields the existence and uniqueness
		result of solutions to \eqref{myq4}.
		
		Now we shall prove the second part. Let $1\leq l\leq n$ be given. By
		Corollary  \ref{pro3.6}, we can find some constant $C^{\star}$ depending on
		$T,G,\beta$  and $L$ such that
		\[
		|Y_{s}^{U,l}|^{\beta}\leq C^{\star}\mathbb{\hat{E}}_{s}[|{\zeta}^{l}|^{\beta
		}+(\int_{s}^{t+h}|{h}_{r}^{l,0}|dr)^{\beta}]:=\rho_{s},\ \forall s\in\lbrack
		t,t+h],
		\]
		where $h_{s}^{l,0}=|f^{l,U}(s,0,0)|+|g_{ij}^{l,U}(s,0,0)|$. Note that $\eta\in
		L_{G}^{1}(\Omega_{t+h})$ implies $\mathbb{\hat{E}}_{s}[|\eta|]\in M_{G}
		^{1}(t,t+h)$ from a standard approximation argument. Then we have $\rho_{s}\in
		M_{G}^{1}(t,t+h).$ Now recalling Theorem 4.7 in \cite{HW1}, we conclude that
		\[
		\lim\limits_{N\rightarrow\infty}\mathbb{\hat{E}}[\int_{t}^{t+h}|Y_{s}
		^{U,l}|^{\beta}|\mathbf{1}_{\{|Y_{s}^{l}|^{\beta}\geq N\}}]\leq\lim
		\limits_{N\rightarrow\infty}\mathbb{\hat{E}}[\int_{t}^{t+h}\rho_{s}
		\mathbf{1}_{\{\rho_{s}\geq N\}}]=0,
		\]
		which, together with the fact that $(Y_{s}^{U,l})_{t\leq s\leq t+h}\in{S}
		_{G}^{\alpha}(t,t+h)\subset{M}_{G}^{\alpha}(t,t+h)$, indicates that
		$Y^{U,l}\in{M}_{G}^{\beta}(t,t+h).$ The proof is complete.
	\end{proof}
	
	\begin{lemma}
		\label{MG lemma} Suppose $p\geq1$. Given a function $h:[t,t+h]\times
		\Omega_{T}  \times\mathbb{R}^{m}\rightarrow\mathbb{R}$ satisfying
		$h(\cdot,\cdot,x)\in M^{p}_{G}(t,t+h)$ for each $x\in\mathbb{R}^{m}$. Assume
		that $h$ is Lipschitz  continuous in $x$ with some coefficient $L$, uniformly
		in $(t,\omega)$. Then  for any $X\in M^{p}_{G}(t,t+h;\mathbb{R}^{m})$, we have
		$(h(s,X_{s}))_{t\leq s\leq t+h}\in M^{p}_{G}(t,t+h)$.
	\end{lemma}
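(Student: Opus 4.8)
The plan is to obtain membership in $M^p_G(t,t+h)$ through three nested approximations, all controlled by the single quantitative input of the lemma, the Lipschitz bound $|h(s,\omega,x)-h(s,\omega,x')|\le L|x-x'|$. Two consequences of this bound set up everything. First, taking $x'=0$ gives $|h(s,X_s)|\le|h(s,\omega,0)|+L|X_s|$, so that $\hat{\mathbb{E}}[\int_t^{t+h}|h(s,X_s)|^p\,ds]\le C(\hat{\mathbb{E}}[\int_t^{t+h}|h(s,\omega,0)|^p\,ds]+L^p\hat{\mathbb{E}}[\int_t^{t+h}|X_s|^p\,ds])<\infty$, using $h(\cdot,\cdot,0)\in M^p_G$ and $X\in M^p_G$; hence $h(\cdot,X_\cdot)$ at least has finite $\|\cdot\|_{M^p_G}$-norm. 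Second, for two processes one gets $\|h(\cdot,X_\cdot)-h(\cdot,Y_\cdot)\|_{M^p_G}\le L\|X-Y\|_{M^p_G}$, i.e. $X\mapsto h(\cdot,X_\cdot)$ is $L$-Lipschitz for the $M^p_G$-norm.

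First reduction, to simple $X$. I would show that the set of $X\in M^p_G(t,t+h;\mathbb{R}^m)$ for which $h(\cdot,X_\cdot)\in M^p_G(t,t+h)$ is closed: if $X^{(k)}\to X$ in $M^p_G$ with each $h(\cdot,X^{(k)}_\cdot)\in M^p_G$, then by the Lipschitz estimate $h(\cdot,X^{(k)}_\cdot)$ is Cauchy, hence converges in the complete space $M^p_G$ to some $\Theta$; the same estimate and the finite-norm bound force $\Theta=h(\cdot,X_\cdot)$ (both are limits of the same sequence in the ambient norm), so $h(\cdot,X_\cdot)\in M^p_G$. Since $M^0_G(t,t+h;\mathbb{R}^m)$ is dense in $M^p_G(t,t+h;\mathbb{R}^m)$ by Definition \ref{def2.6}, it suffices to treat a simple process $X=\sum_j\xi_j\mathbf{1}_{[t_j,t_{j+1})}$ with $\xi_j\in L_{ip}(\Omega_{t_j};\mathbb{R}^m)$. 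Then $h(s,X_s)=\sum_j h(s,\xi_j)\mathbf{1}_{[t_j,t_{j+1})}(s)$, and since a concatenation of $M^p_G$-processes on the subintervals again lies in $M^p_G(t,t+h)$, the problem localises to showing $(h(s,\xi))_{s\in[a,b]}\in M^p_G(a,b)$ for a single bounded $\xi\in L_{ip}(\Omega_a;\mathbb{R}^m)$.

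Second reduction, approximation in $x$. Since $\xi=\varphi(B_{r_1},\dots,B_{r_k})$ with $\varphi\in C_{b.Lip}$, we have $|\xi|\le R$ q.s. with $R=\|\varphi\|_\infty$. On a uniform grid $\{x_i\}$ of $[-R,R]^m$ of mesh $\delta$ I would use the tent (multilinear) basis $\{\lambda_i\}$, a Lipschitz partition of unity with $\sum_i\lambda_i\equiv1$, and set $h^\delta(s,\omega,x)=\sum_i h(s,\omega,x_i)\lambda_i(x)$. The Lipschitz-in-$x$ hypothesis gives $|h^\delta(s,\omega,x)-h(s,\omega,x)|\le L\sqrt{m}\,\delta$ uniformly, whence $\|h^\delta(\cdot,\xi)-h(\cdot,\xi)\|_{M^p_G}\le L\sqrt{m}\,\delta\,(b-a)^{1/p}\to0$. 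It then remains to check $h^\delta(s,\xi)=\sum_i h(s,x_i)\lambda_i(\xi)\in M^p_G(a,b)$, a finite sum, and the point is that $\lambda_i\circ\varphi\in C_{b.Lip}$, so each multiplier $\lambda_i(\xi)\in L_{ip}(\Omega_a)$ is a bounded $\Omega_a$-measurable random variable. This reduces matters to the elementary multiplier fact: if $\eta\in M^p_G(a,b)$ and $\zeta\in L_{ip}(\Omega_a)$ is bounded, then $\zeta\eta\in M^p_G(a,b)$; indeed, for simple $\eta=\sum_l\theta_l\mathbf{1}_{[s_l,s_{l+1})}$ each $\zeta\theta_l\in L_{ip}(\Omega_{s_l})$ (a product of $C_{b.Lip}$-functionals of $B$), so $\zeta\eta$ is again simple, and $\|\zeta\eta-\zeta\eta^{(k)}\|_{M^p_G}\le\|\zeta\|_\infty\|\eta-\eta^{(k)}\|_{M^p_G}$ passes to the completion. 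Applying this with $\eta=h(\cdot,x_i)\in M^p_G(a,b)$ and $\zeta=\lambda_i(\xi)$ closes the chain.

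The main obstacle is conceptual, not computational: having finite $\|\cdot\|_{M^p_G}$-norm does not by itself place $h(\cdot,X_\cdot)$ in $M^p_G$, since $M^p_G$ is the completion of the simple processes and is genuinely smaller than the space of all finite-norm processes in the $G$-setting. The whole difficulty is to remain inside this completion while feeding a truly random argument $X_s$ into $h$, and the device that resolves it is to transfer the randomness onto $L_{ip}$-coefficients via the smooth partition of unity, reducing to the multiplier fact above. The remaining ingredients — the finite-norm bound, the Lipschitz stability, and the interpolation error estimate — are routine.
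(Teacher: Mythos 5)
Your proof is correct, but it takes a genuinely different route from the paper's. The paper keeps the general $X\in M^p_G$ throughout: it applies a partition of unity $\{\phi_i^k\}$ in the $x$-variable evaluated at the random point $X_s$, invokes Theorem 4.7 of the quasi-continuity paper \cite{HW1} to conclude that each product $\phi_i^k(X_s)h(s,x_i^k)$ lies in $M^p_G$, bounds the interpolation error by $L/k+(L|X_s|+h(s,0))I_{\{|X_s|\geq N\}}$, and invokes \cite{HW1} a second time to kill the truncated tail as $N\to\infty$. You instead perform two preliminary reductions — first to simple $X$ via the density of $M^0_G$ together with the $L$-Lipschitz stability of $X\mapsto h(\cdot,X_\cdot)$ in the integral norm (which makes the good set closed), then to a single bounded $L_{ip}$ coefficient $\xi$ on a subinterval via concatenation — after which the partition-of-unity step degenerates into a finite sum $\sum_i h(\cdot,x_i)\lambda_i(\xi)$ handled by the elementary multiplier fact that a bounded $\zeta\in L_{ip}(\Omega_a)$ times an $M^p_G(a,b)$ process stays in $M^p_G(a,b)$. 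What your route buys is self-containedness: it needs only the definition of $M^p_G$ as a completion and the closure of $C_{b.Lip}$ under products and compositions, and because simple-process coefficients are bounded you never need the truncation at radius $N$ or any external result on quasi-continuous functions of $M^p_G$ processes. What the paper's route buys is brevity: granting \cite{HW1}, it is a one-step estimate with no reduction machinery. Both arguments are sound; the small points worth writing out carefully in yours are the identification of the $M^p_G$-limit with $h(\cdot,X_\cdot)$ inside the ambient finite-norm space in the closedness step, and the observation that restricting an $M^p_G(t,t+h)$ process to a subinterval $[a,b]$ lands in $M^p_G(a,b)$.
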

	
	\begin{proof}
		According to the partition of unit theorem, for each integer $k\geq1$, we can
		find a sequence of continuous functions $\{\phi_{i} ^{k}\}_{i=1}^{\infty}$
		such that the diameter of support $\lambda($supp$(\phi_{i}^{k}))\leq1/k$ and
		$0\leq\phi_{i}^{k} \leq1$. Moreover, for any integer $N\geq1$, there exists a
		constant $k_{N}$ such that
		\[
		I_{B_{N}(0)}(x)\leq\sum_{i=1}^{k_{N}}\phi_{i}^{k}(x)\leq1,
		\]
		where $B_{N}(0)$ denotes the open ball centered at $0$ with radius $N$.
		Choose $x_{i}^{k}\in\mathbb{R}^{m}$ such that $\phi_{i}^{k}(x_{i}^{k})>0$. By
		Theorem 4.7 in \cite{HW1}, we have $(\phi_{i}^{k}(X_{s})h(s,x^{k}_{i}))_{t\leq
			s\leq t+h}\in M^{p}_{G}(t,t+h)$, which implies that
		\[
		(\sum_{i=1}^{k_{N}}\phi_{i}^{k}(X_{s})h(s,x^{k}_{i}))_{t\leq s\leq t+h}\in
		M^{p}_{G}(t,t+h).
		\]
		Note that
		\begin{equation}%
		\begin{split}
		\left\vert \sum_{i=1}^{k_{N}}\phi_{i}^{k}(X_{s})h(s,x_{i}^{k})-h(s,X_{s}
		)\right\vert  &  \leq\sum_{i=1}^{k_{N}}\phi_{i}^{k}(X_{s})\left\vert
		h(s,x_{i}^{k})-h(s,X_{s})\right\vert +\left(  1-\sum_{i=1}^{k_{N}}\phi_{i}
		^{k}(X_{s})\right)  h(s,X_{s})\\
		&  \leq L\sum_{i=1}^{k_{N}}\phi_{i}^{k}(X_{s})|X_{s}-x_{i}^{k}|+I_{\{(B_{N}
			(0))^{c}\}}(X_{s})h(s,X_{s})\\
		&  \leq L\sum_{i=1}^{k_{N}}\phi_{i}^{k}(X_{s})\frac1k+(L|X_{s}|+h(s,0))I_{\{
			|X_{s}|\geq N\}}\\
		&  \leq L\frac1k+(L|X_{s}|+h(s,0))I_{\{ |X_{s}|\geq N\}}.
		\end{split}
		\nonumber
		\end{equation}
		Consequently, applying Theorem 4.7 in \cite{HW1} again, we get that
		\[
		\mathbb{\hat{E}}[\int_{t}^{t+h}| \sum_{i=1}^{k_{N}}\phi_{i}^{k}(X_{s}
		)h(s,x_{i}^{k})-h(s,X_{s})|^{p}ds]\rightarrow0,\ \ \ \ \text{as}
		\ k,N\rightarrow\infty,
		\]
		which implies the desired result.
	\end{proof}
	
	In the view of Lemma \ref{myq6}, we can define the solution map $\Gamma:
	U\rightarrow\Gamma(U)$ from ${M}^{\beta}_{G}(t,t+h;\mathbb{R}^{n})$ to
	${M}^{\beta}_{G}(t,t+h;\mathbb{R}^{n})$ by
	\[
	\Gamma(U):=Y^{U}, \ \ \ \ \forall U\in{M}^{\beta}_{G}(t,t+h;\mathbb{R}^{n}).
	\]
	The following lemma shows that the solution map $\Gamma$ is a contraction
	whenever $h$ is small enough.
	
	\begin{lemma}
		\label{myq9} Assume $(H_{1})-(H_{2})$ hold for some $\beta>1$. Then there is
		a constant $\delta>0$ depending only on $T,G,n, \beta$ and $L$ such that for
		any $h\in(0,\delta]$, we have
		\[
		\|{Y}^{U}-{Y}^{\bar{U}}\|_{M^{\beta}_{G}(t,t+h;\mathbb{R}^{n})}\leq\frac
		{1}{2}  \|U-\bar{U}\|_{M^{\beta}_{G}(t,t+h;\mathbb{R}^{n})},\ \ \ \ \forall U,
		\bar{U}\in M^{\beta}_{G}(t,t+h;\mathbb{R}^{n}).
		\]
		
	\end{lemma}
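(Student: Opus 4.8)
The plan is to fix $U,\bar{U}\in M^\beta_G(t,t+h;\mathbb{R}^n)$, denote by $(Y^U,Z^U,K^U)$ and $(Y^{\bar U},Z^{\bar U},K^{\bar U})$ the solutions of \eqref{myq4} supplied by Lemma \ref{myq6}, and set $\hat{Y}=Y^U-Y^{\bar U}$, $\hat{U}=U-\bar{U}$. For each fixed $l$ the scalar processes $Y^{U,l}$ and $Y^{\bar U,l}$ solve one-dimensional $G$-BSDEs with the \emph{same} terminal value $\zeta^l$ but with generators $(f^{l,U},g^{l,U}_{ij})$ and $(f^{l,\bar U},g^{l,\bar U}_{ij})$ respectively. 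The key idea is to invoke the one-dimensional a priori estimate \eqref{my101} of Proposition \ref{pro3.5} componentwise with exponent $\beta$ (legitimate since $Y^U,Y^{\bar U}\in M^\beta_G$ by Lemma \ref{myq6} and the data below are $\beta$-integrable), observing that the terminal difference $\hat{\xi}^l=\zeta^l-\zeta^l=0$ drops out. This gives, for every $s\in[t,t+h]$,
\[
|\hat{Y}^l_s|^\beta \leq C(\beta)\,\mathbb{\hat{E}}_s\Big[\Big(\int_s^{t+h}\hat{h}^l_r\,dr\Big)^\beta\Big],
\]
where $\hat{h}^l_r$ gathers the differences of the two sets of generators evaluated at the common point $(Y^{\bar U,l}_r,Z^{\bar U,l}_r)$.

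Next I would estimate $\hat{h}^l_r$. The only discrepancy between $f^{l,U}$ and $f^{l,\bar U}$ (and likewise between $g^{l,U}_{ij}$ and $g^{l,\bar U}_{ij}$) lies in the frozen arguments $U_r$ versus $\bar{U}_r$ in the components other than the $l$-th, while the $y^l$- and $z^l$-slots coincide. Hence assumption $(H_2)$, applied with the two $y$-vectors differing only off the $l$-th coordinate, yields directly $\hat{h}^l_r\leq L|\hat{U}_r|$, with $|\hat{U}_r|$ the full Euclidean norm of $U_r-\bar{U}_r$. Inserting this and using Hölder's inequality in time, $(\int_s^{t+h}L|\hat{U}_r|\,dr)^\beta\leq L^\beta h^{\beta-1}\int_t^{t+h}|\hat{U}_r|^\beta\,dr$, I obtain $|\hat{Y}^l_s|^\beta\leq C(\beta)L^\beta h^{\beta-1}\,\mathbb{\hat{E}}_s[\int_t^{t+h}|\hat{U}_r|^\beta\,dr]$.

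To pass to the $M^\beta_G$-norm I would integrate in $s$ over $[t,t+h]$ and apply $\mathbb{\hat{E}}$. The decisive step is the Fubini-type inequality
\[
\mathbb{\hat{E}}\Big[\int_t^{t+h}\mathbb{\hat{E}}_s[\Xi]\,ds\Big] \leq h\,\mathbb{\hat{E}}[\Xi], \qquad \Xi := \int_t^{t+h}|\hat{U}_r|^\beta\,dr \geq 0,
\]
which follows by writing $\mathbb{\hat{E}}=\sup_{P\in\mathcal{P}}E_P$, exchanging $E_P$ with the $ds$-integral by classical Fubini, and using $E_P[\mathbb{\hat{E}}_s[\Xi]]\leq\mathbb{\hat{E}}[\mathbb{\hat{E}}_s[\Xi]]=\mathbb{\hat{E}}[\Xi]$ by the tower property of the $G$-expectation. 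Combined with the elementary vector inequality $|\hat{Y}_s|^\beta\leq n^{\beta-1}\sum_{l=1}^n|\hat{Y}^l_s|^\beta$ and summation over $l$, this produces $\|\hat{Y}\|^\beta_{M^\beta_G}\leq n^\beta C(\beta)L^\beta h^\beta\|\hat{U}\|^\beta_{M^\beta_G}$, i.e. $\|\hat{Y}\|_{M^\beta_G}\leq nC(\beta)^{1/\beta}Lh\,\|\hat{U}\|_{M^\beta_G}$. Choosing $\delta=\min\{T,(2nC(\beta)^{1/\beta}L)^{-1}\}$—which depends only on $T,G,n,\beta,L$ through the constant $C(\beta)$ of Proposition \ref{pro3.5}—then forces the contraction factor $\tfrac12$ for all $h\in(0,\delta]$.

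I expect the main obstacle to be the careful treatment of the $G$-expectation in the last step: unlike the linear case one cannot freely interchange expectation and time-integration, so the Fubini-type inequality together with the tower property must be invoked, and the componentwise reduction to the scalar estimates must be organized so that the dependence of the constant on $n$ remains transparent. A secondary point to verify is the legitimacy of using exponent $\beta$ (rather than some $\alpha<\beta$) in \eqref{my101}; this is precisely where the additional regularity $Y^U\in M^\beta_G$ from Lemma \ref{myq6} is needed.
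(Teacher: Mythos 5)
Your proposal is correct and follows essentially the same route as the paper: apply the one-dimensional estimate \eqref{my101} of Proposition \ref{pro3.5} componentwise with exponent $\beta$ (the terminal difference vanishing), bound the generator difference by $L|U_r-\bar U_r|$ via $(H_2)$, use H\"older in time, sum over components with an $\ell^2$--$\ell^\beta$ norm comparison, and integrate in $s$ (your Fubini/tower step is the same computation the paper performs via subadditivity of $\hat{\mathbb{E}}$ under the $ds$-integral) to extract the factor $h$ and choose $\delta$ accordingly. The only differences are cosmetic constants ($n^{\beta-1}$ versus the paper's sharper $n^{\max(\beta/2-1,0)}$), which do not affect the result.
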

	
	\begin{proof}
		For each $1\leq l\leq n$, recall that $Y^{U,l}$ and $Y^{\bar{U},l}$ are the
		$l$-th components of $Y^{U}$ and $Y^{\bar{U}}$, respectively. Applying
		Proposition \ref{pro3.5} to $(Y^{U,l}-{Y}^{\bar{U},l})$ yields that
		\[
		|Y_{s}^{U,l}-{Y}_{s}^{\bar{U},l}|^{\beta}\leq C^{\star}\mathbb{\hat{E}}
		_{s}[(\int_{s}^{t+h}\hat{h}_{r}dr)^{\beta}],\ \ \forall s\in\lbrack t,t+h],
		\]
		where
		\[
		\hat{h}_{s}=|f^{l,U}(s,Y_{s}^{\bar{U},l},Z_{s}^{\bar{U},l})-f^{l,\bar{U}
		}(s,Y_{s}^{\bar{U},l},Z_{s}^{\bar{U},l})|+\sum_{i,j=1}^{d}|g_{ij}
		^{l,U}(s,Y_{s}^{\bar{U},l},Z_{s}^{\bar{U},l})-g_{ij}^{l,\bar{U}}(s,Y_{s}
		^{\bar{U},l},Z_{s}^{\bar{U},l})|
		\]
		and $C^{\star}$ is a constant depending on $T,G,\beta$ and $L$. From
		assumption $(H_{2})$ and the H\"{o}lder's inequality, we then get
		\[
		\mathbb{\hat{E}}[|Y_{s}^{U,l}-{Y}_{s}^{\bar{U},l}|^{\beta}]\leq C^{\star
		}L^{\beta}h^{\beta-1}\mathbb{\hat{E}}[\int_{t}^{t+h}|U_{r}-\bar{U}_{r}
		|^{\beta}dr],\ \ \forall s\in\lbrack t,t+h].
		\]

		Note that for each $y_{1},y_{2}\in\mathbb{R}^{n}$,
		\[
		|y_{1}-y_{2}|^{\beta}=(\sum\limits_{l=1}^{n}|y_{1}^{l}-y_{2}^{l}|^{2}
		)^{\frac{\beta}{2}}\leq n^{\max(\frac{\beta}{2}-1,0)}(\sum\limits_{l=1}
		^{n}|y_{1}^{l}-y_{2}^{l}|^{\beta}).
		\]
		Then we derive that for $s\in\lbrack t,t+h]$,
		\[
		\mathbb{\hat{E}}[|Y_{s}^{U}-{Y}_{s}^{\bar{U}}|^{\beta}]\leq n^{\max
			(\frac{\beta}{2}-1,0)}\sum\limits_{l=1}^{n}\mathbb{\hat{E}}[|Y_{s}^{U,l}
		-{Y}_{s}^{\bar{U},l}|^{\beta}]\leq n^{\max(\frac{\beta}{2},1)}C^{\star
		}L^{\beta}h^{\beta-1}\mathbb{\hat{E}}[\int_{t}^{t+h}|U_{r}-\bar{U}_{r}
		|^{\beta}dr].
		\]
		Consequently,
		\[
		\Vert{Y}^{U}-{Y}^{\bar{U}}\Vert_{M_{G}^{\beta}(t,t+h;\mathbb{R}^{n})}\leq
		|\int_{t}^{t+h}\mathbb{\hat{E}}[|Y_{s}^{U}-{Y}_{s}^{\bar{U}}|^{\beta
		}]ds|^{\frac{1}{\beta}}\leq n^{\max(\frac{1}{2},\frac{1}{\beta})}|C^{\star
		}|^{\frac{1}{\beta}}Lh\Vert U-\bar{U}\Vert_{M_{G}^{\beta}(t,t+h;\mathbb{R}
			^{n})}.
		\]

		By setting
		\begin{equation}
		\label{my104}\delta:=\min\left(  \frac{1}{2n^{\max(\frac{1}{2},\frac{1}{\beta
				})}|C^{\star}|^{\frac{1}{\beta}}L}, T\right)  ,
		\end{equation}
		we will have, for each $h\in(0,\delta]$,
		\[
		\|{Y}^{U}-{Y}^{\bar{U}}\|_{M^{\beta}_{G}(t,t+h;\mathbb{R}^{n})}\leq\frac
		{1}{2}  \|U-\bar{U}\|_{M^{\beta}_{G}(t,t+h;\mathbb{R}^{n})},
		\]
		as desired.
	\end{proof}
	
	Now we are going to use the method of Picard iteration to obtain the existence
	and uniqueness of local solutions as following, which is also crucial for the
	establishment of comparison theorem in the sequel.
	
	\begin{proof}
		[The proof of Theorem \ref{my16}]For symbol simplicity, we assume that
		$g_{ij}\equiv0$, and the proof still holds for the general case. In the
		following, unless otherwise stated, $C$ will always denote a generic constant
		which may change from line to line. The proof shall be divided into three steps
		
		\textit{1 The convergence of $Y$.} We take $\delta$ as in Lemma \ref{myq9}.
		For any given $0\leq h\leq\delta$, let $Y_{s}^{(0)}\equiv0$ and $Y_{s}
		^{(i+1)}:=Y_{s}^{Y^{(i)}}$ be iterated defined by the following
		multi-dimensional $G$-BSDE on $[t,t+h]$:
		\begin{equation}
		Y_{s}^{(i+1),l}=\zeta^{l}+\int_{s}^{t+h}f^{l,Y^{(i)}}(r,Y_{r}^{(i+1),l}
		,Z_{r}^{(i+1),l})dr-\int_{s}^{t+h}Z_{r}^{(i+1),l}dB_{r}-(K_{t+h}
		^{(i+1),l}-K_{s}^{(i+1),l}), \ 1\leq l\leq n. \label{Myeq3.3}%
		\end{equation}
		Recalling Lemma \ref{myq9}, we can get that for each $i$,
		\[
		\Vert{Y}^{(i+1)}-{Y}^{(i)}\Vert_{M_{G}^{\beta}(t,t+h;\mathbb{R}^{n})}\leq
		\frac{1}{2}\Vert{Y}^{(i)}-{Y}^{(i-1)}\Vert_{M_{G}^{\beta}(t,t+h;\mathbb{R}
			^{n})}\leq\cdots\leq\frac{1}{2^{i}}\Vert{Y}^{(1)}-{Y}^{(0)}\Vert_{M_{G}
			^{\beta}(t,t+h;\mathbb{R}^{n})},
		\]
		from which, we derive that ${Y}^{(i)}$ is a Cauchy sequence in ${M_{G}^{\beta
			}(t,t+h;\mathbb{R}^{n})}$.
		
		For each $1\leq l\leq n$, applying the estimate \eqref{my101} in Proposition
		\ref{pro3.5} to ${Y}^{(i+1),l}-{Y}^{(j+1),l}$, we get
		\[
		|{Y}_{s}^{(i+1),l}-{Y}_{s}^{(j+1),l}|^{\alpha}\leq C\mathbb{\hat{E}}_{s}%
		[(\int_{s}^{t+h}\hat{h}_{r}^{l}dr)^{\alpha}]\leq C\mathbb{\hat{E}}_{s}%
		[(\int_{t}^{t+h}\left\vert Y_{r}^{(i)}-Y_{r}^{(j)}\right\vert dr)^{\alpha}],
		\]
		where $\hat{h}_{s}^{l}=|f^{l,Y^{(i)}}(s,Y_{s}^{(i+1),l},Z_{s}^{(i+1),l}%
		)-f^{l,Y^{(j)}}(s,Y_{s}^{(i+1),l},Z_{s}^{(i+1),l})|.$ Then it follows from
		Theorem \ref{the2.10} that
		\begin{equation}%
		\begin{split}
		\mathbb{\hat{E}}[\sup_{t\leq s\leq t+h}|{Y}_{s}^{(i+1),l}-Y_{s}^{(j+1),l}%
		|^{\alpha}] &  \leq C\mathbb{\hat{E}}[\sup_{t\leq s\leq t+h}\mathbb{\hat{E}%
		}_{s}[(\int_{t}^{t+h}\left\vert Y_{r}^{(i)}-Y_{r}^{(j)}\right\vert
		dr)^{\alpha}]]\\
		&  \leq C\mathbb{\hat{E}}[(\int_{t}^{t+h}\left\vert Y_{r}^{(i)}-Y_{r}%
		^{(j)}\right\vert dr)^{\beta}])^{\frac{\alpha}{\beta}}\\
		&  \leq C\mathbb{\hat{E}}[\int_{t}^{t+h}\left\vert Y_{r}^{(i)}-Y_{r}%
		^{(j)}\right\vert ^{\beta}dr])^{\frac{\alpha}{\beta}}.
		\end{split}
		\nonumber
		\end{equation}
		Consequently, we obtain that ${Y}^{(i)}$ is a Cauchy sequence in
		${S_{G}^{\alpha}(t,t+h;\mathbb{R}^{n})}$. In particular, there exists some
		process $Y$ such that ${Y}^{(i)}$ converges to $Y$ in ${S_{G}^{\alpha
			}(t,t+h;\mathbb{R}^{n})}$. Moreover, we have
		\begin{equation}
		\Vert{Y}^{(i)}\Vert_{{S}_{G}^{\alpha}(t,t+h;\mathbb{R}^{n})}\leq
		C,\ \text{uniformly for all}\ i.\label{my105}%
		\end{equation}

		\textit{2 The existence.} Now applying the estimate (\ref{my102}) in
		Proposition \ref{pro2.9} to ${Z}^{(i+1),l}-{Z}^{(j+1),l}$, we derive that
		\begin{equation}%
		\begin{split}
		&  \mathbb{\hat{E}}[(\int_{t}^{t+h}|{Z}_{s}^{(i+1),l}-Z_{s}^{(j+1),l}%
		|^{2}ds)^{\frac{\alpha}{2}}]\\
		&  \leq C\{\Vert{Y}^{(i+1),l}-Y^{(j+1),l}\Vert_{{S}_{G}^{\alpha}
			(t,t+h;\mathbb{R}^{n})}^{\alpha}+\Vert{Y}^{(i+1),l}-Y^{(j+1),l}\Vert_{{S}
			_{G}^{\alpha}(t,t+h;\mathbb{R}^{n})}^{\frac{\alpha}{2}}[||{Y}^{(i+1),l}%
		||_{{S}_{G}^{\alpha}(t,t+h;\mathbb{R}^{n})}^{\frac{\alpha}{2}}\\
		&  +||{Y}^{(j+1),l}||_{{S}_{G}^{\alpha}(t,t+h;\mathbb{R}^{n})}^{\frac{\alpha
			}{2}}+||\int_{t}^{t+h}|f^{l,Y^{(i)}}(s,0,0)|ds||_{L_{G}^{\alpha}(\Omega
			_{t+h};\mathbb{R}^{n})}^{\frac{\alpha}{2}}+||\int_{t}^{t+h}|f^{l,Y^{(j)}%
		}(s,0,0)|ds||_{L_{G}^{\alpha}(\Omega_{t+h};\mathbb{R}^{n})}^{\frac{\alpha}{2}%
		}]\}\\
		&  \leq C\{\Vert{Y}^{(i+1),l}-Y^{(j+1),l}\Vert_{{S}_{G}^{\alpha}
			(t,t+h;\mathbb{R}^{n})}^{\alpha}+\Vert{Y}^{(i+1),l}-Y^{(j+1),l}\Vert_{{S}
			_{G}^{\alpha}(t,t+h;\mathbb{R}^{n})}^{\frac{\alpha}{2}}\},
		\end{split}
		\nonumber
		\end{equation}
		where we have used the assumptions $(H_{1})-(H_{2})$ and the estimate
		\eqref{my105} in the last inequality. Therefore, it follows from Step 1 that
		$Z^{(i)}$ is a Cauchy sequence in $H_{G}^{\alpha}(t,t+h;\mathbb{R}^{n\times
			d})$, and thus converges to some $Z\in H_{G}^{\alpha}(t,t+h;\mathbb{R}
		^{n\times d}).$
		
		For each $1\leq l\leq n$, we define
		\[
		K_{s}^{l}=Y_{s}^{l}+\int_{t}^{s}f^{l}(r,Y_{r},Z_{r}^{l})dr-\int_{t}^{s}
		Z_{r}^{l}dB_{r}-Y_{t}^{l}, \ \forall t\leq s\leq t+h.
		\]
		Then it is easy to check that $K^{(i)}_{s}$ converges to $K_{s}$ in
		$L^{\alpha}_{G}(\Omega_{s}; \mathbb{R}^{n})$ and $K\in\mathcal{A}_{G}
		^{\alpha}(t,t+h;\mathbb{R}^{n})$. This prove the existence.
		
		\textit{3 The uniqueness.} Let $(Y^{\prime},Z^{\prime},K^{\prime})$ be
		another solution of equation (\ref{my1-1}). Note that $Y,Y^{\prime}\in
		{M_{G}^{\beta}(t,t+h;\mathbb{R}^{n})}$ are both fixed points of the mapping
		$\Gamma$, then Lemma \ref{myq9} implies that $Y=Y^{\prime}$. Thus $(Y,Z,K)$
		and $(Y^{\prime},Z^{\prime},K^{\prime})$ can be viewed as the solution to the
		following $G$-BSDE:
		\[
		\label{Myeq3.15}
		\begin{split}
		\bar{Y}_{s}^{l}  &  =\zeta^{l}+\int^{t+h}_{s} f^{l}(r,Y_{r},\bar{Z}_{r}^{l})dr
		-\int^{t+h}_{s} \bar{Z}_{r}^{l} dB_{r}-(\bar{K}_{t+h}^{l}-\bar{K}_{s}
		^{l}),\ t\leq s\leq t+h,\ 1\leq l\leq n.
		\end{split}
		\]
		Therefore, applying Theorem \ref{wellposeness of G-BSDE} to components of the
		above equation yields that $(Z, K)=(Z^{\prime},K^{\prime})$. The proof is  complete.
	\end{proof}
	
	Now we are in a position to state the well-posedness for $G$-BSDE (\ref{my1})
	on the whole interval $[0,T]$.
	
	\begin{theorem}
		\label{my17}  Suppose $(H_{1})-(H_{2})$ are satisfied for some $\beta>1$. Then
		for any $1<\alpha<\beta$, the $G$-BSDE (\ref{my1})  has a unique  solution
		$(Y,Z,K)\in{S}_{G}^{\alpha}(0,T;\mathbb{R}^{n})\times H^{\alpha}%
		_{G}(0,T;\mathbb{R}^{n\times d})\times\mathcal{A}^{\alpha}_{G}(0,T;\mathbb{R}%
		^{n})$.  Moreover, $Y\in{M}_{G}^{\beta}(0,T;\mathbb{R}^{n}). $
	\end{theorem}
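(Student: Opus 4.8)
The plan is to build the global solution by a backward concatenation of the local solutions produced by Theorem \ref{my16}. Fix $1<\alpha<\beta$, take the step length $\delta=\delta(T,G,n,\beta,L)>0$ furnished by Theorem \ref{my16}, and choose a partition $0=t_{0}<t_{1}<\cdots<t_{N}=T$ with $t_{k+1}-t_{k}\leq\delta$ for every $k$. I would then run a finite backward induction over the subintervals $[t_{k-1},t_{k}]$, starting from $k=N$ and descending to $k=1$.

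On the last interval $[t_{N-1},T]$ the terminal datum is $\zeta=\xi\in L_{G}^{\beta}(\Omega_{T};\mathbb{R}^{n})$, so Theorem \ref{my16} gives a unique solution $(Y,Z,K)$ there with $Y\in M_{G}^{\beta}(t_{N-1},T;\mathbb{R}^{n})$. The decisive point is that the left endpoint value $Y_{t_{N-1}}$ again lies in $L_{G}^{\beta}(\Omega_{t_{N-1}};\mathbb{R}^{n})$, since only then may Theorem \ref{my16} be reapplied on $[t_{N-2},t_{N-1}]$ with terminal condition $\zeta=Y_{t_{N-1}}$. I would obtain the required integrability from Corollary \ref{pro3.6}: applied componentwise with exponent $\beta$ it yields $|Y_{t_{N-1}}^{l}|^{\beta}\leq C\,\mathbb{\hat{E}}_{t_{N-1}}[|\xi^{l}|^{\beta}+(\int_{t_{N-1}}^{T}h_{s}^{l}\,ds)^{\beta}]$, whence $\mathbb{\hat{E}}[|Y_{t_{N-1}}|^{\beta}]<\infty$; combined with the conditional-$G$-expectation representation of $Y_{t_{N-1}}$ this places it in $L_{G}^{\beta}(\Omega_{t_{N-1}})$ rather than merely in $\mathbb{L}^{\beta}(\Omega_{t_{N-1}})$. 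Iterating the construction backward across all $N$ subintervals produces local solutions on each $[t_{k-1},t_{k}]$ whose endpoint values match.

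Next I would paste these local triples into a single $(Y,Z,K)$ on $[0,T]$. Because consecutive pieces share their common endpoint value, the integral identity \eqref{my1} holds on each subinterval and hence on all of $[0,T]$; since there are only finitely many pieces and the relevant norms are controlled by finite sums over the partition, one gets $(Y,Z,K)\in S_{G}^{\alpha}(0,T;\mathbb{R}^{n})\times H_{G}^{\alpha}(0,T;\mathbb{R}^{n\times d})\times\mathcal{A}_{G}^{\alpha}(0,T;\mathbb{R}^{n})$ together with $Y\in M_{G}^{\beta}(0,T;\mathbb{R}^{n})$. The one genuinely $G$-specific check here is that the concatenated $K$ remains a non-increasing $G$-martingale on the whole interval: each piece is a non-increasing $G$-martingale with the correct normalization, and the time-consistency of the conditional $G$-expectation guarantees that gluing them preserves the $G$-martingale property.

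For uniqueness I would argue by the same backward induction. If $(Y,Z,K)$ and $(Y',Z',K')$ are two global solutions, their restrictions to $[t_{N-1},T]$ are both local solutions with terminal datum $\xi$, so Theorem \ref{my16} forces them to coincide there, and in particular $Y_{t_{N-1}}=Y'_{t_{N-1}}$. This common value is then the terminal datum on $[t_{N-2},t_{N-1}]$, where uniqueness of the local solution again forces agreement; proceeding down to $t_{0}$ yields equality on all of $[0,T]$. I expect the main obstacle to be the first of the two $G$-framework points above, namely verifying $Y_{t_{k}}\in L_{G}^{\beta}$ at each patching time (and not merely membership in $\mathbb{L}^{\beta}$), since this is precisely the hypothesis that Theorem \ref{my16} needs in order to be chained; the pasting of the $K$-martingale is the second, more routine, point, and all remaining estimates are standard.
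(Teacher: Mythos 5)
Your proposal is correct and follows essentially the same route as the paper: a backward concatenation of the local solutions from Theorem \ref{my16} over a partition of mesh at most $\delta$, pasting the triples $(Y,Z,K)$ together, and deducing uniqueness interval by interval from the local uniqueness. You are in fact slightly more explicit than the paper's own proof on the one nontrivial chaining point, namely that each patched terminal datum $Y_{t_k}$ lies in $L_G^{\beta}$ (not merely in $\mathbb{L}^{\beta}$), which the paper leaves implicit.
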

	
	\begin{proof}
		Choose integer $m$ large enough such that $m\delta\geq T$, where $\delta$ is
		given by Theorem \ref{my16}. Taking $h=\frac{T}{m}$ and applying Theorem
		\ref{my16}, we deduce that the $G$-BSDE (\ref{my1}) has a unique solution
		$(Y^{(m)},Z^{(m)},K^{(m)})\in{S}_{G}^{\alpha}(T-h,T;\mathbb{R}^{n})\times
		H_{G}^{\alpha}(T-h,T;\mathbb{R}^{n\times d})\times\mathcal{A}_{G}^{\alpha
		}(T-h,T;\mathbb{R}^{n})$ on the time interval $[T-h,T]$. Next we take $T-{h}$
		as the terminal time and $Y_{T-h}^{(m)}$ as the terminal condition. Applying
		Theorem \ref{my16} again, we deduce that the $G$-BSDE (\ref{my1}) admits a
		unique solution  $(Y^{(m-1)},Z^{(m-1)},K^{(m-1)})\in{S}_{G}^{\alpha
		}(T-2h,T-h;\mathbb{R} ^{n})\times H_{G}^{\alpha}(T-2h,T-h;\mathbb{R}^{n\times
			d})\times\mathcal{A}_{G}^{\alpha}(T-2h,T-h;\mathbb{R}^{n})$ on the time
		interval  $[T-2{h},T-{h}]$. Repeating this procedure, we obtain a solution
		sequence  $(Y^{(i)},Z^{(i)},K^{(i)})_{i\leq m}$. Set
		\[
		{Y}_{t}=\sum\limits_{i=1}^{m}Y_{t}^{(i)}I_{[(i-1)h,ih)}(t)+Y_{T}
		^{(m)}I_{\{T\}}(t),\ {Z}_{t}=\sum\limits_{i=1}^{m}Z_{t}^{(i)}I_{[(i-1)h,ih)}
		(t)+Z_{T}^{(m)}I_{\{T\}}(t)
		\]
		and
		\[
		K_{t}=K_{t}^{(i)}+\sum_{j=1}^{i-1}K_{jh}^{(j)},\ \text{for}\ t\in
		\lbrack(i-1)h,ih)\ \text{and}\ K_{T}=K_{T}^{(m)}+\sum_{j=1}^{m-1}K_{jh}%
		^{(j)}.
		\]
		It is easy to check that $({Y},{Z},{K})\in{S}_{G}^{\alpha}(0,T;\mathbb{R}
		^{n})\times H_{G}^{\alpha}(0,T;\mathbb{R}^{n\times d})\times\mathcal{A}
		_{G}^{\alpha}(0,T;\mathbb{R}^{n})$ is a solution to $G$-BSDE (\ref{my1}) and
		$Y\in{M}_{G}^{\beta}(0,T;\mathbb{R}^{n})$. Thus we get the existence. The
		uniqueness follows from the one on each small interval. The proof is complete.
	\end{proof}
	
	We conclude this subsection with the comparison theorem for multi-dimensional
	$G$-BSDEs (\ref{my1}). Consider the following two $G$-BSDEs on the interval
	$[0,T]$:
	\begin{equation}%
	\begin{split}
	& Y_{t}^{l} =\xi^{l}+\int_{t}^{T}f^{l}(s,Y_{s},Z_{s}^{l})ds+\int_{t}
	^{T}g_{ij}^{l}(s,Y_{s},Z_{s}^{l})d\langle B^{i},B^{j}\rangle_{s}-\int_{t}%
	^{T}Z_{s}^{l}dB_{s}-(K_{T}^{l}-K_{t}^{l}),\ 1\leq l\leq n,\\
	& \bar{Y}_{t}^{l} =\bar{\xi}^{l}+\int_{t}^{T}\bar{f}^{l}(s,\bar{Y}_{s}
	,\bar{Z}_{s}^{l})ds+\int_{t}^{T}\bar{g}_{ij}^{l}(s,\bar{Y}_{s},\bar{Z}_{s}
	^{l})d\langle B^{i},B^{j}\rangle_{s}-\int_{t}^{T}\bar{Z}_{s}^{l}dB_{s}%
	-(\bar{K}_{T}^{l}-\bar{K}_{t}^{l}),\ 1\leq l\leq n.
	\end{split}
	\nonumber
	\end{equation}
	Then we have
	
	\begin{theorem}
		\label{Myth4.1} Suppose that $f^{l}(t,y,z^{l}),\bar{f}^{l}(t,\bar{y},z^{l}
		),g^{l}_{ij}(t,y,z^{l}),\bar{g}^{l}_{ij}(t,\bar{y},z^{l})$ satisfy
		$(H_{1})-(H_{2})$  and$\ \xi,\bar{\xi}\in L_{G}^{\beta}(\Omega_{T})$ for some
		$\beta>1.$ Assume the  following conditions hold:
		
		\begin{description}

			\item[(i)] for any $1\leq l\leq n$, and for each  $t\in[0,T]$, $z^{l}%
			\in\mathbb{R}^{d}$ and $y,\bar{y}\in\mathbb{R}^{n}$ satisfying $y^{j}\geq
			\bar{y}^{j}$ for $j\neq l$ and $y^{l}=\bar{y}^{l}$, it holds that
			$f^{l}(t,y,z^{l})\geq\bar{f} ^{l}(t,\bar{y},z^{l}),[g^{l}_{ij}(t,y,z^{l}%
			)]_{i,j=1}^{d}\geq[\bar{g}^{l}_{ij}(t,\bar{y},z^{l})]_{i,j=1}^{d}$;
			
			\item[(ii)] $\xi\geq\bar{\xi}$.
		\end{description}
		
		Then we have $Y_{t}\geq\bar{Y}_{t}$ for each $t\in[0,T]$.
	\end{theorem}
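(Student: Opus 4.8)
The plan is to establish the comparison on a short interval first and then propagate it over $[0,T]$ by backward iteration, mirroring the construction of the global solution in Theorem \ref{my17}. It therefore suffices to prove the following local statement: on any interval $[t,t+h]$ with $h\in(0,\delta]$ ($\delta$ as in Theorem \ref{my16}), if the local terminal data satisfy $\zeta\geq\bar\zeta$ componentwise q.s., then the corresponding local solutions satisfy $Y_s\geq\bar Y_s$ q.s. for all $s\in[t,t+h]$. The global conclusion then follows by starting on $[T-h,T]$ with $\xi\geq\bar\xi$, using $Y_{T-h}\geq\bar Y_{T-h}$ as the (still comparable) terminal data on $[T-2h,T-h]$, and repeating.

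On such an interval I would run the two Picard schemes of Theorem \ref{my16} in parallel. Let $Y^{(0)}\equiv\bar Y^{(0)}\equiv 0$, and for $i\geq0$ let $Y^{(i+1),l}$ and $\bar Y^{(i+1),l}$ be the one-dimensional $G$-BSDE solutions attached to the frozen generators $f^{l,Y^{(i)}},g^{l,Y^{(i)}}_{ij}$ (with terminal $\zeta^l$) and $\bar f^{l,\bar Y^{(i)}},\bar g^{l,\bar Y^{(i)}}_{ij}$ (with terminal $\bar\zeta^l$), respectively. The heart of the argument is an induction on $i$ proving $Y^{(i)}_s\geq\bar Y^{(i)}_s$ q.s. for every $s$. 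The base case is trivial. For the inductive step, fix $l$ and apply condition (i) to the vectors $y=(Y^{(i),1}_s,\dots,y^l,\dots,Y^{(i),n}_s)$ and $\bar y=(\bar Y^{(i),1}_s,\dots,y^l,\dots,\bar Y^{(i),n}_s)$: these agree in the $l$-th coordinate, and by the induction hypothesis they satisfy $y^j\geq\bar y^j$ for $j\neq l$. Hence (i) yields $f^{l,Y^{(i)}}(s,y^l,z^l)\geq\bar f^{l,\bar Y^{(i)}}(s,y^l,z^l)$ and the matrix inequality $[g^{l,Y^{(i)}}_{ij}(s,y^l,z^l)]\geq[\bar g^{l,\bar Y^{(i)}}_{ij}(s,y^l,z^l)]$ for all $(y^l,z^l)$. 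Combined with $\zeta^l\geq\bar\zeta^l$, the one-dimensional comparison theorem (Theorem \ref{the1.5}) gives $Y^{(i+1),l}_s\geq\bar Y^{(i+1),l}_s$; since $l$ was arbitrary, $Y^{(i+1)}\geq\bar Y^{(i+1)}$, closing the induction.

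It remains to pass to the limit. From the proof of Theorem \ref{my16}, both $Y^{(i)}\to Y$ and $\bar Y^{(i)}\to\bar Y$ in $S_G^\alpha(t,t+h;\mathbb{R}^n)$, which controls $\hat{\mathbb{E}}[\sup_s|Y^{(i)}_s-Y_s|^\alpha]$ and the analogous barred quantity. Using the Markov-type estimate $c(\sup_s|Y^{(i)}_s-Y_s|>\varepsilon)\leq\varepsilon^{-\alpha}\hat{\mathbb{E}}[\sup_s|Y^{(i)}_s-Y_s|^\alpha]$ together with countable subadditivity of the capacity $c$, I would extract a subsequence along which $\sup_s|Y^{(i_k)}_s-Y_s|\to0$ and $\sup_s|\bar Y^{(i_k)}_s-\bar Y_s|\to0$ q.s. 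The inequalities $Y^{(i_k)}_s\geq\bar Y^{(i_k)}_s$ then pass to the limit, giving $Y_s\geq\bar Y_s$ q.s. for all $s\in[t,t+h]$, which is the desired local comparison.

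The step I expect to be the main obstacle is precisely this limit passage. Unlike the classical case, order is not automatically preserved under $S_G^\alpha$-norm convergence, so one must invoke the identification of norm convergence with q.s. convergence of a subsequence (through the capacity $c$) in order to transfer the pointwise q.s. inequalities from the iterates to their limits. By contrast, the inductive comparison of the iterates is a clean application of condition (i) and Theorem \ref{the1.5}, and it is exactly here that the quasi-monotone dependence of $f^l,g^l_{ij}$ on the off-diagonal components of $y$ is used; this quasi-monotonicity is what makes the componentwise freezing compatible with the one-dimensional comparison principle.
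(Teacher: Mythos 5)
Your proposal is correct and follows essentially the same route as the paper's proof: reduction to a small interval $[T-h,T]$ with backward propagation, parallel Picard iterations with frozen off-diagonal components, componentwise application of the one-dimensional comparison theorem (Theorem \ref{the1.5}) via condition (i), and passage to the limit using the convergence established in the proof of Theorem \ref{my16}. Your explicit capacity/subsequence argument for the limit passage merely spells out what the paper leaves implicit.
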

	
	\begin{proof}
		For symbol simplicity, we assume $g_{ij}^{l}\equiv\bar{g}_{ij} ^{l}\equiv0$,
		and the proof for the general case is just  similar. According to the proof of
		Theorem \ref{my17},  it suffices to prove the theorem on the interval
		$[T-h,T]$, since the comparison principle  on $[0,T]$ can be obtained
		backwardly by iteration.  In the following we will prove the conclusion based
		on the Picard iteration  obtained in the proof of Theorem \ref{my16}.
		
		For each $i\in\mathbb{N}$ and $1\leq l\leq n$, let $Y^{(i),l}$ be  iterated
		defined by $Y_{t}^{(0)}\equiv0$ and
		\begin{equation}
		Y_{t}^{(i),l} =\xi^{l}+\int_{t}^{T}f^{l, Y^{(i-1)}}(s,Y_{s}^{(i),l}%
		,Z_{s}^{(i),l})ds-\int_{t}^{T}Z_{s}^{(i),l}dB_{s}-(K_{T}^{(i),l}-K_{t}%
		^{(i),l}),\ T-h\leq t\leq T.\nonumber
		\end{equation}
		Similarly, let $\bar{Y}^{(i),l}$ be iterated defined by $\bar{Y}_{t}^{(0)}
		\equiv0$ and
		\begin{equation}
		\bar{Y}_{t}^{(i),l} =\bar{\xi}^{l}+\int_{t}^{T}\bar{f}^{l,\bar{Y}^{(i-1)}%
		}(s,\bar{Y}_{s}^{(i),l} ,\bar{Z}_{s}^{(i),l})ds-\int_{t}^{T}\bar{Z}%
		_{s}^{(i),l}dB_{s}-(\bar{K} _{T}^{(i),l}-\bar{K}_{t}^{(i),l}),\ T-h\leq t\leq
		T.\nonumber
		\end{equation}
		For $i=0$, from conditions (i) and (ii), we can apply the comparison  theorem
		of one-dimensional $G$-BSDEs (Theorem \ref{the1.5}) to the components of the
		above two  equations and get
		\[
		Y_{t}^{(1)}\geq\bar{Y}_{t}^{(1)},\ \ \ \ T-h\leq t\leq T.
		\]
		Repeating the above procedure, we have
		\[
		Y_{t}^{(i)}\geq\bar{Y}_{t}^{(i)},\ \ \ \ T-h\leq t\leq T.
		\]
		Letting $i\rightarrow\infty$, according to the proof of Theorem \ref{my16},
		we  obtain finally
		\[
		Y_{t}\geq\bar{Y}_{t},\ \ \ \ T-h\leq t\leq T,
		\]
		which is the desired result.
	\end{proof}
	
	\begin{remark}
		\label{re3.1}  \upshape{
			In the routine method of Picard iteration, the iteration equation (\ref{myq4}) takes the form
			\begin{equation}\nonumber
			\begin{split}
			Y_{s}^{U,l} &  =\zeta^{l}+\int_{s}^{t+h}f^l(r,U_r,Z_{r}
			^{U,l})dr+\int_{s}^{t+h}g_{ij}^{l}(r,U_r,Z_{r}^{U,l})d\langle
			B^{i},B^{j}\rangle_{r}-\int_{s}^{t+h}Z_{r}^{U,l}dB_{r}\\
			&  \ \ \ -(K_{t+h}^{U,l}-K_{s}^{U,l}),\ \ \ 1\leq l\leq n.
			\end{split}
			\end{equation}
			But this formulation can only, in the above limit argument, obtain the comparison theorem under a  assumption stronger than (i) in Theorem \ref{Myth4.1} on generators that
			\begin{description}
				\item[(i')]
				for any $1\leq l\leq n$, and for each
				$t\in [0,T]$,  $z^l\in \mathbb{R}^{d}$ and $y,\bar{y}\in \mathbb{R}^{n}$ such that $y^{j}\geq\bar{y}^{j}$ for each $j$,   we have $f^{l}(t,y,z^{l})\geq\bar{f}	^{l}(t,\bar{y},z^{l}),[g^{l}_{ij}(t,y,z^{l})]_{i,j=1}^d\geq[\bar{g}^{l}_{ij}(t,\bar{y},z^{l})]_{i,j=1}^d$,
			\end{description}
			which,  when $n=1$, even does not contain the comparison theorem for one-dimensional $G$-BSDEs. The advantage that we adopt a new form of generators as in  (\ref{myq4}) is that we can get the comparison theorem in a general form similar to the classical case.}
		
	\end{remark}
	
	\subsection{Some useful estimates}
	
	In this section, we shall present some estimates for $Y$. For symbol
	simplicity, we will denote
	\begin{equation}
	\phi(t,y,z)=(\phi^{1}(t,y,z^{1}),\phi^{2}(t,y,z^{2}),\cdots,\phi^{n}%
	(t,y,z^{n}))^{T},\ \ \ \text{for}\ \phi=f,g_{ij}.\nonumber
	\end{equation}
	Then it holds that
	
	\begin{proposition}
		\label{Myt1} Let ${}^{\nu}\xi\in L_{G}^{\beta}(\Omega_{T};\mathbb{R}^{n})$,
		and  ${}^{\nu}f$, ${}^{\nu}g_{ij}$ satisfy $(H1)$-$(H2)$ for some  $\beta>1,$
		$\nu=1,2.$ Assume that $({}^{\nu}Y,{}^{\nu}Z,{}^{\nu}K)\in{S}_{G}^{\alpha
		}(0,T;\mathbb{R}^{n})\times H_{G}^{\alpha}(0,T;\mathbb{R}^{n\times d}
		)\times\mathcal{A}_{G}^{\alpha}(0,T;\mathbb{R}^{n})$ for some $1<\alpha
		\leq\beta$, are the solutions of equation \eqref{my1} corresponding to
		${}^{\nu}\xi$, ${}^{\nu}f$ and ${}^{\nu}g_{ij}$. Set $\hat{Y}_{t}={}^{1}Y_{t}
		-{}^{2}Y_{t}$. Then there exists a constant  $C(\alpha,n,T)$ depending only on
		$T$, $G$, $n$, $L$ and $\alpha$ such that
		\begin{equation}
		\label{Myeq4.1}|\hat{Y}_{t}|^{\alpha}\leq C(\alpha,n,T)\mathbb{\hat{E}}%
		_{t}[|\hat{\xi}|^{\alpha}+(\int_{t}^{T}\hat{h} _{s}ds)^{\alpha}],
		\end{equation}
		where $\hat{\xi}={}^{1}\xi-{}^{2}\xi$, $\hat{h}_{s}=|{}^{1}f(s,{}^{2}Y_{s}%
		,{}^{2}Z_{s})-{}^{2}f(s,{}^{2}Y_{s},{}^{2}Z_{s})|+\sum_{i,j=1} ^{d}|{}%
		^{1}g_{ij}(s,{}^{2}Y_{s},{}^{2}Z_{s})-{}^{2}g_{ij}(s,{}^{2}Y_{s},{}^{2}%
		Z_{s})|$.
	\end{proposition}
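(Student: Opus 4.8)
The plan is to reduce the multi-dimensional estimate to $n$ one-dimensional estimates by freezing, in the equation for the $l$-th component, the remaining components at their actual solution values, and then to absorb the resulting coupling through a Gronwall-type iteration. First I would fix $1\leq l\leq n$ and observe that ${}^{1}Y^{l}$ solves the one-dimensional $G$-BSDE with generator $(s,y^{l},z^{l})\mapsto {}^{1}f^{l}(s,{}^{1}Y_{s}^{1},\ldots,y^{l},\ldots,{}^{1}Y_{s}^{n},z^{l})$ (and analogously for $g_{ij}$), and similarly ${}^{2}Y^{l}$ with the other components frozen at ${}^{2}Y_{s}^{j}$. Since ${}^{\nu}Y\in M_{G}^{\beta}(0,T;\mathbb{R}^{n})$ by Theorem \ref{my17}, Lemma \ref{MG lemma} guarantees that these frozen generators satisfy $(A_{1})$--$(A_{2})$, so Proposition \ref{pro3.5} applies to the pair $({}^{1}Y^{l},{}^{2}Y^{l})$ and yields
\[
|\hat{Y}_{t}^{l}|^{\alpha}\leq C(\alpha)\mathbb{\hat{E}}_{t}[|\hat{\xi}^{l}|^{\alpha}+(\int_{t}^{T}\bar{h}_{s}^{l}ds)^{\alpha}],
\]
where $\bar{h}_{s}^{l}$ is the difference of the two frozen generators evaluated at $({}^{2}Y_{s}^{l},{}^{2}Z_{s}^{l})$.

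Next I would split $\bar{h}_{s}^{l}$ by inserting the intermediate term ${}^{1}f^{l}(s,{}^{2}Y_{s},{}^{2}Z_{s}^{l})$: the piece coming from replacing the frozen components ${}^{1}Y_{s}^{j}$ by ${}^{2}Y_{s}^{j}$ for $j\neq l$ is controlled by $L|\hat{Y}_{s}|$ via $(H_{2})$, while the remaining piece is exactly the $l$-th contribution to the intrinsic difference $\hat{h}_{s}$ of the statement. Summing over $l$, using the elementary bound $|y|^{\alpha}\leq n^{\max(\alpha/2-1,0)}\sum_{l}|y^{l}|^{\alpha}$ together with $(a+b)^{\alpha}\leq 2^{\alpha-1}(a^{\alpha}+b^{\alpha})$, and applying H\"older's inequality to $(\int_{t}^{T}L|\hat{Y}_{s}|ds)^{\alpha}$, I obtain the self-referential conditional inequality
\[
|\hat{Y}_{t}|^{\alpha}\leq C(\alpha,n)\mathbb{\hat{E}}_{t}[|\hat{\xi}|^{\alpha}+(\int_{t}^{T}\hat{h}_{s}ds)^{\alpha}]+B\,\mathbb{\hat{E}}_{t}[\int_{t}^{T}|\hat{Y}_{s}|^{\alpha}ds],\qquad B:=C(\alpha,n)L^{\alpha}T^{\alpha-1}.
\]

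The final and most delicate step is to close this inequality. Writing $V_{t}=|\hat{Y}_{t}|^{\alpha}$ and $A_{t}=C(\alpha,n)\mathbb{\hat{E}}_{t}[|\hat{\xi}|^{\alpha}+(\int_{t}^{T}\hat{h}_{s}ds)^{\alpha}]$, the inequality reads $V_{t}\leq A_{t}+B\,\mathbb{\hat{E}}_{t}[\int_{t}^{T}V_{s}ds]$. I would iterate this bound, at each stage using the sub-additivity of the conditional $G$-expectation in the form $\mathbb{\hat{E}}_{t}[\int_{t}^{T}X_{s}ds]\leq\int_{t}^{T}\mathbb{\hat{E}}_{t}[X_{s}]ds$, the tower property $\mathbb{\hat{E}}_{t}[\mathbb{\hat{E}}_{s}[\cdot]]=\mathbb{\hat{E}}_{t}[\cdot]$ for $t\leq s$, and the monotonicity $A_{s}\leq C(\alpha,n)\mathbb{\hat{E}}_{s}[|\hat{\xi}|^{\alpha}+(\int_{t}^{T}\hat{h}_{r}dr)^{\alpha}]$. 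After $k$ steps the nested time-integrals produce the factor $(BT)^{k}/k!$ in front of $\mathbb{\hat{E}}_{t}[|\hat{\xi}|^{\alpha}+(\int_{t}^{T}\hat{h}_{s}ds)^{\alpha}]$, while the remainder is a multiple of $(BT)^{k}/(k-1)!$ times $\mathbb{\hat{E}}_{t}[\int_{t}^{T}V_{s}ds]$, which is finite because ${}^{\nu}Y\in S_{G}^{\alpha}\subset M_{G}^{\alpha}$ and hence tends to $0$ as $k\to\infty$. Summing the series delivers \eqref{Myeq4.1} with $C(\alpha,n,T)=C(\alpha,n)e^{BT}$, which exhibits the asserted dependence on $T$. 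I expect this Gronwall-type closing in the sublinear, conditional setting---keeping careful track of the interchange of $\mathbb{\hat{E}}_{t}$ with the time integrals---to be the main obstacle; an alternative that avoids the infinite iteration is to first establish the estimate on a small interval $[T-h,T]$ with $h$ small enough for the last term to be directly absorbed, and then propagate it backward interval by interval exactly as in the proof of Theorem \ref{my17}.
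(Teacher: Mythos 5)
Your proposal is correct and follows essentially the same route as the paper: apply the one-dimensional estimate of Proposition \ref{pro3.5} componentwise with the other components frozen at the solution values, split the generator difference via $(H_{2})$ into the intrinsic term $\hat{h}$ plus $L|\hat{Y}|$, sum over $l$, and close with a Gronwall-type argument. The paper closes the self-referential inequality more directly than your infinite iteration: it applies $\mathbb{\hat{E}}_{t}$ to both sides and uses the tower property $\mathbb{\hat{E}}_{t}[\mathbb{\hat{E}}_{s}[\cdot]]=\mathbb{\hat{E}}_{t}[\cdot]$, so the ordinary deterministic Gronwall lemma applies to $s\mapsto\mathbb{\hat{E}}_{t}[|\hat{Y}_{s}|^{\alpha}]$.
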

	
	\begin{proof}
		For any $1\leq l\leq n$, applying Proposition \ref{pro3.5} to $({}^{1}Y^{l}%
		-{}^{2}Y^{l})$ yields that, for each $s\in[t,T]$,
		\begin{equation}
		\label{Myeq3.5}%
		\begin{split}
		|\hat{Y}_{s}^{l}|^{\alpha}  &  \leq C(\alpha)\mathbb{\hat{E}}_{s}[|\hat{\xi
		}^{l}|^{\alpha}+(\int_{s}^{T}k_{r}^{l}dr)^{\alpha}] \leq C(\alpha
		)\mathbb{\hat{E}}_{s}[|\hat{\xi}|^{\alpha}+(\int_{s} ^{T}\hat{h}%
		_{r}dr)^{\alpha}+(\int_{s}^{T}|\hat{Y}_{r}|dr)^{\alpha}],
		\end{split}
		\end{equation}
		where $k_{r}^{l}=|{}^{1}f^{l}(r,{}^{1}Y_{r},{}^{2}Z_{r}^{l})-{}^{2}f^{l}
		(r,{}^{2}Y_{r},{}^{2}Z_{r}^{l})|+\sum_{i,j=1}^{d}|{}^{1}g_{ij}^{l}(r,{}%
		^{1}Y_{r},{}^{2}Z_{r}^{l})-{}^{2}g_{ij}^{l} (r,{}^{2}Y_{r},{}^{2}Z_{r}^{l}%
		)|$.  Denote by $C(\alpha,n,T)$ a generic constant depending on $T$, $G$, $n$,
		$L$ and $\alpha$, which may vary from line to line.  Summing up (\ref{Myeq3.5}%
		) over $l$, we conclude that
		\[
		|\hat{Y}_{s}|^{\alpha}\leq C(\alpha,n,T)(\mathbb{\hat{E}}_{s}[|\hat{\xi
		}|^{\alpha}+(\int_{t}^{T}\hat{h}_{r}dr)^{\alpha}]+\int_{s}^{T}\mathbb{\hat{E}
		}_{s}[|\hat{Y}_{r}|^{\alpha}]dr),\text{ }s\geq t.
		\]
		Taking conditional expectation $\mathbb{\hat{E}}_{t}$ on both sides, we get
		\[
		\mathbb{\hat{E}}_{t}[|\hat{Y}_{s}|^{\alpha}]\leq C(\alpha,n,T)(\mathbb{\hat
			{E}}_{t}[|\hat{\xi}|^{\alpha}+(\int_{t}^{T}\hat{h}_{r}dr)^{\alpha}]+\int%
		_{s}^{T}\mathbb{\hat{E}}_{t}[|\hat{Y}_{r}|^{\alpha}]dr),\text{ }s\geq t.
		\]
		Consequently, from Gronwall's inequality, we obtain that
		\[
		\mathbb{\hat{E}}_{t}[|\hat{Y}_{s}|^{\alpha}]\leq C(\alpha,n,T)(\mathbb{\hat
			{E}}_{t}[|\hat{\xi}|^{\alpha}+(\int_{t}^{T}\hat{h}_{r}dr)^{\alpha}]),\text{
		}s\geq t,
		\]
		which ends the proof.
	\end{proof}
	
	\begin{corollary}
		\label{Myt2} Let $\xi\in L_{G}^{\beta}(\Omega_{T};\mathbb{R}^{n})$ and
		$f,g_{ij}$ satisfy $(H1)$-$(H2)$ for some $\beta>1$. Assume that
		$(Y,Z,K)\in{S}_{G}^{\alpha}(0,T;\mathbb{R}^{n})\times H_{G}^{\alpha
		}(0,T;\mathbb{R} ^{n\times d})\times\mathcal{A}_{G}^{\alpha}(0,T;\mathbb{R}%
		^{n})$ for some  $1<\alpha\leq\beta$, is a solution of equation \eqref{my1}
		corresponding to  $\xi$, $f$ and $g_{ij}$. Then there exists a constant
		$C(\alpha,n,T)$  depending only on $T$, $G$, $L$, $n$ and $\alpha$ such that
		\begin{equation}
		\label{Myeq4.2}|{Y}_{t}|^{\alpha}\leq C(\alpha,n,T)\mathbb{\hat{E}}_{t}[|{\xi}
		|^{\alpha}+(\int_{t}^{T}h_{s}ds)^{\alpha}],
		\end{equation}
		where $h_{s}=|f(s,0,0)|+\sum_{i,j=1}^{d}|g_{ij}(s,0,0)|$.
	\end{corollary}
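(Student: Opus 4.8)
The plan is to obtain this as an immediate specialization of Proposition \ref{Myt1}, mirroring the way Corollary \ref{pro3.6} is deduced from Proposition \ref{pro3.5} in the one-dimensional case. The idea is to compare the given solution against the trivial solution of the zero system, so that the continuous-dependence estimate \eqref{Myeq4.1} collapses to the desired a priori bound.

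First I would set up the auxiliary comparison. Take the first system to be the given data, ${}^{1}\xi:=\xi$, ${}^{1}f:=f$, ${}^{1}g_{ij}:=g_{ij}$, with solution $({}^{1}Y,{}^{1}Z,{}^{1}K)=(Y,Z,K)$. For the second system, take the zero data ${}^{2}\xi:=0$, ${}^{2}f:=0$, ${}^{2}g_{ij}:=0$. The zero generators trivially satisfy $(H1)$-$(H2)$ (with the same $\beta$ and any Lipschitz constant $L$), and $0\in L_{G}^{\beta}(\Omega_{T};\mathbb{R}^{n})$, so the hypotheses of Proposition \ref{Myt1} are met. By the well-posedness result (Theorem \ref{my17}), together with the obvious fact that $(0,0,0)$ solves \eqref{my1} with zero data, the corresponding solution is $({}^{2}Y,{}^{2}Z,{}^{2}K)=(0,0,0)$.

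With this choice I would then simply identify the terms appearing in estimate \eqref{Myeq4.1}. Since ${}^{2}Y=0$ and ${}^{2}Z=0$, we have $\hat{Y}_{t}={}^{1}Y_{t}-{}^{2}Y_{t}=Y_{t}$ and $\hat{\xi}={}^{1}\xi-{}^{2}\xi=\xi$, while
\[
\hat{h}_{s}=|{}^{1}f(s,{}^{2}Y_{s},{}^{2}Z_{s})-{}^{2}f(s,{}^{2}Y_{s},{}^{2}Z_{s})|+\sum_{i,j=1}^{d}|{}^{1}g_{ij}(s,{}^{2}Y_{s},{}^{2}Z_{s})-{}^{2}g_{ij}(s,{}^{2}Y_{s},{}^{2}Z_{s})|=|f(s,0,0)|+\sum_{i,j=1}^{d}|g_{ij}(s,0,0)|=h_{s}.
\]
Substituting into \eqref{Myeq4.1} yields \eqref{Myeq4.2} with the same constant $C(\alpha,n,T)$, which completes the argument.

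I do not expect a genuine obstacle here, as the corollary is a direct specialization of the preceding proposition. The only points requiring (routine) verification are that the zero generators are admissible under $(H1)$-$(H2)$ and that they produce the zero solution; both are immediate, the latter following from uniqueness in Theorem \ref{my17}.
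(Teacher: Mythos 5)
Your proposal is correct and coincides with the paper's own proof, which likewise takes ${}^{2}\xi\equiv0$, ${}^{2}f\equiv{}^{2}g_{ij}\equiv0$ in Proposition \ref{Myt1} and notes that the corresponding solution is identically zero. The identification of $\hat{\xi}$, $\hat{Y}$ and $\hat{h}$ with $\xi$, $Y$ and $h$ is exactly as the paper intends.
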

	
	\begin{proof}
		Taking ${}^{2}\xi\equiv0,{}^{2}f\equiv{}^{2}g_{ij}\equiv0$ in Proposition
		\ref{Myt1} and noting that ${}^{2}Y\equiv0,$ ${}^{2}Z\equiv0$,${}^{2}K\equiv
		0$, we get the desired conclusion.
	\end{proof}
	
	\begin{proposition}
		\label{Myt4.2}Let $\xi\in L_{G}^{\beta}(\Omega_{T};\mathbb{R}^{n})$ and $f$,
		$g_{ij}$ satisfy $(H1)$-$(H2)$ for some $\beta>1$. Assume that $(Y,Z,K)\in
		{S}_{G}^{\alpha}(0,T;\mathbb{R}^{n})\times H_{G}^{\alpha}(0,T;\mathbb{R}%
		^{n\times d})\times\mathcal{A}_{G}^{\alpha}(0,T;\mathbb{R}^{n})$ for some
		$1<\alpha<\beta$, is a solution of equation \eqref{my1} corresponding to $\xi
		$, $f$ and $g_{ij}$. Let any $1<\alpha<\alpha^{\prime}\leq\beta$ be given.
		Then there exists a constant $C=C(\alpha,\alpha^{\prime},n,T)$ depending only
		on $T,G,L,n,\alpha$ and $\alpha^{\prime}$ such that
		\begin{equation}
		\mathbb{\hat{E}}[\sup_{t\in\lbrack0,T]}|Y_{t}|^{\alpha}]\leq C\{(\mathbb{\hat
			{E}}[|{\xi}|^{\alpha^{\prime}}])^{\frac{\alpha}{\alpha^{\prime}}%
		}+(\mathbb{\hat{E}}[(\int_{0}^{T}h_{s}ds)^{\alpha^{\prime}}])^{\frac{\alpha
			}{\alpha^{\prime}}}\},
		\end{equation}
		where $h_{s}=|f(s,0,0)|+\sum_{i,j=1}^{d}|g_{ij}(s,0,0)|$.
	\end{proposition}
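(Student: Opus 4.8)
The plan is to upgrade the pointwise (in $t$) estimate \eqref{Myeq4.2} of Corollary \ref{Myt2} into a uniform-in-time estimate under the supremum, trading the loss of a bit of integrability (from $\alpha'$ down to $\alpha$) via the $G$-evaluation inequality of Theorem \ref{the2.10}. First I would apply Corollary \ref{Myt2} with exponent $\alpha$ to obtain, for every $t\in[0,T]$,
\[
|Y_{t}|^{\alpha}\leq C(\alpha,n,T)\,\mathbb{\hat{E}}_{t}\!\left[|\xi|^{\alpha}+\Bigl(\int_{0}^{T}h_{s}\,ds\Bigr)^{\alpha}\right]=:C(\alpha,n,T)\,\mathbb{\hat{E}}_{t}[\Theta],
\]
where $\Theta=|\xi|^{\alpha}+(\int_{0}^{T}h_{s}\,ds)^{\alpha}$ is $\mathcal{F}_{T}$-measurable and, by the assumptions $(H1)$-$(H2)$ together with $\xi\in L_{G}^{\beta}$, lies in $L_{G}^{\alpha'/\alpha}(\Omega_{T})$. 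Crucially, since the right-hand side already involves the conditional expectation $\mathbb{\hat{E}}_{t}[\Theta]$, taking the supremum over $t$ is now controlled by the $G$-evaluation of $\Theta$ rather than of $Y$ itself.

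Next I would take the supremum over $t\in[0,T]$ and then the (unconditional) $G$-expectation, giving
\[
\mathbb{\hat{E}}\Bigl[\sup_{t\in[0,T]}|Y_{t}|^{\alpha}\Bigr]\leq C(\alpha,n,T)\,\mathbb{\hat{E}}\Bigl[\sup_{t\in[0,T]}\mathbb{\hat{E}}_{t}[\Theta]\Bigr].
\]
The right-hand side is exactly the $G$-evaluation $\mathcal{E}[\Theta]$ of Song's inequality. Writing $\alpha'=\alpha+\delta$ with $\delta=\alpha'-\alpha>0$, and applying Theorem \ref{the2.10} with base exponent $1$ and increment $\delta/\alpha$ (so that the inner power is $\Theta$ and the outer power is $\Theta^{\alpha'/\alpha}$), I obtain
\[
\mathbb{\hat{E}}\Bigl[\sup_{t\in[0,T]}\mathbb{\hat{E}}_{t}[\Theta]\Bigr]\leq C\bigl(\mathbb{\hat{E}}[\Theta^{\alpha'/\alpha}]\bigr)^{\alpha/\alpha'}.
\]
Since $\Theta^{\alpha'/\alpha}=\bigl(|\xi|^{\alpha}+(\int_{0}^{T}h_{s}\,ds)^{\alpha}\bigr)^{\alpha'/\alpha}$, an elementary inequality $(a+b)^{p}\leq 2^{p-1}(a^{p}+b^{p})$ bounds it by a constant times $|\xi|^{\alpha'}+(\int_{0}^{T}h_{s}\,ds)^{\alpha'}$. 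Taking expectations and the $\alpha/\alpha'$-power, and using $(a+b)^{\alpha/\alpha'}\leq a^{\alpha/\alpha'}+b^{\alpha/\alpha'}$, splits the bound into the two desired terms $(\mathbb{\hat{E}}[|\xi|^{\alpha'}])^{\alpha/\alpha'}$ and $(\mathbb{\hat{E}}[(\int_{0}^{T}h_{s}\,ds)^{\alpha'}])^{\alpha/\alpha'}$, yielding the claim.

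The main technical point to check carefully is the applicability of Theorem \ref{the2.10}: that theorem is stated for $\xi\in L_{ip}(\Omega_{T})$, so I would first establish the inequality for $\Theta$ in $L_{ip}$ (or for a density argument where $\xi$ and the generator data are approximated by elements of $L_{ip}$) and then pass to the limit, using that $\Theta\in L_{G}^{\alpha'/\alpha}(\Omega_{T})$ and that both sides of the estimate are continuous with respect to the $L_{G}^{\alpha'/\alpha}$-norm. The finiteness of $\mathbb{\hat{E}}[\Theta^{\alpha'/\alpha}]$ is guaranteed because $\alpha'\leq\beta$ ensures $|\xi|^{\alpha'}\in L_{G}^{1}$ and, by Lemma \ref{MG lemma} applied to $h_{s}=|f(s,0,0)|+\sum_{i,j}|g_{ij}(s,0,0)|$ together with $(H1)$, the integral term has the required integrability. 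I expect this density/approximation step to be the only real obstacle; the remaining inequalities are routine.
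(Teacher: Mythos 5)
Your proposal is correct and follows essentially the same route as the paper: first the pointwise bound of Corollary \ref{Myt2}, then passing the supremum over $t$ inside via the $G$-evaluation estimate of Theorem \ref{the2.10} with increment $\delta=\alpha'-\alpha$. The only cosmetic difference is that you apply Theorem \ref{the2.10} once to the combined quantity $\Theta$ (with base exponent $1$) and split afterwards, whereas the paper applies it separately to $|\xi|$ and $\int_0^T h_s\,ds$; the density remark you flag is the standard extension already implicit in the inclusion $L_G^{\alpha+\delta}(\Omega_T)\subset L_{\mathcal{E}}^{\alpha}(\Omega_T)$.
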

	
	\begin{proof}
		From Corollary \ref{Myt2}, we have
		\begin{equation}%
		\begin{split}
		\mathbb{\hat{E}}[\sup_{t\in\lbrack0,T]}|Y_{t}|^{\alpha}] & \leq C(\mathbb{\hat
			{E}}[\sup_{t\in\lbrack0,T]}\mathbb{\hat{E}}_{t}[|{\xi}|^{\alpha}
		]]+\mathbb{\hat{E}}[\sup_{t\in\lbrack0,T]}\mathbb{\hat{E}}_{t}[(\int_{t}
		^{T}h_{s}ds)^{\alpha}]])\\
		& \leq C(\mathbb{\hat{E}}[\sup_{t\in\lbrack0,T]}\mathbb{\hat{E}}_{t}[|{\xi
		}|^{\alpha} ]]+\mathbb{\hat{E}}[\sup_{t\in\lbrack0,T]}\mathbb{\hat{E}}%
		_{t}[(\int_{0} ^{T}h_{s}ds)^{\alpha}]]).
		\end{split}
		\nonumber
		\end{equation}
		Now the conclusion follows from Theorem \ref{the2.10}.
	\end{proof}
	
	\section{Nonlinear Feynman-Kac formula}
	
	In this section, we study the connection between the multi-dimensional
	$G$-BSDE and the  system of fully nonlinear PDEs. Indeed, we shall show that
	the solution of $G$-BSDE (\ref{my1}) associated with stochastic differential
	equation driven by $G$-Brownian motion ($G$-SDE) is the unique viscosity
	solution to a system of fully nonlinear parabolic PDEs.
	
	First, for any given $t\in\lbrack0,T]$ and $\eta\in L_{G}^{p}(\Omega
	_{t};\mathbb{R}^{k})$, $p\geq2$, we introduce the following $G$-SDEs (recall
	that we always use the Einstein's convention):
	\begin{equation}%
	\begin{cases}
	dX_{s}^{t,\eta}=b(s,X_{s}^{t,\eta})ds+h_{ij}(s,X_{s}^{t,\eta})d\langle
	B^{i},B^{j}\rangle_{s}+\sigma(s,X_{s}^{t,\eta})dB_{s},\ \ \ \ s\in\lbrack
	t,T],\\
	X_{t}^{t,\eta}=\eta,
	\end{cases}
	\label{SDE}%
	\end{equation}
	where $b(s,x),h_{ij}(s,x):[0,T]\times\mathbb{R}^{k}\rightarrow\mathbb{R}^{k}$
	and $\sigma(s,x):[0,T]\times\mathbb{R}^{k}\rightarrow\mathbb{R}^{k\times d}$
	are deterministic continuous functions satisfying:
	
	\begin{description}

		\item[$(H_{3})$] $h_{ij}=h_{ji}$ for $1\leq i,j\leq d$, and there  exists a
		positive constant $L$ such that
		\begin{equation}
		|b(t,x_{1})-b(t,x_{2})|+\sum_{i,j=1}^{d}|h_{ij}(t,x_{1})-h_{ij}(t,x_{2}%
		)|+\sum_{i=1}^{d}|\sigma(t,x_{1})-\sigma(t,x_{2})|\leq L|x_{1}-x_{2}%
		|.\nonumber
		\end{equation}
		
	\end{description}
	
	Following \cite{P10}, the equation \eqref{SDE} admits a unique $M^{p}%
	_{G}(t,T;\mathbb{R}^{k})$-solution $X^{t,\xi}$. Moreover, it holds that
	
	\begin{lemma}
		\label{the1.17} Let $\eta,\eta^{\prime}\in L_{G}^{p}(\Omega_{t};\mathbb{R}
		^{k})$ for some $p\geq2$. Then there exists a constant $C$ depending on
		$L,G,p,k$ and  $T$ such that for each $0\leq t\leq t^{\prime}\leq T$,
		
		\begin{description}

			\item[(i)] $\mathbb{\hat{E}}_{t}[\sup\limits_{s\in\lbrack t,T]}|X_{s}^{t,\eta
			}-X_{s}^{t,\eta^{\prime}}|^{p}]\leq{C}|\eta-\eta^{\prime}|^{p}$;
			
			\item[(ii)] $\mathbb{\hat{E}}_{t}[\sup\limits_{s\in\lbrack t,T]}|X_{s}
			^{t,\eta}|^{p}]\leq C(1+|\eta|^{p})$;
			
			\item[(iii)] $\mathbb{\hat{E}}_{t}[\sup\limits_{s\in\lbrack t,t^{\prime}
				]}|X_{s}^{t,\eta}-\eta|^{p}]\leq C(1+|\eta|^{p})(t^{\prime}-t)^{p/2}$.
		\end{description}
	\end{lemma}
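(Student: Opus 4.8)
The plan is to prove the three estimates in the order (ii), (i), (iii), since the growth bound (ii) supplies the a priori integrability needed to run Gronwall's lemma in (i), and both (ii) and (i) feed into the short-time estimate (iii). Throughout I would work with the conditional $G$-expectation $\mathbb{\hat{E}}_t$ and rely on three tools: the Lipschitz assumption $(H_3)$, which also yields a linear growth bound since $b,h_{ij},\sigma$ are continuous in time and hence bounded at a fixed spatial point on $[0,T]$; the conditional Burkholder--Davis--Gundy inequality for $G$-It\^o integrals; and the bound $|d\langle B^i,B^j\rangle_s|\le\bar\sigma^2\,ds$ arising from the absolute continuity of the mutual variation with bounded density.

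For (ii), I would start from the integral form of \eqref{SDE}, apply the elementary inequality $|\sum_{k=1}^m a_k|^p\le m^{p-1}\sum_k|a_k|^p$ to split the four contributions, and estimate each: the drift and the $d\langle B^i,B^j\rangle$ terms by H\"older's inequality in time, and the stochastic integral term by the conditional BDG inequality. Using the linear growth this produces
\[
\mathbb{\hat{E}}_t\big[\sup_{r\in[t,s]}|X_r^{t,\eta}|^p\big]\le C(1+|\eta|^p)+C\int_t^s \mathbb{\hat{E}}_t\big[\sup_{u\in[t,r]}|X_u^{t,\eta}|^p\big]\,dr,
\]
and Gronwall's inequality then gives the claim. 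A preliminary localization, or an appeal to the known $M^p_G$-bound of the solution from \cite{P10}, is needed to guarantee the left-hand side is a priori finite before invoking Gronwall.

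For (i), I would subtract the equations for $X^{t,\eta}$ and $X^{t,\eta'}$, so that the initial contribution becomes $\eta-\eta'$ while each coefficient difference is controlled by $(H_3)$ times $|X_r^{t,\eta}-X_r^{t,\eta'}|$. The same splitting together with the BDG and H\"older estimates leads to
\[
\mathbb{\hat{E}}_t\big[\sup_{r\in[t,s]}|X_r^{t,\eta}-X_r^{t,\eta'}|^p\big]\le C|\eta-\eta'|^p+C\int_t^s \mathbb{\hat{E}}_t\big[\sup_{u\in[t,r]}|X_u^{t,\eta}-X_u^{t,\eta'}|^p\big]\,dr,
\]
and Gronwall finishes it, the required finiteness now being furnished by (ii).

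For (iii), I would write $X_s^{t,\eta}-\eta$ as the sum of the three integral terms over $[t,s]\subset[t,t']$ and estimate the supremum over $s\in[t,t']$ directly, without a Gronwall loop. The drift and $d\langle B^i,B^j\rangle$ terms contribute a factor $C(1+|\eta|^p)(t'-t)^p$, whereas the stochastic term, via BDG and H\"older, contributes $C(1+|\eta|^p)(t'-t)^{p/2}$, where I use (ii) to bound $\mathbb{\hat{E}}_t[\sup_{r\le T}|X_r^{t,\eta}|^p]$. Since $(t'-t)^{p/2}$ dominates $(t'-t)^p$ for $t'-t$ small, the exponent $p/2$ is the operative one, giving the stated bound. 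The main obstacle is ensuring that the \emph{conditional} BDG inequality and the a priori finiteness are available in the $G$-framework so that Gronwall applies rigorously; once these are in place the three estimates are structurally routine.
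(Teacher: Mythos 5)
The paper does not prove this lemma: it is stated as a known consequence of the $G$-SDE theory in \cite{P10} (the text reads ``Following \cite{P10}, the equation \eqref{SDE} admits a unique $M^{p}_{G}(t,T;\mathbb{R}^{k})$-solution\dots Moreover, it holds that''), and your proposal reproduces exactly the standard argument behind that citation --- splitting the integral form of the equation, conditional BDG for the $G$-It\^o integral, H\"older in time, the density bound for $d\langle B^i,B^j\rangle$, and Gronwall, with the a priori finiteness supplied by the known $M^p_G$-bound. This is correct and consistent with the route the paper implicitly relies on; no gap.
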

	
	Next we consider the following $n$-dimensional $G$-BSDEs on the interval
	$[t,T]$:
	\begin{equation}%
	\begin{split}
	Y_{s}^{t,\eta;l}  &  =\varphi^{l}(X_{T}^{t,\eta})+\int_{s}^{T}f^{l}%
	(r,X_{r}^{t,\eta},Y_{r}^{t,\eta},Z_{r}^{t,\eta;l})dr+\int_{s}^{T}g_{ij}%
	^{l}(r,X_{r}^{t,\eta},Y_{r}^{t,\eta},Z_{r}^{t,\eta;l})d\langle B^{i}%
	,B^{j}\rangle_{r}\\
	&  \ \ \ -\int_{s}^{T}Z_{r}^{t,\eta;l}dB_{r}-(K_{T}^{t,\eta;l}-K_{s}%
	^{t,\eta;l}),\ \ 1\leq l\leq n,
	\end{split}
	\label{eq4.1}%
	\end{equation}
	where the deterministic continuous functions $\varphi^{l}:\mathbb{R}%
	^{k}\rightarrow\mathbb{R},f^{l},g_{ij}^{l}=g_{ji}^{l}:[0,T]\times
	\mathbb{R}^{k}\times\mathbb{R}^{n}\times\mathbb{R}^{d}\rightarrow\mathbb{R}$,
	$1\leq l\leq n,$ satisfy the following assumptions:
	
	\begin{description}

		\item[$(H_{4})$] There exists a constant $L\geq0$ such that
		\begin{equation}%
		\begin{split}
		&  |\varphi^{l}(x_{1})-\varphi^{l}(x_{2})| +|f^{l}(t,x_{1},y_{1},z^{l}
		_{1})-f(t,x_{2},y_{2},z^{l}_{2})|+\sum_{i.j=1}^{d}|g^{l}_{ij}(t,x_{1}
		,y_{1},z^{l}_{1})-g^{l}_{ij}(t,x_{2},y_{2},z^{l}_{2})|\\
		&  \leq L(|x_{1}-x_{2}|+|y_{1}-y_{2}|+|z^{l} _{1}-z^{l}_{2}|).
		\end{split}
		\nonumber
		\end{equation}
		
	\end{description}
	
	\begin{theorem}
		\label{Myth5.1} Assume that $\eta\in L_{G}^{\beta}(\Omega_{t};\mathbb{R}%
		^{k})$  for some $\beta\geq2$ and $(H_{3})$-$(H_{4})$ hold. Then, for any
		$1<\alpha<\beta$, the multi-dimensional $G$-BSDE (\ref{eq4.1}) admits a
		unique  solution $(Y^{t,\eta},Z^{t,\eta},K^{t,\eta})\in{S}_{G}^{\alpha
		}(t,T;\mathbb{R} ^{n})\times H_{G}^{\alpha}(t,T;\mathbb{R}^{n\times d}%
		)\times\mathcal{A} _{G}^{\alpha}(t,T;\mathbb{R}^{n})$.
	\end{theorem}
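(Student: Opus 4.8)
The plan is to recognize (\ref{eq4.1}) as a special instance of the general multi-dimensional $G$-BSDE (\ref{my1}) and to reduce the statement to the global well-posedness result Theorem \ref{my17}. To this end I would freeze the forward process and introduce the terminal value $\xi^{l}:=\varphi^{l}(X_{T}^{t,\eta})$ together with the randomized generators
\[
\tilde{f}^{l}(r,\omega,y,z^{l}):=f^{l}(r,X_{r}^{t,\eta}(\omega),y,z^{l}),\qquad \tilde{g}_{ij}^{l}(r,\omega,y,z^{l}):=g_{ij}^{l}(r,X_{r}^{t,\eta}(\omega),y,z^{l}),
\]
so that (\ref{eq4.1}) becomes exactly an equation of the form (\ref{my1}) posed on the interval $[t,T]$ with data $\xi,\tilde{f},\tilde{g}_{ij}$. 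It then suffices to check that this data meets the hypotheses $(H_{1})$-$(H_{2})$ with terminal condition in $L_{G}^{\beta}$, after which the backward iteration argument of Theorem \ref{my17}, applied on $[t,T]$, delivers a unique solution in the asserted spaces for every $1<\alpha<\beta$.

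First I would verify the integrability of the terminal value. Since $\eta\in L_{G}^{\beta}(\Omega_{t};\mathbb{R}^{k})$ with $\beta\geq2$, the $G$-SDE (\ref{SDE}) admits an $M_{G}^{\beta}(t,T;\mathbb{R}^{k})$-solution $X^{t,\eta}$, and Lemma \ref{the1.17}(ii) gives $\mathbb{\hat{E}}_{t}[\sup_{s\in[t,T]}|X_{s}^{t,\eta}|^{\beta}]\leq C(1+|\eta|^{\beta})$; taking $G$-expectation and using $\eta\in L_{G}^{\beta}$ shows $X_{T}^{t,\eta}\in L_{G}^{\beta}(\Omega_{T};\mathbb{R}^{k})$. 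The Lipschitz bound $|\varphi^{l}(x)|\leq|\varphi^{l}(0)|+L|x|$ contained in $(H_{4})$ then yields $\xi^{l}=\varphi^{l}(X_{T}^{t,\eta})\in L_{G}^{\beta}(\Omega_{T})$, using that $L_{G}^{\beta}$ is stable under Lipschitz transformations.

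Next I would check the two structural assumptions on the generators. For $(H_{1})$, fix $y\in\mathbb{R}^{n}$ and $z^{l}\in\mathbb{R}^{d}$: the map $x\mapsto f^{l}(r,x,y,z^{l})$ is Lipschitz uniformly in $r$ by $(H_{4})$, and for each fixed $x$ the deterministic continuous process $r\mapsto f^{l}(r,x,y,z^{l})$ lies in $M_{G}^{\beta}(t,T)$. Since $X^{t,\eta}\in M_{G}^{\beta}(t,T;\mathbb{R}^{k})$, Lemma \ref{MG lemma} then gives $\tilde{f}^{l}(\cdot,\cdot,y,z^{l})\in M_{G}^{\beta}(t,T)$, and likewise for $\tilde{g}_{ij}^{l}$. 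For $(H_{2})$, because both components share the same forward argument $X_{r}^{t,\eta}$, the Lipschitz estimate of $(H_{4})$ in the variables $(y,z^{l})$ transfers verbatim to $\tilde{f}^{l},\tilde{g}_{ij}^{l}$ with the same constant $L$.

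Having confirmed that $(\xi,\tilde{f},\tilde{g}_{ij})$ satisfies $(H_{1})$-$(H_{2})$ with $\xi\in L_{G}^{\beta}(\Omega_{T};\mathbb{R}^{n})$, I would invoke Theorem \ref{my17} on $[t,T]$ to obtain the unique solution $(Y^{t,\eta},Z^{t,\eta},K^{t,\eta})$ in ${S}_{G}^{\alpha}(t,T;\mathbb{R}^{n})\times H_{G}^{\alpha}(t,T;\mathbb{R}^{n\times d})\times\mathcal{A}_{G}^{\alpha}(t,T;\mathbb{R}^{n})$. The only genuinely technical point is the verification of $(H_{1})$, namely that composing the deterministic generators with the forward diffusion $X^{t,\eta}$ preserves membership in $M_{G}^{\beta}$; but this is precisely what Lemma \ref{MG lemma} was designed to supply, so the reduction is routine once that lemma and the moment estimates of Lemma \ref{the1.17} are in hand.
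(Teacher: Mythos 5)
Your proposal is correct and follows essentially the same route as the paper: show $\varphi(X_{T}^{t,\eta})\in L_{G}^{\beta}(\Omega_{T};\mathbb{R}^{n})$ and that the generators composed with $X^{t,\eta}$ satisfy $(H_{1})$-$(H_{2})$ on $[t,T]$, then invoke Theorem \ref{my17} (where you route the generator verification through Lemma \ref{MG lemma}, the paper appeals directly to Theorem 4.7 of \cite{HW1}, which is the same argument). The one small imprecision is that the moment bound of Lemma \ref{the1.17}(ii) by itself only places $X_{T}^{t,\eta}$ in $\mathbb{L}^{\beta}(\Omega_{T})$, not in the smaller space $L_{G}^{\beta}(\Omega_{T})$; the latter is the standard fact about $G$-SDE solutions with $L_{G}^{\beta}$ initial data that the paper simply asserts from \cite{P10}.
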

	
	\begin{proof}
		Since $\eta\in L_{G}^{\beta}(\Omega_{t};\mathbb{R}^{k})$, we have $X^{t,\eta
		}\in M_{G}^{{\beta}}(t,T;\mathbb{R}^{k})$ and $X_{T}^{t,\eta}\in L_{G}^{\beta
		}(\Omega_{T};\mathbb{R}^{k})$. From the Lipschitz continuity assumption of
		$\varphi,$ we can find some constant $C$ such that
		\[
		\mathbb{\hat{E}}[|\varphi(X_{T}^{t,\eta})|^{\beta}I_{\{|\varphi(X_{T}^{t,\eta
			})|>N\}}]\leq C\mathbb{\hat{E}}[(1+|X_{T}^{t,\eta}|^{\beta})I_{\{|X_{T}%
			^{t,\eta}|>{\frac{N}{C}-1}\}}]\rightarrow0,\ \text{as} \ N\rightarrow\infty,
		\]
		which together with the fact that $\varphi$ is continuous implies that
		$\varphi(X_{T}^{t,\eta})\in L_{G}^{\beta}(\Omega_{T};\mathbb{R}^{n})$. By a
		similar analysis and recalling Theorem 4.7 in \cite{HW1}, we also have
		$f^{l}(\cdot,X_{\cdot}^{t,\eta},y,z^{l}),g_{ij}^{l}(\cdot,X_{\cdot}^{t,\eta
		},y,z^{l})\in M_{G}^{\beta}(t,T)$ for each $y,z^{l}$. Now the desired result
		follows from  Theorem \ref{my17}.
	\end{proof}
	
	We also have the following regularity result on $Y^{t,\eta}$.
	
	\begin{lemma}
		\label{proA.2} Assume that $\eta$, $\eta^{\prime}\in L_{G}^{\beta}(\Omega
		_{t};\mathbb{R}^{k})$ for some $\beta\geq2$ and $(H_{3})$-$(H_{4})$ hold.
		Then  there is a constant $C$ depending on $L$, $G$, $n$, $k$ and $T$ such
		that
		\begin{equation}%
		\begin{split}
		|Y_{t}^{t,\eta}-Y_{t}^{t,\eta^{\prime}}|  &  \leq C|\eta-\eta^{\prime}|,\\
		|Y_{t}^{t,\eta}|  &  \leq C(1+|\eta|).
		\end{split}
		\nonumber
		\end{equation}
		
	\end{lemma}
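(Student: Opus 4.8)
The plan is to reduce the regularity of $Y^{t,\eta}$ at the initial time to the a priori estimate for multi-dimensional $G$-BSDEs established in Proposition \ref{Myt1} (and its Corollary \ref{Myt2}), combined with the stability estimates for the forward $G$-SDE from Lemma \ref{the1.17}. The key observation is that $Y^{t,\eta}$ and $Y^{t,\eta'}$ solve $G$-BSDE (\ref{eq4.1}) with the \emph{same} generators $f,g_{ij}$ but with terminal data $\varphi(X_T^{t,\eta})$ and $\varphi(X_T^{t,\eta'})$ respectively, and with the forward paths $X^{t,\eta}$ and $X^{t,\eta'}$ entering as coefficients through the $x$-argument.

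First I would apply Proposition \ref{Myt1} with ${}^1Y=Y^{t,\eta}$ and ${}^2Y=Y^{t,\eta'}$, taking $\alpha=1$ in the exponent is not permitted, so I would fix some $1<\alpha<\beta$ and use (\ref{Myeq4.1}) at the time $t$. Here the difference of terminal conditions is $\hat\xi=\varphi(X_T^{t,\eta})-\varphi(X_T^{t,\eta'})$, and the inhomogeneous term $\hat h_s$ measures the discrepancy between the two generators evaluated along the \emph{same} solution; since the generators $f^l,g^l_{ij}$ agree as functions but are composed with different forward processes, $\hat h_s$ is controlled by $|X_s^{t,\eta}-X_s^{t,\eta'}|$ using the Lipschitz assumption $(H_4)$ in the $x$-variable. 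This gives
\begin{equation}
|Y_t^{t,\eta}-Y_t^{t,\eta'}|^\alpha \leq C\,\mathbb{\hat{E}}_t\Big[|\varphi(X_T^{t,\eta})-\varphi(X_T^{t,\eta'})|^\alpha+\Big(\int_t^T L\,|X_s^{t,\eta}-X_s^{t,\eta'}|\,ds\Big)^\alpha\Big].\nonumber
\end{equation}
Applying the Lipschitz property of $\varphi$ to the first term, bounding the time integral by $(T-t)^\alpha \sup_{s}|X_s^{t,\eta}-X_s^{t,\eta'}|^\alpha$ via H\"older, and then invoking Lemma \ref{the1.17}(i) to estimate $\mathbb{\hat{E}}_t[\sup_{s}|X_s^{t,\eta}-X_s^{t,\eta'}|^\alpha]\leq C|\eta-\eta'|^\alpha$ yields $|Y_t^{t,\eta}-Y_t^{t,\eta'}|^\alpha\leq C|\eta-\eta'|^\alpha$, which gives the first inequality after taking $\alpha$-th roots (noting $|\eta-\eta'|$ is deterministic at time $t$ once we use that it is $\mathcal{F}_t$-measurable, so the conditional expectation passes through).

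For the linear-growth bound $|Y_t^{t,\eta}|\leq C(1+|\eta|)$ I would instead apply Corollary \ref{Myt2} (the case ${}^2Y\equiv0$) to $Y^{t,\eta}$, giving $|Y_t^{t,\eta}|^\alpha\leq C\,\mathbb{\hat{E}}_t[|\varphi(X_T^{t,\eta})|^\alpha+(\int_t^T h_s\,ds)^\alpha]$ where $h_s=|f(s,X_s^{t,\eta},0,0)|+\sum_{i,j}|g_{ij}(s,X_s^{t,\eta},0,0)|$; the Lipschitz assumption $(H_4)$ forces $h_s\leq C(1+|X_s^{t,\eta}|)$ and $|\varphi(X_T^{t,\eta})|\leq C(1+|X_T^{t,\eta}|)$, and then Lemma \ref{the1.17}(ii) supplies $\mathbb{\hat{E}}_t[\sup_s|X_s^{t,\eta}|^\alpha]\leq C(1+|\eta|^\alpha)$. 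The main technical point to watch, rather than a genuine obstacle, is the bookkeeping of the constant $C$: one must verify that the constants from Proposition \ref{Myt1}/Corollary \ref{Myt2} and from Lemma \ref{the1.17} depend only on $L,G,n,k,T$ (and the fixed auxiliary exponent $\alpha$), so that the final constant has the stated dependence. The use of the \emph{conditional} estimates in both the $G$-BSDE a priori bounds and the $G$-SDE estimates is what makes the deterministic right-hand sides $|\eta-\eta'|$ and $(1+|\eta|)$ legitimate, since $\eta$ is $\mathcal{F}_t$-measurable.
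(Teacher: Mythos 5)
Your proposal is correct and follows essentially the same route as the paper: apply the continuous-dependence estimate of Proposition \ref{Myt1} (resp. Corollary \ref{Myt2}), absorb the generator discrepancy $\hat h_s$ into $L|X_s^{t,\eta}-X_s^{t,\eta'}|$ via $(H_4)$, and conclude with the conditional $G$-SDE estimates of Lemma \ref{the1.17}. The only cosmetic difference is that the paper simply takes $\alpha=2$ (legitimate since $\beta\geq 2$), which avoids the small extra step of interpolating Lemma \ref{the1.17} down to a general exponent $1<\alpha<2$.
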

	
	\begin{proof}
		We only prove the first inequality, since the other one can be obtained in a
		similar way. From Proposition \ref{Myt1}, we can find some generic constant
		$C$ depends on $L$, $G$, $n$ and $T$ that
		\begin{equation}%
		\begin{split}
		&  |Y_{t}^{t,\eta}-Y_{t}^{t,\eta^{\prime}}|^{2}\leq C\mathbb{\hat{E}}
		_{t}[|\varphi(X_{T}^{t,\eta})-\varphi(X_{T}^{t,\eta^{\prime}})|^{2}+(\int%
		_{t}^{T}\hat{h}_{s}ds)^{2}]\\
		&  \leq C\{\mathbb{\hat{E}}_{t}[|X_{T}^{t,\eta}-X_{T}^{t,\eta^{\prime}}
		|^{2}]+\int_{t}^{T}\mathbb{\hat{E}}_{t}[|X_{s}^{t,\eta}-X_{s}^{t,\eta^{\prime
		}}|^{2}]ds]\},
		\end{split}
		\label{Myeq4.5}%
		\end{equation}
		where
		\begin{equation}%
		\begin{split}
		\hat{h}_{s}  &  =|f(s,X_{s}^{t,\eta},Y_{s}^{t,\eta^{\prime}},Z_{s}
		^{t,\eta^{\prime}})-f(s,X_{s}^{t,\eta^{\prime}},Y_{s}^{t,\eta^{\prime}}
		,Z_{s}^{t,\eta^{\prime}})|\\
		&  +\sum_{i,j=1}^{d}|g_{ij}(s,X_{s}^{t,\eta},Y_{s}^{t,\eta^{\prime}}
		,Z_{s}^{t,\eta^{\prime}})-g_{ij}(s,X_{s}^{t,\eta^{\prime}},Y_{s}
		^{t,\eta^{\prime}},Z_{s}^{t,\eta^{\prime}})|.
		\end{split}
		\nonumber
		\end{equation}

		Next recalling Lemma \ref{the1.17}, we have, for $s\geq t,$
		\[
		\mathbb{\hat{E}}_{t}[|X_{s}^{t,\eta}-X_{s}^{t,\eta^{\prime}}|^{2}]\leq
		C|\eta-\eta^{\prime}|^{2}.
		\]
		Combining this with (\ref{Myeq4.5}), we get the desired result.
	\end{proof}
	
	For each $(t,x)\in\lbrack0,T]\times\mathbb{R}^{k}$, we define the following
	deterministic function
	\begin{equation}
	u(t,x):=Y_{t}^{t,x},\ \ \text{for each}\ (t,x)\in\lbrack0,T]\times
	\mathbb{R}^{k}. \label{Myeq5.1}%
	\end{equation}
	Then it follow from Lemma \ref{proA.2} that $u$ satisfies
	\begin{equation}
	|u(t,x)-u(t,x^{\prime})|\leq C|x-x^{\prime}|,\ \text{\ for all } x,x^{\prime
	}\in\mathbb{R}^{k}. \label{eq4.2}%
	\end{equation}
	In the following, we will show that $u$ is the viscosity solution of the
	following system of parabolic PDEs:
	\begin{equation}%
	\begin{cases}
	\partial_{t}u^{l}(t,x)+F^{l}(D_{x}^{2}u^{l},D_{x}u^{l},u,x,t)=0, &
	(t,x)\in(0,T)\times\mathbb{R}^{k},\ \\
	u^{l}(T,x)=\varphi^{l}(x), & x\in\mathbb{R}^{k};\ 1\leq l\leq n,
	\end{cases}
	\label{eq1.7}%
	\end{equation}
	where
	\begin{equation}%
	\begin{split}
	F^{l}(A,p,r,x,t)  &  :=G(\sigma^{T}(t,x)A\sigma(t,x)+2[\langle p,h_{ij}%
	(t,x)\rangle]_{i,j=1}^{d}+2[g_{ij}^{l}(t,x,r,\sigma^{T}(t,x)p)]_{i,j=1}%
	^{d})+\langle b(t,x),p\rangle\\
	&  +f^{l}(t,x,r,\sigma^{T}(t,x)p),\text{ for }(A,p,r,x,t)\in\mathbb{S}%
	(k)\times\mathbb{R}^{k}\times\mathbb{R}^{n}\times\mathbb{R}^{k}\times
	\lbrack0,T].
	\end{split}
	\nonumber
	\end{equation}

	Let us first give the definition of viscosity solutions of the system
	(\ref{eq1.7}).
	
	\begin{definition}
		Let $u\in C([0,T]\times\mathbb{R}^{k};\mathbb{R}^{n})$. $u$ is said to be a
		viscosity sub-solution of \eqref{eq1.7} if for each $1\leq l\leq n$,
		$u^{l}(T,x)\leq\varphi^{l}(x)$ for $x\in\mathbb{R}^{k}$, and for any
		$(t,x)\in(0,T)\times\mathbb{R}^{k}$, $\psi\in C^{1,2}((0,T)\times
		\mathbb{R}^{k})$ such that $\psi\geq u^{l} ,\psi(t,x)=u^{l}(t,x),$ we have
		\[
		\partial_{t}\psi(t,x)+F^{l}(D_{x}^{2}\psi,D_{x}\psi,u(x,t),x,t)\geq0.
		\]
		$u$ is is said to be a viscosity super-solution of \eqref{eq1.7} if for each
		$1\leq l\leq n$, $u^{l}(T,x)\geq\varphi^{l}(x)$ for $x\in\mathbb{R}^{k},$ and
		for any $(t,x)\in(0,T)\times\mathbb{R}^{k}$, $\psi\in C^{1,2}((0,T)\times
		\mathbb{R}^{k})$ such that $\psi\leq u^{l} ,\psi(t,x)=u^{l}(t,x),$ we have
		\[
		\partial_{t}\psi(t,x)+F^{l}(D_{x}^{2}\psi,D_{x}\psi,u(x,t),x,t)\leq0.
		\]
		If $u$ is both a viscosity sub-solution and a super-solution, we call $u$ a
		viscosity solution of \eqref{eq1.7}.
	\end{definition}
	
	Now we derive the following regularity result on $u$.
	
	\begin{proposition}
		\label{the1.20} The function $u(t,x):[0,T]\times\mathbb{R}^{k}\rightarrow
		\mathbb{R}^{n}$ is continuous.
	\end{proposition}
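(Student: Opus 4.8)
The plan is to use the Lipschitz estimate \eqref{eq4.2} to reduce continuity of $u$ to continuity in the time variable alone, and then to estimate the time increment by combining the a priori estimate of Proposition \ref{Myt1} with the forward flow estimates of Lemma \ref{the1.17}. Since \eqref{eq4.2} gives
\[
|u(t,x)-u(t_{0},x_{0})|\leq C|x-x_{0}|+|u(t,x_{0})-u(t_{0},x_{0})|,
\]
it suffices to show that for each fixed $x$ the map $t\mapsto u(t,x)$ is continuous, with a local modulus that may depend on $|x|$. Throughout I would work with the exponent $2$, which is admissible in Proposition \ref{Myt1} and in Lemma \ref{the1.17} because $\beta\geq 2$.

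Fix $x$ and take $t<t'$. The first key step is to compare the random value $Y_{t'}^{t,x}$ with the deterministic number $u(t',x)=Y_{t'}^{t',x}$. Viewed on $[t',T]$, the triples $(Y^{t,x},Z^{t,x},K^{t,x})$ and $(Y^{t',x},Z^{t',x},K^{t',x})$ solve the $G$-BSDE \eqref{eq4.1} with the same structure but with forward processes $X^{t,x}$ and $X^{t',x}$ respectively. Applying Proposition \ref{Myt1} on $[t',T]$ and bounding the generator discrepancy by $C|X_{s}^{t,x}-X_{s}^{t',x}|$ via $(H_{4})$, I would obtain
\[
|Y_{t'}^{t,x}-u(t',x)|^{2}\leq C\,\mathbb{\hat{E}}_{t'}\Big[|X_{T}^{t,x}-X_{T}^{t',x}|^{2}+\big(\int_{t'}^{T}|X_{s}^{t,x}-X_{s}^{t',x}|\,ds\big)^{2}\Big].
\]
By uniqueness of the $G$-SDE \eqref{SDE} one has $X_{s}^{t,x}=X_{s}^{t',X_{t'}^{t,x}}$ for $s\geq t'$, so Lemma \ref{the1.17}(i), applied with the random initial datum $X_{t'}^{t,x}\in L_{G}^{\beta}(\Omega_{t'};\mathbb{R}^{k})$ and the constant $x$, yields $|Y_{t'}^{t,x}-u(t',x)|^{2}\leq C|X_{t'}^{t,x}-x|^{2}$.

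The second step transfers this to the interval $[t,t']$. There $(Y^{t,x},Z^{t,x},K^{t,x})$ solves the $G$-BSDE with terminal value $Y_{t'}^{t,x}$ at time $t'$, while the constant process $\bar{Y}\equiv u(t',x)$, $\bar{Z}\equiv 0$, $\bar{K}\equiv 0$ solves the same type of equation with zero generators and terminal value $u(t',x)$. Applying Proposition \ref{Myt1} once more on $[t,t']$ gives
\[
|u(t,x)-u(t',x)|^{2}=|Y_{t}^{t,x}-u(t',x)|^{2}\leq C\,\mathbb{\hat{E}}_{t}\Big[|Y_{t'}^{t,x}-u(t',x)|^{2}+\big(\int_{t}^{t'}\hat{h}_{s}\,ds\big)^{2}\Big],
\]
where $\hat{h}_{s}=|f(s,X_{s}^{t,x},u(t',x),0)|+\sum_{i,j}|g_{ij}(s,X_{s}^{t,x},u(t',x),0)|$. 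By $(H_{4})$, the growth bound $|u(t',x)|\leq C(1+|x|)$ of Lemma \ref{proA.2} and Lemma \ref{the1.17}(ii), one has $\hat{h}_{s}\leq C(1+|X_{s}^{t,x}|+|x|)$, so H\"older's inequality controls the second term by $C(1+|x|^{2})(t'-t)^{2}$. For the first term I would combine the previous step with Lemma \ref{the1.17}(iii) to get $\mathbb{\hat{E}}_{t}[|Y_{t'}^{t,x}-u(t',x)|^{2}]\leq C\,\mathbb{\hat{E}}_{t}[|X_{t'}^{t,x}-x|^{2}]\leq C(1+|x|^{2})(t'-t)$. This produces the local modulus $|u(t,x)-u(t',x)|\leq C(1+|x|)\,|t'-t|^{1/2}$, which together with the first paragraph gives joint continuity of $u$.

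The main obstacle is the first step, namely linking the random terminal value $Y_{t'}^{t,x}$ to the deterministic function value $u(t',x)$. This relies on the cocycle identity $X_{s}^{t,x}=X_{s}^{t',X_{t'}^{t,x}}$ for the forward $G$-SDE (a consequence of its uniqueness) so that Lemma \ref{the1.17}(i) may be used with a random initial datum; care is also needed to ensure that the conditional $G$-expectation manipulations (the subadditive Fubini-type exchange $\mathbb{\hat{E}}_{t}[\int\cdot\,ds]\leq\int\mathbb{\hat{E}}_{t}[\cdot]\,ds$ and the conditional H\"older inequality) are legitimate in the sublinear setting. The remaining estimates are routine applications of the stated a priori bounds.
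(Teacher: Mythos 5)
Your argument is correct and arrives at the same modulus $|u(t,x)-u(t',x)|\le C(1+|x|)(t'-t)^{1/2}$ as the paper, but via a genuinely different decomposition of the time increment. The paper extends both solutions to all of $[0,T]$ by constants, truncates the generators with indicator functions, and applies the stability estimate of Proposition \ref{Myt1} once on the whole interval; the resulting term $I_2$ involves $f^l(r,X_r^{t,x},Y_r^{t,x},Z_r^{t,x;l})$ along the actual solution, which forces the paper to first establish the moment bounds \eqref{Myeq5.3} on $\sup_s|Y_s^{t,x}|$ (via Proposition \ref{Myt4.2}) and \eqref{Myeq5.4} on $\int|Z_s^{t,x}|^2ds$ (via Proposition 3.5 of \cite{HJPS}). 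You instead split at $t'$: on $[t',T]$ you compare the BSDEs driven by $X^{t,x}$ and $X^{t',x}$ using the cocycle identity $X_s^{t,x}=X_s^{t',X_{t'}^{t,x}}$ and Lemma \ref{the1.17}(i) --- exactly the mechanism behind the paper's \eqref{eq4.5} --- and on $[t,t']$ you compare with the constant solution $(\bar Y,\bar Z,\bar K)=(u(t',x),0,0)$ with zero generator. Since the reference solution in that second comparison has $\bar Z=0$, your discrepancy $\hat h_s$ is evaluated at $z=0$ and never involves $Z^{t,x}$, so you bypass the $Z$-estimate \eqref{Myeq5.4} entirely and only need the linear growth $|u(t',x)|\le C(1+|x|)$ from Lemma \ref{proA.2}. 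Two points worth making explicit, neither of which is a gap: the bound $\hat h_s\le C(1+|X_s^{t,x}|+|x|)$ uses, besides $(H_4)$, the boundedness of $s\mapsto f^l(s,0,0,0)$ and $g_{ij}^l(s,0,0,0)$ on the compact interval $[0,T]$ (the paper implicitly uses the same fact in its estimate of $I_2$); and Proposition \ref{Myt1} is invoked on the subintervals $[t',T]$ and $[t,t']$, which is legitimate because the underlying Proposition \ref{pro3.5} is stated for terminal times $T\le S$.
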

	
	\begin{proof}
		For symbol simplicity, we may assume $g_{ij}\equiv0.$ For any $t\geq0,$\ we
		define
		\[
		Y_{s}^{t,x}:=Y_{t}^{t,x},\ X_{s}^{t,x}:=X_{t}^{t,x}=x,\ K_{s} ^{t,x}%
		:=0,\ Z_{s}^{t,x}:=0,\quad\text{for each}\ s\in\lbrack0,t].
		\]
		Then $Y^{t,x}$ is the solution of the following $G$-BSDE on the interval
		$[0,T]$:
		\begin{equation}
		Y_{s}^{t,x;l}=\varphi(X_{T}^{t,x;l})+\int_{s}^{T}\tilde{f}^{l}(r,X_{r}
		^{t,x},Y_{r}^{t,x},Z_{r}^{t,x;l})dr-\int_{s}^{T}Z_{r}^{t,x;l}dB_{r}
		-(K_{T}^{t,x;l}-K_{s}^{t,x;l}),\ 1\leq l\leq n, \label{Myeq5.6}%
		\end{equation}
		where $\tilde{f}^{l}(s,X_{s}^{t,x},Y_{s}^{t,x},Z_{s}^{t,x;l})=I_{[0,t]}
		(s)f^{l}(s,X_{s}^{t,x},Y_{s}^{t,x},Z_{s}^{t,x;l}).$ For $0\leq t\leq
		t^{\prime}\leq T,$ applying Proposition \ref{Myt1}, we have
		\begin{equation}%
		\begin{split}
		&  |u(t,x)-u(t^{\prime},x)|^{2}=|Y_{0}^{t,x}-Y_{0}^{t^{\prime},x}|^{2}\\
		&  \leq C\mathbb{\hat{E}}[|\varphi(X_{T}^{t,x})-\varphi(X_{T}^{t^{\prime}
			,x})|^{2}]\\
		& +C\mathbb{\hat{E}}[|\int_{0}^{T}I_{[0,t]}(r)f^{l}(r,X_{r} ^{t,x},Y_{r}%
		^{t,x},Z_{r}^{t,x})dr-\int_{0}^{T}I_{[0,t^{\prime}]} (r)f^{l}(r,X_{r}%
		^{t,x},Y_{r}^{t,x},Z_{r}^{t,x})dr|^{2}]\\
		&  +C\mathbb{\hat{E}}[|\int_{0}^{T}I_{[0,t^{\prime}]}(r)f^{l}(r,X_{r}
		^{t,x},Y_{r}^{t,x},Z_{r}^{t,x})dr-\int_{0}^{T}I_{[0,t^{\prime}]}
		(r)f^{l}(r,X_{r}^{t^{\prime},x},Y_{r}^{t,x},Z_{r}^{t,x})dr|^{2}]\\
		&  =:I_{1}+I_{2}+I_{3.}%
		\end{split}
		\nonumber
		\end{equation}

		We will estimate the above three terms separately. First note that the
		uniqueness theorem of $G$-SDEs implies $X_{r}^{t,x}=X_{r}^{t^{\prime
			},X_{t^{\prime}}^{t,x}},$ for $r\geq t^{\prime}.$ Then by Lemma \ref{the1.17},
		we obtain
		\begin{equation}
		\mathbb{\hat{E}[}|X_{r}^{t,x}-X_{r}^{t^{\prime},x}|^{2}]=\mathbb{\hat{E}%
			[}|X_{r}^{t^{\prime},X_{t^{\prime}}^{t,x}}-X_{r}^{t^{\prime},x}|^{2}]\leq
		C\mathbb{\hat{E}[}|X_{t^{\prime}}^{t,x}-x|^{2}]\leq C(1+|x|^{2}\mathbb{)}%
		(t^{\prime}-t),\ \text{for}\ r\geq t^{\prime}.\label{eq4.5}%
		\end{equation}
		From this we get
		\[
		I_{1}\leq C\mathbb{\hat{E}}[|X_{T}^{t,x}-X_{T}^{t^{\prime},x}|^{2}]\leq
		C(1+|x|^{2})(t^{\prime}-t).
		\]
		By Proposition \ref{Myt4.2} and Lemma \ref{the1.17}, we have
		\begin{equation}
		\mathbb{\hat{E}}[\sup_{t\leq s\leq T}|Y_{s}^{t,x}|^{2}]\leq C(1+|x|^{2}%
		).\label{Myeq5.3}%
		\end{equation}
		Applying Proposition 3.5 in \cite{HJPS} to each component of (\ref{Myeq5.6})
		and using inequality (\ref{Myeq5.3}), we then deduce that
		\begin{equation}
		\mathbb{\hat{E}}[\int_{t}^{T}|Z_{s}^{t,x;l}|^{2}ds]\leq C(\mathbb{\hat{E}%
		}[\sup_{t\leq s\leq T}|Y_{s}^{t,x;l}|^{2}]+\mathbb{\hat{E}}[\int_{t}^{T}%
		|f^{l}(s,X_{s}^{t,x},Y_{s}^{t,x},0)|^{2}ds])\leq C(1+|x|^{2}).\label{Myeq5.4}%
		\end{equation}
		In spirit of inequalities (\ref{Myeq5.3}) and (\ref{Myeq5.4}), we obtain
		\begin{equation}%
		\begin{split}
		I_{2} &  =C\mathbb{\hat{E}}[|\int_{t}^{t^{\prime}}f^{l}(r,X_{r}^{t,x}%
		,Y_{r}^{t,x},Z_{r}^{t,x;l})dr|^{2}]\leq C(\mathbb{\hat{E}}[\int_{t}^{T}%
		|f^{l}(r,X_{r}^{t,x},Y_{r}^{t,x},Z_{r}^{t,x;l})|^{2}dr])(t^{\prime}-t)\\
		&  \leq C(1+|x|^{2})(t^{\prime}-t).
		\end{split}
		\nonumber
		\end{equation}
		Applying estimate (\ref{eq4.5}) again, we have
		\begin{equation}%
		\begin{split}
		I_{3} &  \leq C\mathbb{\hat{E}}[\int_{0}^{T}I_{[0,t^{\prime}]}(r)|f^{l}%
		(r,X_{r}^{t,x},Y_{r}^{t,x},Z_{r}^{t,x})dr-f^{l}(r,X_{r}^{t^{\prime},x}%
		,Y_{r}^{t,x},Z_{r}^{t,x})|^{2}dr]\\
		&  \leq C\mathbb{\hat{E}}[\int_{0}^{T}I_{[0,t^{\prime}]}(r)|X_{r}^{t,x}%
		-X_{r}^{t^{\prime},x}|^{2}dr]\leq C(1+|x|^{2}\mathbb{)}(t^{\prime}-t).
		\end{split}
		\nonumber
		\end{equation}

		Consequently, it holds that
		\[
		|u(t,x)-u(t^{\prime},x)|\leq C(1+|x|)(t^{\prime}-t)^{1/2},
		\]
		which together with inequality (\ref{eq4.2}) indicate the desired result.
	\end{proof}
	
	We will also need the following lemma, whose proof is similar to that of
	Theorem 4.4 in \cite{HJPS1} and so is omitted.
	
	\begin{lemma}
		\label{theA.4} Assume $\eta\in L_{G}^{\beta}(\Omega_{t};\mathbb{R}^{k})$ for
		some $\beta\geq2$. Then
		\begin{equation}
		u(t,\eta)=Y_{t}^{t,\eta}. \label{Myeq5.2}%
		\end{equation}
		
	\end{lemma}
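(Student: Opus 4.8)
The plan is to establish (\ref{Myeq5.2}) first for random variables taking finitely many values and then to pass to a general $\eta\in L_{G}^{\beta}(\Omega_{t};\mathbb{R}^{k})$ by approximation, exploiting that both maps $\eta\mapsto u(t,\eta)$ and $\eta\mapsto Y_{t}^{t,\eta}$ are Lipschitz. Since $u(t,\cdot):\mathbb{R}^{k}\rightarrow\mathbb{R}^{n}$ is a deterministic function, for a step random variable $\eta=\sum_{i=1}^{N}x_{i}\mathbf{1}_{A_{i}}$ with $x_{i}\in\mathbb{R}^{k}$ and $\{A_{i}\}_{i=1}^{N}\subset\mathcal{B}(\Omega_{t})$ a partition of $\Omega_{t}$ we have $u(t,\eta)=\sum_{i=1}^{N}u(t,x_{i})\mathbf{1}_{A_{i}}$ directly from definition (\ref{Myeq5.1}), so it suffices to prove the pasting identity $Y_{t}^{t,\eta}=\sum_{i=1}^{N}Y_{t}^{t,x_{i}}\mathbf{1}_{A_{i}}$ quasi-surely.

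First I would prove the locality relations
\[
\mathbf{1}_{A_{i}}X_{s}^{t,\eta}=\mathbf{1}_{A_{i}}X_{s}^{t,x_{i}},\qquad \mathbf{1}_{A_{i}}Y_{s}^{t,\eta}=\mathbf{1}_{A_{i}}Y_{s}^{t,x_{i}},\qquad t\leq s\leq T,\ 1\leq i\leq N.
\]
The mechanism is that $\mathbf{1}_{A_{i}}$ is $\mathcal{B}(\Omega_{t})$-measurable and nonnegative, hence it commutes with the $ds$, $d\langle B^{i},B^{j}\rangle_{s}$ and $dB_{s}$ integrals over $[t,T]$ and may be pulled out of the conditional $G$-expectations $\mathbb{\hat{E}}_{s}$ for $s\geq t$. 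Multiplying the $G$-SDE (\ref{SDE}) for $X^{t,\eta}$ by $\mathbf{1}_{A_{i}}$ and using the Lipschitz structure $(H_{3})$, one checks that $\mathbf{1}_{A_{i}}X^{t,\eta}$ solves the same $G$-SDE as $\mathbf{1}_{A_{i}}X^{t,x_{i}}$ with initial datum $\mathbf{1}_{A_{i}}x_{i}$ at time $t$, so the uniqueness of $M_{G}^{\beta}$-solutions gives the first identity; feeding this into the generators, multiplying the $G$-BSDE (\ref{eq4.1}) by $\mathbf{1}_{A_{i}}$ and invoking the uniqueness part of Theorem \ref{Myth5.1} gives the second. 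Summing over $i$ and evaluating at $s=t$ yields $Y_{t}^{t,\eta}=\sum_{i}\mathbf{1}_{A_{i}}Y_{t}^{t,x_{i}}=\sum_{i}\mathbf{1}_{A_{i}}u(t,x_{i})=u(t,\eta)$, which settles the step-function case.

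Finally, I would approximate a general $\eta$ by simple random variables $\eta_{m}$ obtained from a partition of $\mathbb{R}^{k}$ into small cubes, setting $\eta_{m}=\sum_{i}x_{i}\mathbf{1}_{\{\eta\in Q_{i}\}}$; choosing the cube boundaries so as to avoid the capacity charge keeps each $\eta_{m}$ bounded and quasi-continuous, hence in $L_{G}^{\beta}(\Omega_{t};\mathbb{R}^{k})$, with $|\eta-\eta_{m}|\leq\mathrm{mesh}\to0$ quasi-surely. The Lipschitz bound (\ref{eq4.2}) lifted to random arguments gives $|u(t,\eta)-u(t,\eta_{m})|\leq C|\eta-\eta_{m}|$ q.s., while the continuous-dependence estimate of Lemma \ref{proA.2} gives $|Y_{t}^{t,\eta}-Y_{t}^{t,\eta_{m}}|\leq C|\eta-\eta_{m}|$ q.s., so both sides converge as $m\to\infty$; since they coincide for each $\eta_{m}$ by the previous step, they coincide for $\eta$. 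I expect the main obstacle to be the locality step in the nonlinear setting: because $\mathbb{\hat{E}}$ is only sublinear one cannot condition on the events $A_{i}$ as in the classical case, so the pasting identities must be obtained through the multiply-by-indicator and uniqueness route, which requires carefully justifying that a nonnegative $\mathcal{B}(\Omega_{t})$-measurable factor genuinely commutes with the $G$-It\^{o} integral and with $\mathbb{\hat{E}}_{s}$, together with the boundary-avoidance construction ensuring the approximating step functions remain admissible.
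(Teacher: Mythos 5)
Your proposal is correct and is precisely the argument the paper points to: the proof of Lemma \ref{theA.4} is omitted with a reference to Theorem 4.4 of \cite{HJPS1}, whose proof is exactly your two-step scheme of pasting $X^{t,x_i}$ and $Y^{t,x_i}$ over a finite $\mathcal{B}(\Omega_{t})$-partition via the uniqueness of solutions, followed by approximation of a general $\eta$ using the Lipschitz estimates of Lemma \ref{proA.2} and \eqref{eq4.2}. No substantive difference from the cited proof.
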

	
	We now state the main result of this section.
	
	\begin{theorem}
		\label{Myth4.3}  Assume $(H_{3})$-$(H_{4})$ hold. Then $u$ is the unique
		viscosity  solution of the backward parabolic equation system (\ref{eq1.7}).
	\end{theorem}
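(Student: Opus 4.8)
The plan is to separate the statement into an existence half (that $u$ is a viscosity solution) and a uniqueness half, and to reduce each to the scalar $G$-BSDE/PDE theory of \cite{HJPS1} by exploiting the diagonal structure in $z$. The engine for existence is a flow (dynamic programming) identity: by the uniqueness in Theorem \ref{Myth5.1} together with the flow property $X_s^{t,x}=X_s^{r,X_r^{t,x}}$ of the $G$-SDE, one gets $Y_s^{t,x}=Y_s^{r,X_r^{t,x}}$ for $s\ge r\ge t$, and evaluating at $s=r$ and invoking Lemma \ref{theA.4} gives $Y_r^{t,x}=u(r,X_r^{t,x})$ for every $r\in[t,T]$. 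I would then freeze the off-diagonal components: for fixed $l$ set
\[
\hat f^{l}(t,x,y,z):=f^{l}\big(t,x,(u^{1}(t,x),\dots,u^{l-1}(t,x),y,u^{l+1}(t,x),\dots,u^{n}(t,x)),z\big),
\]
and define $\hat g^{l}_{ij}$ analogously. Since $u$ is Lipschitz in $x$ by Lemma \ref{proA.2} and $f^{l},g^{l}_{ij}$ are Lipschitz by $(H_{4})$, the frozen coefficients are deterministic and Lipschitz in $(x,y,z)$, hence fall within the scope of \cite{HJPS1}.

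Using $Y_r^{t,x}=u(r,X_r^{t,x})$ one checks that the $l$-th component $(Y^{t,\cdot;l},Z^{t,\cdot;l},K^{t,\cdot;l})$ of the solution of \eqref{eq4.1} is exactly the solution of the scalar Markovian $G$-BSDE driven by $X^{t,x}$ with generators $\hat f^{l},\hat g^{l}_{ij}$ and terminal value $\varphi^{l}(X_T^{t,x})$, because $\hat f^{l}(r,X_r,Y_r^{l},Z_r^{l})=f^{l}(r,X_r,Y_r,Z_r^{l})$ once $Y_r^{j}=u^{j}(r,X_r)$ for $j\ne l$. The one-dimensional Feynman-Kac result of \cite{HJPS1} then shows $u^{l}(t,x)=Y_t^{t,x;l}$ is the viscosity solution of $\partial_t u^{l}+\hat F^{l}(D_x^2u^{l},D_xu^{l},u^{l},x,t)=0$, where $\hat F^{l}$ is assembled from $\hat f^{l},\hat g^{l}_{ij}$ exactly as $F^{l}$ is from $f^{l},g^{l}_{ij}$. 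The decisive observation is that at any contact point the test value in the $l$-th slot is $u^{l}(t,x)$, so the frozen nonlinearity collapses to the true one, $\hat F^{l}(A,p,u^{l}(t,x),x,t)=F^{l}(A,p,u(t,x),x,t)$. Thus every scalar sub-/super-solution inequality for $u^{l}$ is precisely the corresponding inequality in the definition of viscosity solution of \eqref{eq1.7}; ranging over $l=1,\dots,n$ and using continuity from Proposition \ref{the1.20} shows that $u$ is a viscosity solution.

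For uniqueness I would \emph{not} compare two solutions of the coupled system directly: the dependence of $f^{l},g^{l}_{ij}$ on the off-diagonal entries of $y$ is only Lipschitz and not monotone (it is exactly condition (i) of Theorem \ref{Myth4.1} that is unavailable here), so a naive maximum principle over the components does not close. Instead I freeze again at the \emph{known} Lipschitz function $u$ and run a Gronwall loop. Let $v$ be any viscosity solution of at most polynomial growth and put $\phi(t):=\sup_{x}|u(t,x)-v(t,x)|$. For each $l$, $v^{l}$ solves the scalar equation frozen at $v$, which I rewrite as a viscosity sub- and super-solution of the equation frozen at $u$ with a source term; by $(H_{4})$ and the Lipschitz continuity of $G$, this source is bounded, uniformly in the jet variables $(A,p,w)$, by $CL\,|u(t,x)-v(t,x)|\le CL\,\phi(t)$, since $u$ and $v$ differ only in the off-diagonal slots. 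Applying the scalar comparison principle of \cite{HJPS1} for the $u$-frozen equation—legitimate precisely because its coefficients $\hat f^{l},\hat g^{l}_{ij}$ are Lipschitz in $x$—with this source (and with matching terminal data $\varphi^{l}$) yields $\sup_x|u^{l}(t,\cdot)-v^{l}(t,\cdot)|\le CL\int_t^T\phi(s)\,ds$. Taking the maximum over $l$ gives $\phi(t)\le CL\int_t^T\phi(s)\,ds$, and Gronwall's inequality forces $\phi\equiv0$, i.e. $v=u$.

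I expect the real difficulty to sit in the uniqueness step, and specifically in the non-monotone coupling. The whole point is that the comparison must be carried out in the equation frozen at the Lipschitz solution $u$—where the structure condition for the Crandall-Ishii doubling of variables is in force—rather than in the equation frozen at the competitor $v$, whose $x$-regularity is a priori unknown. Pushing the source term through the scalar comparison on the unbounded domain $\mathbb{R}^{k}$ (polynomial growth, penalisation at infinity, plus the $G$-specific handling of the second-order term as in \cite{HJPS1}) and checking that its bound is genuinely independent of $(A,p,w)$—so that the Gronwall loop really closes—is the main technical obstacle. The existence half, by contrast, is essentially a bookkeeping reduction to \cite{HJPS1} once the flow identity $Y_r^{t,x}=u(r,X_r^{t,x})$ and the contact-point collapse $\hat F^{l}\to F^{l}$ are in hand.
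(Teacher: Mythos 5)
Your existence half follows the paper's strategy in all essentials: both proofs rest on the Markovian identity $Y_r^{t,x}=u(r,X_r^{t,x})$ and on freezing the off-diagonal components so that the $l$-th line becomes a scalar $G$-BSDE, and both exploit the contact-point collapse of the frozen operator onto $F^l$. The only real divergence is the last step: you delegate it to the one-dimensional Feynman--Kac theorem of \cite{HJPS1}, whereas the paper carries it out by hand --- shifting by the test function via $G$-It\^{o}, approximating the auxiliary scalar $G$-BSDE by the deterministic ODE (\ref{Myeq4.10}), and invoking the one-dimensional comparison theorem to get $\hat Y_t\geq 0$. This is not a cosmetic difference: your frozen generator $\hat f^{l}(t,x,y,z)=f^{l}(t,x,(\dots,u^{j}(t,x),\dots),z)$ carries an explicit time dependence inherited from $u$, which is only continuous (H\"older-$1/2$ by Proposition \ref{the1.20}) in $t$, while the Feynman--Kac theorem of \cite{HJPS1} is formulated for autonomous coefficients. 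The paper's inline ODE argument is exactly what covers this case with no extra regularity in $t$; if you cite \cite{HJPS1} instead, you must either check that its proof tolerates this time dependence or reprove that step --- which lands you back on the paper's argument. Subject to that caveat, your reduction is correct, and the verification that $(Y^{t,x;l},Z^{t,x;l},K^{t,x;l})$ solves the frozen scalar equation (via uniqueness of the scalar $G$-BSDE) is sound.

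For uniqueness the paper offers only a citation to Theorem 5.1 of \cite{PPR}, and your Gronwall loop is the right general idea, but as written it has a genuine defect: $\phi(t):=\sup_{x}|u(t,x)-v(t,x)|$ is in general $+\infty$, since $u$ has linear growth and $v$ is only assumed polynomially growing, so the inequality $\phi(t)\leq CL\int_t^T\phi(s)\,ds$ is vacuous. One must work with a weighted supremum such as $\sup_x|u(t,x)-v(t,x)|(1+|x|^{2p})^{-1}$ (or localize and penalize at infinity), and then the weight does not pass transparently through the fully nonlinear operator $G$; this is where the actual work of \cite{PPR} (adapted to the $G$-setting) lives. A second, smaller gap: when you turn $v^{l}$ into a sub-/super-solution of the $u$-frozen equation with source $CL\phi(t)$ and compare with $u^{l}$, the zeroth-order Lipschitz dependence of $\hat f^{l},\hat g^{l}_{ij}$ on the diagonal slot produces an extra term $CL\rho(t)$ in the supersolution check for $u^{l}+\rho(t)$, so the plain integral inequality must be replaced by one with an exponential weight $e^{\lambda(T-t)}$ before Gronwall closes. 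You flag the penalization issue as a ``technical obstacle,'' but it is the substance of the uniqueness proof rather than a detail, so the uniqueness half should be regarded as incomplete as it stands.
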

	
	\begin{proof}
		We first prove that $u$ is a viscosity solution of (\ref{eq1.7}).  We only
		show that $u$ is a viscosity sub-solution, and the proof of  sup-solution is similar.
		
		Let $1\leq l\leq n$ be given. Assume $\psi\in C^{1,2}((0,T)\times
		\mathbb{R}^{k})$ such that
		$\psi\geq u^{l}$, $\psi(t,x)=u^{l}(t,x)$ for some $(t,x)\in(0,T)\times
		\mathbb{R}^{k}$. Since $u$ is Lipschitz continuous and from a standard
		argument, we can assume that $\psi$ is continuously differential  up to the
		third order and the corresponding derivatives of order from 1 to 3 are
		bounded. We need to prove that
		\begin{equation}
		\partial_{t}\psi(t,x)+F^{l}(D_{x}^{2}\psi(x,t),D_{x}\psi(x,t),u(x,t),x,t)\geq
		0. \label{Myeq5.11}%
		\end{equation}

		For $t^{\prime}\geq t$, let $(\tilde{Y},\tilde{Z},\tilde{K})$ be the solution
		the following one-dimensional $G$-BSDE on $[t,t^{\prime}]:$
		\begin{equation}
		\label{Myeq4.9}%
		\begin{split}
		\tilde{Y}_{s}  &  =\psi(t^{\prime},X_{t^{\prime}}^{t,x})+\int_{s}^{t^{\prime}
		}f^{l}(r,X_{r}^{t,x},Y_{r}^{t,x;1},\cdots,Y_{r}^{t,x;l-1},\tilde{Y}_{r}
		,Y_{r}^{t,x;l+1},\cdots,Y_{r}^{t,x;n},\tilde{Z}_{r})dr\\
		&  +\int_{s}^{t^{\prime}}g_{ij}^{l}(r,X_{r}^{t,x},Y_{r}^{t,x;1},\cdots
		,Y_{r}^{t,x;l-1},\tilde{Y}_{r},Y_{r}^{t,x;l+1},\cdots,Y_{r}^{t,x;n},\tilde
		{Z}_{r})d\langle B^{i},B^{j}\rangle_{r}\\
		&  -\int_{s}^{t^{\prime}}\tilde{Z}_{r}dB_{r}-(\tilde{K}_{t^{\prime}}-\tilde
		{K}_{s}).
		\end{split}
		\end{equation}
		We first note that, from the $G$-It\^{o}'s formula (see Section III of
		\cite{P10}),
		\begin{equation}%
		\begin{split}
		{{\psi}}(t^{\prime},X_{t^{\prime}}^{t,x})  &  ={{\psi}}(s,X_{s}^{t,x}
		)+\int_{s}^{t^{\prime}}\partial_{t}\psi(r,X_{r}^{t,x})dr+\int_{s}^{t^{\prime}
		}\langle D_{x}\psi(r,X_{r}^{t,x}),b(r,X_{r}^{t,x})\rangle dr\\
		&  +\int_{s}^{t^{\prime}}\langle D_{x}\psi(r,X_{r}^{t,x}),h_{ij}
		(r,X_{r}^{t,x})\rangle d\langle B^{i},B^{j}\rangle_{r}+\int_{s}^{t^{\prime}
		}\langle\sigma^{T}(r,X_{r}^{t,x})D_{x}\psi(r,X_{r}^{t,x}),dB_{r}\rangle\\
		&  +\frac{1}{2}\int_{s}^{t^{\prime}}(\sigma^{T}(r,X_{r}^{t,x})D^{2}_{x}%
		\psi(r,X_{r}^{t,x}) \sigma(r,X_{r}^{t,x}))_{ij}d\langle B^{i},B^{j}\rangle
		_{r},\quad s\in\lbrack t,t^{\prime}].
		\end{split}
		\nonumber
		\end{equation}
		Moreover, by the uniqueness theorem of multi-dimensional $G$-BSDEs and Lemma
		\ref{theA.4}, we have
		\[
		Y^{t,x}_{s}=Y^{s,X^{t,x}_{s}}_{s}=u(s,X^{t,x}_{s}),\ \ \text{for}\ s\geq t.
		\]
		Thus, if we denote $\hat{Y}_{s}=\tilde{Y}_{s}-\psi(s,X_{s}^{t,x})$, $\hat
		{Z}_{s}=\tilde{Z}_{s}-(\sigma^{T}D_{x}\psi)(s,X_{s}^{t,x})$, $\hat{K}
		_{s}=\tilde{K}_{s}$. Then $(\hat{Y},\hat{Z},\hat{K})$ is the solution of the
		following $G$-BSDE on $[t,t^{\prime}]:$
		\begin{equation}
		\label{Myeq4.12}%
		\begin{split}
		\hat{Y}_{s}=  &  \int_{s}^{t^{\prime}}e^{l}(r,X_{r}^{t,x},\hat{Y}_{r},\hat
		{Z}_{r})dr+\int_{s}^{t^{\prime}}k_{ij}^{l}(r,X_{r}^{t,x},\hat{Y}_{r},\hat
		{Z}_{r})d\langle B^{i},B^{j}\rangle_{r}\\
		&  -\int_{s}^{t^{\prime}}\hat{Z}_{r}dB_{r}-(\hat{K}_{t^{\prime}}-\hat{K}
		_{s}){.}%
		\end{split}
		\end{equation}
		where
		\begin{equation}%
		\begin{split}
		e^{l}(s,x,y,z)  &  =f^{l}(s,x,u^{1}(s,x),\cdots,u^{l-1}(s,x),y+\psi
		(s,x),u^{l+1}(s,x),\cdots,u^{n}(s,x),z^{l}+(\sigma^{T}D_{x}\psi)(s,x))\\
		&  +\partial_{t}\psi(s,x)+\langle b(s,x),D_{x}\psi(s,x)\rangle,
		\end{split}
		\nonumber
		\end{equation}
		\begin{equation}%
		\begin{split}
		k_{ij}^{l}(s,x,y,z)  &  =g_{ij}^{l}(s,x,u^{1}(s,x),\cdots,u^{l-1}
		(s,x),y+\psi(s,x),u^{l+1}(s,x),\cdots,u^{n}(s,x),z^{l}+(\sigma^{T}D_{x}
		\psi)(s,x))\\
		&  +\langle D_{x}\psi(s,x),h_{ij}(s,x)\rangle+\frac{1}{2}(\sigma^{T}D_{x}
		^{2}\psi\sigma)_{ij}(s,x).
		\end{split}
		\nonumber
		\end{equation}

		We now consider a deterministic approximation of the above $G$-BSDE
		(\ref{Myeq4.12}).  Let $\ (\bar{Y} ,\bar{Z},\bar{K})$ be the solution of the
		following $G$-BSDE on $[t,t^{\prime}]:$
		\begin{equation}
		\bar{Y}_{s}=\int_{s}^{t^{\prime}}e^{l}(r,x,\bar{Y}_{r},\bar{Z}_{r})dr+\int%
		_{s}^{t^{\prime}}k_{ij}^{l}(r,x,\bar{Y}_{r},\bar{Z}_{r})d\langle B^{i}
		,B^{j}\rangle_{r}-\int_{s}^{t^{\prime}}\bar{Z}_{r}dB_{r}-(\bar{K}_{t^{\prime}%
		}-\bar{K}_{s}). \label{Myeq5.8}%
		\end{equation}
		We first take $\bar{Z}=0$. Note that
		\begin{equation}%
		\begin{split}
		\bar{Y}_{s}  &  =\int_{s}^{t^{\prime}}[e^{l}(r,x,\bar{Y}_{r},0)+2G([k_{ij}
		^{l}(r,x,\bar{Y}_{r},0)]_{i,j=1}^{d})]dr\\
		&  +\int_{s}^{t^{\prime}}k_{ij}^{l}(r,x,\bar{Y}_{r},0)d\langle B^{i}
		,B^{j}\rangle_{r}-\int_{s}^{t^{\prime}}2G([k_{ij}^{l}(r,x,\bar{Y}
		_{r},0)]_{i,j=1}^{d})dr.
		\end{split}
		\nonumber
		\end{equation}
		Thus if we take $\bar{Y}_{s}$ as the solution of the following ODE:
		\begin{equation}
		\label{Myeq4.10}\bar{Y}_{s}=\int_{s}^{t^{\prime}}[e^{l}(r,x,\bar{Y}%
		_{r},0)+2G([k_{ij} ^{l}(r,x,\bar{Y}_{r},0)]_{i,j=1}^{d})]dr,
		\end{equation}
		and define
		\[
		\bar{K}_{s}:=\int_{t}^{s}k_{ij}^{l}(r,x,\bar{Y}_{r},0)d\langle B^{i}
		,B^{j}\rangle_{r}-\int_{t}^{s}2G([k_{ij}^{l}(r,x,\bar{Y}_{r},0)]_{i,j=1}
		^{d})dr,
		\]
		then it is easy to see that $(\bar{Y},0,\bar{K})\ $is the solution of
		(\ref{Myeq5.8}). Applying Proposition \ref{pro3.5} and Lemma \ref{the1.17},
		we  have
		\[
		|\hat{Y}_{t}-\bar{Y}_{t}|^{2}\leq C\mathbb{\hat{E}}_{t}[(\int_{t}^{t^{\prime}
		}\hat{h}_{r}dr)^{2}]\leq C(\int_{t}^{t^{\prime}}\mathbb{\hat{E}}_{t}[|\hat
		{h}_{r}|^{2}]dr)(t^{^{\prime}}-t)\leq C(1+|x|^{2})(t^{\prime}-t)^{3},
		\]
		where
		\[
		\hat{h}_{r}=|e^{l}(r,X_{r}^{t,x},\bar{Y}_{r},0)-e^{l}(r,x,\bar{Y}_{r}
		,0)|+\sum_{i,j=1}^{d}|k_{ij}^{l}(r,X_{r}^{t,x},\bar{Y}_{r},0)-k_{ij}
		^{l}(r,x,\bar{Y}_{r},0)|.
		\]
		Therefore,
		\begin{equation}
		|\hat{Y}_{t}-\bar{Y}_{t}|\leq C(1+|x|)(t^{\prime}-t)^{\frac{3}{2} }.
		\label{Myeq4.6}%
		\end{equation}

		Moreover, in view of the assumption on $\psi$ and applying Theorem
		\ref{the1.5} to $G$-BSDEs (\ref{Myeq4.9}) and
		\begin{equation}%
		\begin{split}
		Y_{s}^{t,x;l}  &  =u^{l}(t^{\prime},X_{t^{\prime}}^{t,x})+\int_{s}^{t^{\prime}
		}f^{l}(r,X_{r}^{t,x},Y_{r}^{t,x},Z_{r}^{t,x;l})dr+\int_{s}^{t^{\prime}}
		g_{ij}^{l}(r,X_{r}^{t,x},Y_{r}^{t,x},Z_{r}^{t,x;l})d\langle B^{i},B^{j}
		\rangle_{s}\\
		&  \ \ \ -\int_{s}^{t^{\prime}}Z_{r}^{t,x;l}dB_{r}-(K_{t^{\prime}}^{t,x;l}
		-K_{s}^{t,x;l}),\text{ }t\leq s\leq t^{\prime},
		\end{split}
		\nonumber
		\end{equation}
		we get
		\[
		\tilde{Y}_{t}\geq u^{l}(t,x).
		\]
		This implies
		\[
		\hat{Y}_{t}\geq0.
		\]
		Combining this with (\ref{Myeq4.6}), we obtain
		\[
		(t^{\prime}-t)^{-1}\bar{Y}_{t}\geq-C(1+|x|)(t^{\prime}-t)^{1/2}.
		\]
		That is,
		\begin{equation}
		\label{Myeq3.12}(t^{\prime}-t)^{-1}\int_{t}^{t^{\prime}}[e^{l}(r,x,\bar{Y}%
		_{r},0)+2G([k_{ij} ^{l}(r,x,\bar{Y}_{r},0)]_{i,j=1}^{d})]dr\geq
		-C(1+|x|)(t^{\prime}-t)^{1/2}.
		\end{equation}
		On the other hand, note that from (\ref{Myeq4.10}),
		\[
		|\bar{Y}_{s}|\leq C\int_{s}^{t^{\prime}}(1+|\bar{Y}_{r}|)dr\leq C((t^{\prime
		}-t)+\int_{s}^{t^{\prime}}|\bar{Y}_{r}|)dr,\ \ s\in[t,t^{\prime}].
		\]
		which implies, by the Gronwall's inequality,
		\[
		|\bar{Y}_{s}|\leq C(t^{\prime}-t),\ \ s\in[t,t^{\prime}].
		\]
		Therefore, letting $t^{\prime}\rightarrow t$ in (\ref{Myeq3.12}), we get
		\[
		e^{l}(t,x,0,0)+2G([k_{ij} ^{l}(r,x,0,0)]_{i,j=1}^{d})\geq0,
		\]
		which is just (\ref{Myeq5.11}) and this means that $u$ is a viscosity sub-solution.
		
		Analysis similar to the proof of Theorem 5.1 in \cite{PPR} (see also
		\cite{BBP,BH,BL}) shows that there exists at most one solution to the system
		(\ref{eq1.7}) in the class of continuous functions with polynomially growth.
		The proof is complete.
	\end{proof}

\bigskip

\noindent\textbf{Acknowledgement}: The author is grateful to Dr. Mingshang Hu
 and Dr. Falei Wang  for their fruitful discussions. The author would also like to thank the anonymous referee for the valuable
 comments and constructive suggestions which improved the presentation of this manuscript.

\end{document}